% paper template

\documentclass[10pt,reqno,final]{amsart}
%\documentclass[10pt,reqno,draft]{amsart}

% package for mathematics
\usepackage{amsmath,amssymb,amsthm}
% package for RSFS fonts in maths
\usepackage{mathrsfs}
% package for special symbols
\usepackage{pifont}
% package for multilingual support
\usepackage[english]{babel}
% package for figures
\usepackage{graphicx}
\usepackage{subfig}
% package for hyperlink
\usepackage{hyperref}
% package for layout of list
\usepackage{enumitem}
% package for showing keys in draft mode
\usepackage[notcite,notref]{showkeys}
% package for color
\usepackage{color}
% package for table
\usepackage{tabularx}
\usepackage{booktabs}
\usepackage{array}
\usepackage{makecell}
% package for multiple columns and rows
\usepackage{multirow,multicol}
% package for caption
\usepackage{caption}
% package for algorithm
\usepackage{algorithm,algorithmicx}
\usepackage{algpseudocode}
% package for using box and verbatim
\usepackage{fancybox}

\usepackage[all]{xy}

% package for bibliography support
\usepackage[numbers,sort&compress]{natbib}

% allow page breaks between multiline formulas
\allowdisplaybreaks

% layout setting
\setlength{\textwidth}{15cm}
\setlength{\textheight}{21.6cm}
\setlength{\oddsidemargin}{0.5cm}
\setlength{\evensidemargin}{0.5cm}

% command for line spacing

% command for mark footnotes
%\usepackage[symbol]{footmisc}
%\renewcommand{\thefootnote}{\fnsymbol{footnote}}

% command for equations, theorems and lemmas etc.
\theoremstyle{plain}
\newtheorem{theorem}{Theorem}[section]
\newtheorem{lemma}{Lemma}[section]
\newtheorem{corollary}{Corollary}[section]
\newtheorem{proposition}{Proposition}[section]

\theoremstyle{definition}
\newtheorem{definition}{Definition}[section]
\newtheorem{example}{Example}

\theoremstyle{remark}

% caption setting
%\captionsetup{font={small,singlespacing},labelformat={default},labelsep=period}
\captionsetup[figure]{position=bottom,skip={8pt},name={Figure}}
\captionsetup[table]{position=top,skip={4pt},name={Table}}
%\setlength{\textfloatsep}{12pt plus 2pt minus 2pt}
%\setlength{\floatsep}{10pt plus 2pt minus 2pt}
%\setlength{\intextsep}{10pt plus 2pt minus 2pt}
%\setlength{\abovecaptionskip}{2pt plus 1pt minus 1pt}
%\setlength{\belowcaptionskip}{3pt plus 1pt minus 2pt}

% number of equation, figure and table
\numberwithin{equation}{section}
\numberwithin{figure}{section}
\numberwithin{table}{section}

% graphic path
\graphicspath{{./figures/}}

% blank box for figure

% differential operator

% new command

 % imaginary

% Information for title and author
\title[Block-Decomposition for 3-Parameter Persistence Modules]{Block-Decomposition for 3-Parameter Persistence Modules}

\author[Siheng Yi]{Siheng Yi}

\thanks{${}^{*}$Siheng Yi (E-mail: \texttt{12131237@mail.sustech.edu.cn})}
\thanks{${}^{}$Department of Mathematics, Southern University of Science and Technology}

\date{\today}

\subjclass[2000]{65M60, 65M12}

\keywords{persistence modules, strong exactness, block-decomposition}

\begin{document}

\begin{abstract}
    In 2020, Cochoy and Oudot got the necessary and sufficient condition of the block-decomposition of 2-parameter persistence modules $\mathbb{R}^2 \to \textbf{Vec}_{\Bbbk}$. 
    And in 2024, Lebovici, Lerch and Oudot resolve the problem of block-decomposability for multi-parameter persistence modules. 
    Following the approach of Cochoy and Oudot's proof of block-decomposability for 2-parameter persistence modules, we rediscuss the necessary and sufficient conditions for the block decomposition of the 3-parameter persistence modules $\mathbb{R}^3 \to \textbf{Vec}_{\Bbbk}$. 
    Our most important contribution is to generalize the strong exactness of 2-parameter persistence modules to the case of 3-parameter persistence modules. 
    What's more, the generalized method allows us to understand why there is no block decomposition in general persistence modules to some extent. 
\end{abstract}

\maketitle

\section{Introduction}

1-parameter persistent homology had become an important method in topological data analysis\cite{carlsson2009topology,otter2017roadmap,rabadan2019topological} due to its special perspective, topological structures. 
As more and more researchers pay attention to this emerging feature extraction method, many scholars are considering using multi-parameter persistent homology to extract richer topological information from the data\cite{adcock2014classification,xia2015multidimensional,beltramo2021euler,biasotti2008multidimensional,carriere2020multiparameter}. 
The theoretical core of 1-parameter persistent homology is its decomposition theorem\cite{zomorodian2004computing} and the stability theorem\cite{cohen2005stability}, so for multi-parameter persistent homology, we also want to know if it has relative decomposition theorem and stability theorem. 
However, unlike 1-parameter persistent homology, there is currently no classification theorem for multi-parameter persistent homology, which is not completely characterized by some discrete descriptor.  
When abstracting persistent homology as persistence modules\cite{bubenik2014categorification}, the task of finding descriptors becomes a problem related to representation and algebra, so we hope to find some discrete complete invariants to persistence modules. 
Unfortunately, this cannot be achieved. Carlsson and Zomorodian\cite{carlsson2007theory} tell us that even a sufficiently simple class of 2-parameter persistence modules cannot be completely classified by discrete invariants. 
Therefore, some researchers have begun to consider the decomposition theorem of some special persistence modules. 
Among them, Cochoy and Oudot\cite{cochoy2020decomposition} proved that 2-parameter persistence modules $\mathbb{R}^2 \to \textbf{Vec}_{\Bbbk}$ can be decomposed into the direct sum of block modules if and only if 2-parameter persistence modules satisfy the 2-parameter strong exactness. 
Furthermore in 2024, Lebovici, Lerch, and Oudot resolve the problem of block-decomposability for multi-parameter persistence modules\cite{lebovici2024local}. 
In the paper\cite{lebovici2024local} by Lebovici et al., the authors introduced the middle exactness condition for $n$-parameter persistence modules and proved that this condition is both necessary and sufficient for block-decomposability of $n$-parameter persistence modules. 
In the special case of 3-parameter persistence modules, middle exactness is equivalent to our defined strong exactness condition.
Although Lebovici et al. addressed a more general scenario in their paper\cite{lebovici2024local}, which is the block-decomposition of any finite-dimensional persistence modules, and our work only solved the block-decomposition of 3-parameter persistence modules, I maintain that our work retains scholarly value. 
The 3-parameter strong exactness allows us to understand why there is no block decomposition in general persistence modules to some extent.

Our work can be seen as a continuation of the work of Cochoy and Oudot\cite{cochoy2020decomposition}. 
Following their proof approach, we have proved the necessary and sufficient conditions for $3$-dimensional persistence modules to be decomposed into the direct sum of block modules. 
The most important contribution of our work is to generalize 2-parameter strong exactness to the 3-dimensional case and prove that $3$-persistence modules can be decomposed as the direct sum of block modules if and only if the $3$-persistence modules satisfy $3$-dimensional strong exactness. 
Moreover, this generalization can actually be achieved in any finite-dimensional case. 
	
\begin{theorem}
    Let $\mathbb{M}$ be a pointwise finite-dimensional 3-parameter persistence module satisfying the 3-parameter strong exactness. 
    Then $\mathbb{M}$ may decompose uniquely (up to isomorphism and reordering of the terms) as a direct sum of block modules:
    $$\mathbb{M} \cong \underset{B:\text{blocks}}{\bigoplus}\mathbb{M}_B$$ in which $M_B\cong \bigoplus_{i=1}^{n_B}\Bbbk_{B}$ in which $n_B$ are determined by the counting functor $\mathcal{CF}$. 
\end{theorem}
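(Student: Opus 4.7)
The plan is to follow the program of Cochoy and Oudot \cite{cochoy2020decomposition}, with the dimension raised from two to three. The argument splits into uniqueness, existence, and identification of multiplicities. For uniqueness I would appeal to the Krull--Schmidt--Azumaya theorem: since $\mathbb{M}$ is pointwise finite-dimensional, every summand is pfd, and every indecomposable block module $\Bbbk_B$ has local endomorphism ring; consequently any two decompositions into indecomposables coincide up to isomorphism and reordering. This reduces the theorem to exhibiting \emph{some} decomposition into block modules, with multiplicities automatically forced to coincide with whatever invariant is computed by $\mathcal{CF}$.

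For existence I would first verify that every block module $\Bbbk_B$ satisfies 3-parameter strong exactness, so that the target class is closed under the hypothesis. Then, imitating the local-to-global construction of \cite{cochoy2020decomposition}, I would work on a sufficiently fine discretization of $\mathbb{R}^3$: on each elementary 3-cube, strong exactness furnishes a direct-sum decomposition of the restriction of $\mathbb{M}$ into pieces labelled by vertices, edges, faces, and the whole cube, each summand having one of the recognized block shapes (birth, death, birth--death, or one of the three band types oriented along a coordinate axis). The key lemma is that these local decompositions are compatible across shared faces of adjacent cubes --- itself a consequence of strong exactness on the merged cube --- so that a transfinite induction over refinements produces a global decomposition of $\mathbb{M}$. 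The multiplicity $n_B$ of each block type $B$ is then identified with the value of $\mathcal{CF}$ at $B$, via a M\"obius-inversion-style argument over the poset of blocks.

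The principal obstacle is the combinatorics of gluing three-dimensional cubes. In two parameters, strong exactness on a square is essentially a single Mayer--Vietoris relation that splits a module into birth, death, birth--death, and band pieces in one sweep, and the Cochoy--Oudot iterative argument on rectangular \emph{staircases} closes out the proof. In three parameters, a cube has eight vertices, twelve edges, and six faces, and strong exactness imposes consistent exactness on all of these substructures simultaneously; the six block-type families interact along shared faces in non-trivial ways. The hardest step will therefore be to show that these contributions can be peeled off one family at a time --- for instance extracting the birth-block pieces without obstructing the later extraction of band-type pieces --- and that the resulting iterative procedure terminates with a clean direct sum. This is precisely where the three-dimensional argument diverges from the two-dimensional template, and where I expect the bulk of the new technical work to reside.
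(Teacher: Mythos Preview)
Your proposal diverges substantially from the paper's argument, and the divergence is at a point where your sketch has a real gap. You propose to discretize $\mathbb{R}^3$, decompose $\mathbb{M}$ on each elementary cube using strong exactness, and then glue these local decompositions along shared faces by a transfinite induction over refinements. But local direct-sum decompositions are not canonical: the complement of a summand on one cube is a \emph{choice}, and strong exactness on a merged cube says nothing about whether independent choices on two adjacent cubes can be reconciled. Over a finite grid one might hope to thread such choices through by a finite induction, but over $\mathbb{R}^3$ there is no well-ordering of cubes compatible with the face-adjacency structure, and ``refinements'' do not converge to anything in an obvious sense --- blocks may have open or closed boundaries at arbitrary real coordinates, so no countable grid sees them all. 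The phrase ``transfinite induction over refinements'' is doing all the work and is not a defined procedure. This is not how Cochoy--Oudot argue in two parameters either.

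What the paper actually does is avoid discretization entirely. For each block $B$ (determined by cuts, not grid points) it defines subspaces $V_{B,\boldsymbol{t}}^{\pm}\subseteq \mathbb{M}_{\boldsymbol{t}}$ via intersections and sums of images and kernels along the coordinate axes, proves transport identities $\rho_{\boldsymbol{s}}^{\boldsymbol{t}}(V_{B,\boldsymbol{s}}^{\pm})=V_{B,\boldsymbol{t}}^{\pm}$ using strong exactness, and passes to inverse limits $V_B^{\pm}$ over $\boldsymbol{t}\in B$. A complement $M_B^0$ of $V_B^-$ in $V_B^+$ then yields a genuine submodule $\mathbb{M}_B\cong\Bbbk_B^{\,n_B}$ with $n_B=\dim\mathcal{CF}_B(\mathbb{M})$. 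Showing that the $\mathbb{M}_B$ are in direct sum and span $\mathbb{M}$ uses Crawley-Boevey's machinery of disjoint and covering families of sections $(F_{B,\boldsymbol{t}}^-,F_{B,\boldsymbol{t}}^+)$, applied separately to the five block types (three layer types, birth, death). The death blocks are handled by a duality trick: one first splits off birth and layer summands, leaving a submodule $\tilde{\mathbb{M}}$, and then shows $\tilde{\mathbb{M}}^*$ is again pfd and strongly exact with no birth part, so the same argument decomposes it. None of this involves cubes, grids, or gluing; the global objects $V_B^{\pm}$ are built in one shot from the cut data.
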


\section{Preliminary}
To state our results, we need to define some notations. 
\subsection{The Definition of Persistence Modules}
In the paper, we consider $\mathbb{R}^3$ as a poset with the product order:
$$(x_1,x_2,x_3)\leq (y_1,y_2,y_3) \in \mathbb{R}^3 \Leftrightarrow x_i\leq y_i \text{ for all }i.$$
A persistence module indexed over $\mathbb{R}^3$ is a functor $\mathbb{M}:(\mathbb{R}^3,\leq) \to \textbf{Vec}_{\Bbbk}$ which is called an $3$-persistence module, where $\textbf{Vec}_{\Bbbk}$ is the category of finitely dimensional vector spaces over $\Bbbk$. 
For convenience of representation, we will denote $\mathbb{M}(\boldsymbol{t})$ with $\boldsymbol{t} \in \mathbb{R}^3$ as $\mathbb{M}_{\boldsymbol{}}$ and $\mathbb{M}(\boldsymbol{s} \leq \boldsymbol{t})$ with $\boldsymbol{s} \leq \boldsymbol{t} \in \mathbb{R}^3$ as $\rho_{\boldsymbol{s}}^{\boldsymbol{t}}$ which is a linear map. 
The persistence modules $\mathbb{M}$ we define are called pointwise finite-dimensional (\textbf{pfd}). 
	
\subsection{Cuts, Cuboids, and Block}
A cut on the real numbers $\mathbb{R}$ is a partition of $\mathbb{R}$ into two disjoint subsets $c^+$ and $c^-$ such that for every $x\in c^-$ and $y \in c^+$, the inequality $x<y$ holds. 
This definition formalizes the idea of "splitting" $\mathbb{R}$ into a lower set $c^-$ and an upper set $c^+$, where every element of $c^-$ lies strictly below every element of $c^+$. 
\vspace{0.3cm}
\begin{example}
    Showing two different cuts:
    \begin{itemize}
        \item $c=(c^-,c^+)$ with $c^-=(-\infty,1]$ and $c^+=(1,+\infty)$;
		\item $c=(c^-,c^+)$ with $c^-=(-\infty,1)$ and $c^+=[1,+\infty)$
    \end{itemize}
\end{example}
\vspace{0.3cm}
If $c^-=\emptyset$ or $c^+=\emptyset$, we call the cut $c$ trivial.
	
In $\mathbb{R}^3$, we can determine a cuboid $C$ by $3$ pairing cuts $(c_1,c^1), (c_2,c^2),(c_3,c^3)$ in which $c_1,c^1,c_2,c^2,c_3, c^3$ are cuts, so $C=({c_1}^+ \cap {c^1}^-)\times ({c_2}^+ \cap {c^2}^-) \times ({c_3}^+ \cap {c^3}^-)$. 
Every cuboid does not necessarily have to be open or closed.
Some special cuboids, called blocks, will be detailed in the following.
    
These blocks can be divided into three major classes: layer block, birth block, and death block. The first major class is further divided into $3$ sub-classes, each shown below.
Let $C=({c_1}^+ \cap {c^1}^-)\times ({c_2}^+ \cap {c^2}^-) \times ({c_3}^+ \cap {c^3}^-)$,
\begin{itemize}
    \item If all cuts except $c_i, c^i$ are trivial, we call $C$ a $i$-layer block; 
    \item If $c^1,c^2,c^3$ are trivial, we call $C$ a birth block; 
    \item If $c_1,c_2,c_3$ are trivial, we call $C$ a death block.
\end{itemize}

\begin{figure}[H]
    \centering
    \includegraphics[width=1\linewidth]{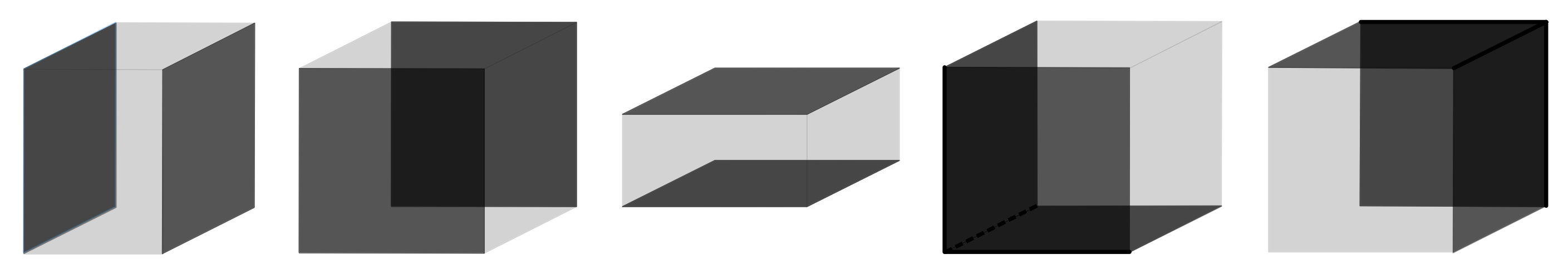}
    \caption{From left to right: three classes of layer blocks, birth blocks, death blocks}
    \label{fig:enter-label}
\end{figure}

\subsection{The Results of Cochoy and Oudot}
Before continuing the discussion, we need to review some of Cochoy and Oudot's definitions and results\cite{cochoy2020decomposition}. 
Cochoy and Oudot considered the block decomposition of 2-parameter persistence modules and proved the theorem of decomposition of \textbf{pfd} and strongly exact 2-parameter persistence modules.
	
In $\mathbb{R}^2$, we may also define 2-parameter cuboids, rectangles $R$, by two pairing cuts, $R=({c_1}^+ \cap {c^1}^-)\times ({c_2}^+ \cap {c^2}^-)$. 
What's more, the special rectangles, which are blocks, are as follows: 
\begin{figure}[H]
    \centering
    \includegraphics[width=1\linewidth]{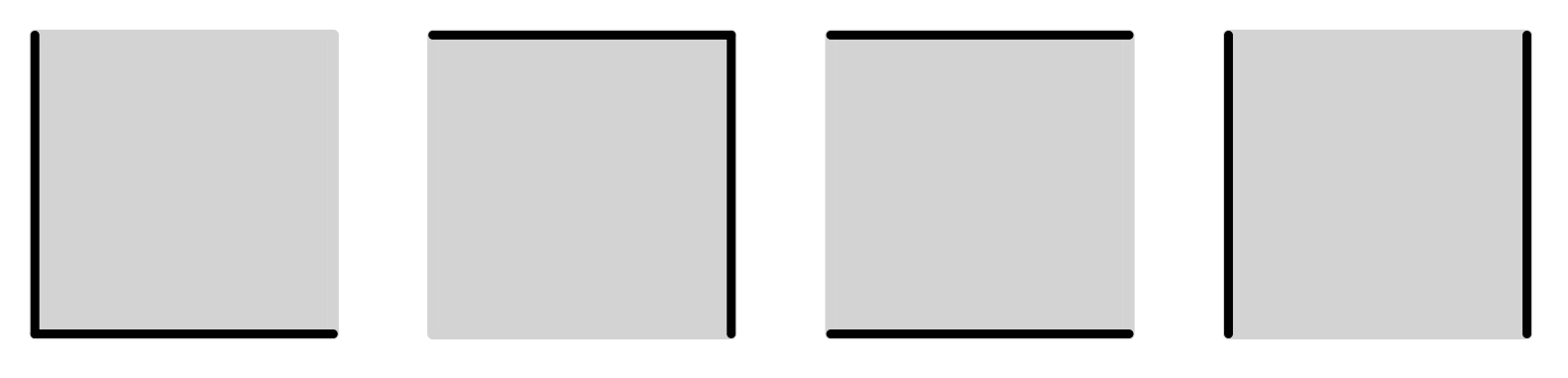}
    \caption{From left to right: birth blocks, death blocks, horizontal blocks, vertical blocks}
    \label{fig:enter-label}
\end{figure}

In 2-parameter persistence modules $\mathbb{M}: (\mathbb{R}^2,\leq) \to \textbf{Vec}_{\Bbbk}$, for any $(x_1,x_2)\leq (y_1,y_2) \in \mathbb{R}^2$, we have following commutative diagram:
$$
\xymatrix{
    \mathbb{M}_{(x_1,y_2)} \ar[r]^{\rho_{(x_1,y_2)}^{(y_1,y_2)}} & \mathbb{M}_{(y_1,y_2)}\\
    \mathbb{M}_{(x_1,x_2)} \ar[r]^{\rho_{(x_1,x_2)}^{(y_1,x_2)}} \ar[u]^{\rho_{(x_1,x_2)}^{(x_1,y_2)}} & \mathbb{M}_{(y_1,x_2)} \ar[u]_{\rho_{(y_1,x_2)}^{(y_1,y_2)}}\\
}$$
If for all $(x_1,x_2) \leq (y_1,y_2) \in \mathbb{R}^2$, the following sequence is exact, we call the 2-parameter persistence module $\mathbb{M}$ is \textbf{2-parameter strongly exact}. 
$$\mathbb{M}_{(x_1,x_2)} \xrightarrow[]{(\rho_{(x_1,x_2)}^{(x_1,y_2)},\rho_{(x_1,x_2)}^{(y_1,x_2)})} \mathbb{M}_{(x_1,y_2)}\oplus \mathbb{M}_{(y_1,x_2)} \xrightarrow[]{\rho_{(x_1,y_2)}^{(y_1,y_2)}-\rho_{(y_1,x_2)}^{(y_1,y_2)}} \mathbb{M}_{(y_1,y_2)}$$
\vspace{0.3cm}
\begin{theorem}
    Let $\mathbb{M}$ be a pointwise finite-dimensional and strongly exact 2-parameter persistence module. 
    Then, $\mathbb{M}$ decomposes uniquely (up to isomorphism and reordering of the terms) as a direct sum of block modules: 
    $$\mathbb{M}\cong \bigoplus_{B\in \mathcal{B}(\mathbb{M})} \Bbbk_B$$
    where $\Bbbk_{B}$ is the block module associated with a block $B$, and $\mathcal{B}(\mathbb{M})$ is a multiset of blocks determined by $\mathbb{M}$. 
\end{theorem}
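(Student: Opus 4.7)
The plan is to reduce the theorem to a classification of indecomposable summands, following the strategy of Cochoy and Oudot. The proof splits into three stages: (i) obtain some decomposition into indecomposables using only the pfd hypothesis, (ii) show strong exactness descends to each summand, and (iii) use strong exactness plus indecomposability to force each summand to be a block module.

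For (i), the pfd hypothesis places $\mathbb{M}$ in the scope of the Azumaya--Krull--Schmidt--Remak theorem for pointwise finite-dimensional persistence modules over posets (as in Botnan--Crawley-Boevey). This yields a direct-sum decomposition $\mathbb{M} \cong \bigoplus_\lambda \mathbb{N}_\lambda$ into indecomposables with local endomorphism rings, unique up to isomorphism and reindexing; this handles the uniqueness clause of the theorem for free. Stage (ii) is essentially formal: strong exactness asserts the exactness of an explicit sequence of finite-dimensional vector spaces at each rectangle, and exactness is preserved by taking direct summands, so each $\mathbb{N}_\lambda$ is itself pfd and strongly exact.

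The heart of the argument is stage (iii): every pfd, strongly exact, indecomposable 2-parameter persistence module $\mathbb{N}$ is isomorphic to some $\Bbbk_B$. To approach this for a fixed such $\mathbb{N}$, I would first invoke the 1-parameter decomposition theorem fiberwise: for each horizontal line $\{x_1\} \times \mathbb{R}$ and each vertical line $\mathbb{R} \times \{x_2\}$, the restriction decomposes as a direct sum of interval modules. Record the endpoints of these intervals, viewed as functions of the transverse coordinate, as four ``generalized cuts'' (two birth cuts, two death cuts), one pair per axis direction. Strong exactness applied at a rectangle $[(x_1,x_2),(y_1,y_2)]$ supplies a Mayer--Vietoris-type relation between these cuts at adjacent slices, forcing them to be globally consistent. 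Indecomposability then rules out configurations whose support is split by a ``bad corner'' into two unrelated pieces, leaving precisely the four block shapes: birth block, death block, horizontal band, vertical band.

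The main obstacle I expect is the final step of stage (iii): ruling out non-block supports via indecomposability. Concretely, given an indecomposable $\mathbb{N}$ whose support is not of block type, I would need to produce a nontrivial idempotent endomorphism using the Mayer--Vietoris sequence at a bad corner to obtain a splitting $\mathbb{N}_x \cong A_x \oplus B_x$ that is natural in $x$ and in every transition map; this would contradict the locality of $\mathrm{End}(\mathbb{N})$. The technical difficulty is extending the locally defined splittings across all of $\mathbb{R}^2$, which I expect to handle either by a Zorn-lemma argument on compatible local splittings or by an inductive patching procedure tracking the pfd limits across cuts. Once the support of $\mathbb{N}$ has been identified with a block $B$, one more application of strong exactness and indecomposability forces every internal transition map $\rho_s^t$ inside $B$ to be an isomorphism between $1$-dimensional spaces, yielding $\mathbb{N} \cong \Bbbk_B$. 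Uniqueness, and hence the identification of the multiset $\mathcal{B}(\mathbb{M})$, follows from the Krull--Schmidt step in stage (i).
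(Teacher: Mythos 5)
Your stages (i) and (ii) are sound: the Botnan--Crawley-Boevey/Azumaya decomposition applies to any pfd module over a poset, and a levelwise direct sum of three-term complexes is exact iff each summand is, so strong exactness does pass to summands. But stage (iii), which you correctly identify as the heart of the argument, contains a genuine gap rather than a deferrable technicality, and it is exactly the gap that the strategy followed here (and in Cochoy--Oudot) is designed to avoid. Two concrete problems. First, your ``four generalized cuts'' are not well-defined: the restriction of an indecomposable pfd module to a single line $\{x_1\}\times\mathbb{R}$ is a priori a direct sum of \emph{many} interval modules with many distinct endpoints, so there is no canonical pair of birth/death cuts per axis to record; asserting that the fiberwise data organizes into one cut per direction already presupposes the conclusion that $\mathbb{N}$ is a block module. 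Second, and more seriously, the step ``produce a nontrivial idempotent endomorphism\dots natural in $x$ and in every transition map'' is the entire difficulty of the theorem. Strong exactness at a single rectangle gives you a splitting of one vector space; making infinitely many such local splittings compatible with all transition maps in both directions is not a routine Zorn or patching argument, because a choice of complement at one index constrains the admissible choices at every comparable index, and there is no obvious partial order on partial splittings whose chains have upper bounds that remain natural. This is precisely why the proof used here never classifies indecomposables or constructs idempotents: it instead builds \emph{functorial} subspaces $\mathrm{Im}^{\pm}_{c,\boldsymbol{t}}$, $\mathrm{Ker}^{\pm}_{c,\boldsymbol{t}}$ and the sections $V^{\pm}_{B,\boldsymbol{t}}$, shows these are carried onto one another by the transition maps (so no compatibility of choices ever has to be arranged by hand), and then invokes the disjointness and covering calculus for collections of sections to conclude that the resulting submodules $\mathbb{M}_B$ are in direct sum and exhaust $\mathbb{M}$; uniqueness then comes from the counting functor $\mathcal{CF}$ rather than from Krull--Schmidt. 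Unless you can supply an actual construction of the global idempotent (or replace it with functorial subspaces, at which point you have rederived the standard proof), stage (iii) does not go through as written.
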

\vspace{0.3cm}
For a block $B$, a block module $\Bbbk_B$ is defined as follows
\begin{equation}
    (\Bbbk_B)_{\boldsymbol{t}}=
    \begin{cases}
        \Bbbk,&\quad\boldsymbol{t}\in B\\
        0,&\quad\boldsymbol{t}\notin B\\
    \end{cases}
\end{equation}
and for any $\boldsymbol{s}\leq \boldsymbol{t}$, the morphisms $\rho_{\boldsymbol{s}}^{\boldsymbol{t}}$ in $\Bbbk_B$ are
\begin{equation}
    \rho_{\boldsymbol{s}}^{\boldsymbol{t}}=
    \begin{cases}
        id,&\quad\text{if }\boldsymbol{s},\boldsymbol{t}\in B\\
        0,&\quad\text{otherwise.}\\
    \end{cases}
\end{equation}

\section{Main Results and Their Proof}
Before we begin this section, it is necessary to explain that the results presented in this section were obtained by us at the end of 2023, and at that time, we chose not to make them public. 
However, in 2024, Lerch et al.\cite{lebovici2024local} published a more general solution to block-decomposability for multi-parameter persistence modules on arXiv. 
Despite this, I have decided to retain this content in my thesis because my proof method follows the approach\cite{cochoy2020decomposition} used by Oudot in solving the block-decomposability for 2-parameter persistence modules, and I believe this approach can be applied to the proof of the block decomposition theorem for multi-parameter persistence modules. 
Additionally, our perspective on the generalization of the high-dimensional exactness condition differs from Lerch's, which is why I believe it is meaningful to include this part.

To solve the block decomposition of 3-parameter persistence modules, we first need to generalize the strong exactness to a 3-parameter case. 

Let the following diagram be a commutative diagram in $\textbf{Vec}_{\Bbbk}$
$$\xymatrix{
    B\ar[r]^{g_1}&D\\
    A\ar[u]^{f_1} \ar[r]^{f_2}&C \ar[u]^{g_2}	
}$$
and deduce two commutative diagram
$$\xymatrix{
    &B\ar[r]^{g_1}&D\\
    &B\prod_{D}C\ar[u]\ar[r]&C\ar[u]^{g_2}\\
    A\ar@/^/[uur]^{f_1}\ar[ur]_{\exists !}^{f}\ar@/_/[urr]_{f_2}&&
}$$
    
$$\xymatrix{
    &&D\\
    B\ar@/^/[urr]^{g_1} \ar[r]&B\coprod_{A} C\ar[ur]_{\exists !}^{g}&\\
    A\ar[u]^{f_1}\ar[r]^{f_2}&C\ar@/_/[uur]_{g_2} \ar[u]&
}$$
in which $B\prod_{D}C=\{b+c \in B\oplus C : g_1(b)=g_2(c) \}$ and $B\coprod_{A} C = B\oplus C / \sim $, which $\sim$ is a deduced equivalent relation by $f_1(a)\sim f_2(a)$ for any $a\in A$. 
\vspace{0.3cm}
\begin{lemma}
    The following conditions are equivalence
    \begin{itemize}
		\item The sequence 
        $A \xrightarrow[]{(f_1,f_2)} B \oplus C \xrightarrow[]{g_1-g_2} D$
		is exact;
		\item $f$ is surjective;
		\item $g$ is injective.
    \end{itemize}
\end{lemma}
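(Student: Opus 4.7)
The plan is to reduce both the pullback statement (2) and the pushout statement (3) directly to the exactness condition (1) by unwinding the explicit constructions of $B \prod_D C$ and $B \coprod_A C$ in $\textbf{Vec}_{\Bbbk}$; no heavy machinery should be needed, only careful sign tracking.

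For (1) $\Leftrightarrow$ (2), I would identify $B \prod_D C$ with the subspace $\ker(g_1 - g_2) \subseteq B \oplus C$, so that the universal factorization $f : A \to B \prod_D C$ furnished by the pullback property is literally the map $a \mapsto (f_1(a), f_2(a))$; commutativity $g_1 f_1 = g_2 f_2$ of the outer square is exactly what guarantees this map lands in the pullback. Under this identification, the image of $f$ inside $B \oplus C$ coincides with $\mathrm{im}(f_1, f_2)$, whereas $\ker(g_1 - g_2)$ is by definition the whole of $B \prod_D C$. Therefore the exactness equality $\ker(g_1 - g_2) = \mathrm{im}(f_1, f_2)$ is nothing other than $\mathrm{im}(f) = B \prod_D C$, which is (2).

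For (1) $\Leftrightarrow$ (3), I would realize the pushout as the quotient $(B \oplus C)/N$, where $N = \{(f_1(a), -f_2(a)) : a \in A\}$ is the subspace encoding the identification $f_1(a) \sim f_2(a)$. The induced map is $g([(b,c)]) = g_1(b) + g_2(c)$, chosen so that $g \circ i_B = g_1$ and $g \circ i_C = g_2$; again, well-definedness is just the outer commutativity. Injectivity of $g$ then reads: if $g_1(b) + g_2(c) = 0$, then $(b,c) \in N$. After the substitution $c' = -c$, this becomes the statement that every $(b, c') \in B \oplus C$ with $g_1(b) = g_2(c')$ lies in $\mathrm{im}(f_1, f_2)$, which is precisely surjectivity of $f$ onto $B \prod_D C$, and hence is equivalent to (1) and (2).

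The only delicate point I anticipate is the sign bookkeeping between the sequence in (1), which uses $g_1 - g_2$, and the canonical pushout map, which naturally produces $g_1 + g_2$; the asymmetry is absorbed entirely by the sign appearing in the defining subspace $N$ of the pushout. Apart from this, the argument is a routine unpacking of definitions — indeed the underlying statement is a standard fact about pullbacks and pushouts in any abelian category, here specialized to $\textbf{Vec}_{\Bbbk}$ — so I do not expect a genuine obstacle.
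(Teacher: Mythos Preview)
Your proposal is correct and essentially the same as the paper's proof: both unpack the explicit descriptions of the pullback $B\prod_D C = \ker(g_1-g_2)$ and the pushout $(B\oplus C)/\{(f_1(a),-f_2(a))\}$ and chase elements through the sign change $c' = -c$. The only cosmetic difference is that the paper argues in a cycle $(1)\Rightarrow(2)\Rightarrow(3)\Rightarrow(1)$ while you prove the two biconditionals $(1)\Leftrightarrow(2)$ and $(1)\Leftrightarrow(3)$ directly.
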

\begin{proof}
    (1)$\Rightarrow$(2):
    For any $(b,c)\in B\prod_{D}C$, we can find a vector $a\in A$ such that $f_1(a)=b$ and $f_2(a)=c$ due to the strong exactness. So $f(a)=(b,c)$, $f$ is surjective.
		
    (2)$\Rightarrow$(3):
    For any $b\in B$ and $c\in C$ such that $[b+c] \in B\coprod_{A} C$, if $g([b+c])=0$, then we have $g([b])=g([-c])$, that is $g_1(b)=g_2(-c)$. So $(b,-c)\in B\prod_{D}C$, and we can find out $a\in A$ such that $f_1(a)=b$ and $f_2(a)=-c$. So $[b+c]=0 \in B\coprod_{A} C$. $g$ is injective. 
		
    (3)$\Rightarrow$(1):
    For any $b \in B$ and $c \in C$ with $g_1(b)=g_2(c)$, $g([b-c])=g_1(b)-g_2(c)=0$. Since $g$ is injective, $[b]=[c]$. Thus there is a vector $a\in A$ such that $f_1(a)=b$ and $f_2(a)=c$.
\end{proof}
\vspace{0.3cm}
In the general case, we may also consider computing $f$ and $g$ similarly to the two-dimensional case.
Let $S$ be a finite set with $|S|=n$. The power set of $S$, $\mathcal{P}(S)=\{T: T\subseteq S\}$, is partially ordered set via inclusion. Let $\mathcal{P}_0(S)=\mathcal{P} \setminus \{\emptyset \}$ and $\mathcal{P}_1(S)=\mathcal{P}(S) \setminus \{S\}$.
A functor $\mathcal{X}:\mathcal{P}(S)\to \textbf{Vec}_{\Bbbk}$ is a commutative diagram shaped like a $n$-cube. 
What's more, we can get two morphisms $\psi:\mathcal{X}(\emptyset) \to \underset{T\in \mathcal{P}_0(S)}{\text{lim}}\mathcal{X} (T)$ and $\varphi:\underset{T\in \mathcal{P}_1(S)}{\text{colim}}\mathcal{X}(T) \to \mathcal{X}(S) $ naturally.

Consider 2-parameter persistence modules $M$, then any $(x_1,x_2)\leq (y_1,y_2) \in \mathbb{R}^2$, we can get a commutative diagram
$$
\xymatrix{
    \mathbb{M}_{(x_1,y_2)} \ar[r]^{\rho_{(x_1,y_2)}^{(y_1,y_2)}} & \mathbb{M}_{(y_1,y_2)}\\
    \mathbb{M}_{(x_1,x_2)} \ar[r]^{\rho_{(x_1,x_2)}^{(y_1,x_2)}} \ar[u]^{\rho_{(x_1,x_2)}^{(x_1,y_2)}} & \mathbb{M}_{(y_1,x_2)} \ar[u]_{\rho_{(y_1,x_2)}^{(y_1,y_2)}}\\
}$$
and the diagram deduces to a functor $\mathcal{X}:\mathcal{P}(S) \to \textbf{Vec}_{\Bbbk}$ with $|S|=2$. 
Thus for any functor $\mathcal{X}:\mathcal{P}(S) \to \textbf{Vec}_{\Bbbk}$ obtained by the above method, $\psi:\mathcal{X}(\emptyset) \to \underset{T\in \mathcal{P}_0(S)}{\text{lim}}\mathcal{X} (T)$ and $\varphi:\underset{T\in \mathcal{P}_1(S)}{\text{colim}}\mathcal{X}(T) \to \mathcal{X}(S)$ generated by the functor $\mathcal{X}$ are surjective and injective respectively if and only if $M$ is strongly exact. 
    
Now, we are considering block decomposition of $3$-dimensional persistence modules $\mathbb{M}:\mathbb{R}^3 \to \textbf{Vec}_{\Bbbk}$, so we need to extend the strong exactness about 2-parameter persistence modules to the conditions about $3$-dimensional persistence modules. Similar to the case of 2-parameter persistence modules, when considering the $3$-dimensional persistence modules, for any $(x_1,x_2,x_3) \leq (y_1,y_2,y_3) \in \mathbb{R}^3$, there is a commutative diagram like $3$-cube and the diagram induces the functor $\mathcal{X}(S):\mathcal{P}(S) \to \textbf{Vec}_{\Bbbk}$ with $|S|=3$, resulting in two morphisms $\psi:\mathcal{X}(\emptyset) \to \underset{T\in \mathcal{P}_0(S)}{\text{lim}}\mathcal{X} (T)$ and $\varphi:\underset{T\in \mathcal{P}_1(S)}{\text{colim}}\mathcal{X}(T) \to \mathcal{X}(S)$.
$$
\xymatrix{
    &\mathbb{M}_{(x_1,y_2,y_3)}\ar[rr]&&\mathbb{M}_{(y_1,y_2,y_3)}\\
    \mathbb{M}_{(x_1,x_2,y_3)}\ar[ru]\ar[rr]&&\mathbb{M}_{(y_1,x_2,y_3)}\ar[ru]&\\
    &\mathbb{M}_{(x_1,y_2,x_3)}\ar[uu]|\hole \ar[rr]|\hole &&\mathbb{M}_{(y_1,y_2,x_3)}\ar[uu]\\
    \mathbb{M}_{(x_1,x_2,x_3)}\ar[rr]\ar[uu]\ar[ru]&&\mathbb{M}_{(y_1,x_2,x_3)}\ar[uu]\ar[ru]&
}$$
Thus, when we consider the block decomposition of 3-parameter persistence modules, the strong exactness of 2-parameter block-decomposable persistence modules can be generalized to the following conditions: 3-parameter strong exactness. 
\vspace{0.3cm}
\begin{example}
    Consider the following 3-parameter persistence module $\mathbb{M}: \{ 0,1 \}^3 \to \textbf{Vec}_{\Bbbk}$
    $$
    \xymatrix{
		&\Bbbk\ar[rr]^{f} &&\Bbbk\oplus \Bbbk\\
		0\ar[ru]\ar[rr]&&\Bbbk\ar[ru]^{g}&\\
		&0\ar@{-->}[uu] \ar@{-->}[rr] &&\Bbbk\ar[uu]_{h}\\
		0\ar[rr]\ar[uu]\ar[ru]&&0\ar[uu]\ar[ru]&
    }
    $$
    where $f=\begin{pmatrix} 1\\ 0\end{pmatrix}$, $g=\begin{pmatrix} 0\\ 1\end{pmatrix}$, and $h=\begin{pmatrix} 1\\ 1\end{pmatrix}$. 
    It is obvious that the 3-parameter persistence module $\mathbb{M}$ is not block-decomposable. 
\end{example}
\vspace{0.3cm}
In the previous example, we know that only requiring $\varphi$ to be injective does not guarantee that 3-parameter persistence modules are block-decomposable. 
What's more, in 3-parameter cases, the two conditions that $\varphi$ is injective and $\psi$ is subjective are not equivalent.  
Therefore, it is reasonable to assume that $\varphi$ and $\psi$ are injective and subjective, respectively. 
\vspace{0.3cm}
\begin{definition}
    We say that 3-parameter persistence module $\mathbb{M}:\mathbb{R}^3 \to \textbf{Vec}_{\Bbbk}$ is 3-parameter strongly exact if the following conditions are satisfied
    \begin{itemize}
        \item for any $r \in \mathbb{R}$, $\mathbb{M}|_{\{r\} \times \mathbb{R} \times \mathbb{R}}$, $\mathbb{M}|_{\mathbb{R} \times \{r\} \times \mathbb{R}}$, $\mathbb{M}|_{\mathbb{R} \times \mathbb{R} \times \{r\}}$ are among 2-parameter strongly exact.
		\item for any $(x_1,x_2,x_3) \leq (y_1,y_2,y_3) \in \mathbb{R}^3$, the associated morphisms $\psi$ and $\varphi$ is surjective and injective respectively.
    \end{itemize} 
\end{definition}
\vspace{0.3cm}
We want to prove that if a 3-parameter persistence module $\mathbb{M}$ is strongly exact, then $\mathbb{M}$ can be decomposed as a direct sum of block modules. 
So we need to define the block modules and find all submodules of $\mathbb{M}$, which live exactly in blocks. 
\vspace{0.3cm}
\begin{definition}
    A persistence module $\mathbb{M}$ is called a block module if there is a block $\mathbb{M}$ such that $\mathbb{M}\cong \Bbbk_B$. 
\end{definition}
\vspace{0.3cm}

    \subsection{Some Basic Definitions and Results}
    In the 1-dimensional case, the interval modules of $\mathbb{M}:\mathbb{R} \to \textbf{Vec}_{\Bbbk}$ can be easily found since $\mathbb{R}$ is a totally ordered set. By computing $V_{I,t}^{+}:=\text{Im}_{I,t}^{+}\cap \text{Ker}_{I,t}^{+}$ and $V_{I,t}^{-}:=\text{Im}_{I,t}^{+}\cap \text{Ker}_{I,t}^{-}+\text{Im}_{I,t}^{-}\cap \text{Ker}_{I,t}^{+}$ in which $I\ni t$ is a interval of $\mathbb{R}$ and
    \begin{equation}
		\begin{aligned}
		  &\text{Im}_{I,t}^{+}={\underset{s\leq t}{\bigcap_{s\in I}}}\text{Im} \rho_s^t , &\text{Im}_{I,t}^{-}={\underset{s\leq t}{\sum_{s\notin I}}}\text{Im}\rho_s^t \\
		  &\text{Ker}_{I,t}^{+}={\underset{u\geq t}{\bigcap_{u\notin I}}}\text{Ker}\rho_t^u , &\text{Ker}_{I,t}^{-}={\underset{u\geq t}{\sum_{u\in I}}}\text{Ker}\rho_t^u\\
		\end{aligned}
    \end{equation}
    we can get $V_{I,t}^+/V_{I,t}^- \cong (\Bbbk_{I})_t$. 
    For $I=[a,b]$, $V_{I,t}^+/V_{I,t}^-$ denotes that the vector space whose dimension equals the number of generators, which were born at $a$ and died at $b$. 

    However $\mathbb{R}^n$, for $n\geq 2$, is not the totally ordered set, which results in $\text{Im}_{B,t}^{-}\subseteq \text{Im}_{B,t}^{+}$ and $\text{Ker}_{B,t}^{-}\subseteq \text{Ker}_{B,t}^{+}$, which hold in $1$-dimensional persistence modules, not holding in high-dimensional persistence modules, in which $B \ni t$ is any block.
    Thus we need to redefine $\text{Im}_{B,t}^{\pm}$ and $\text{Ker}_{B,t}^{\pm}$ in which $B \subseteq \mathbb{R}^3$ is any block and $t \in B$ .
    
    Firstly, we can establish the following notation in any persistence modules $\mathbb{M}: P\to \textbf{Vec}_{\Bbbk}$ in which $P$ is a poset: 
    \begin{equation}
		\begin{aligned}
		  &I_{P,t}^{+}:={\underset{s\leq t}{\bigcap_{s\in P}}}\text{Im }\rho_s^t , &I_{P,t}^{-}:={\underset{s\leq t}{\sum_{s\notin P}}}\text{Im }\rho_s^t \\
		  &K_{P,t}^{+}:={\underset{u\geq t}{\bigcap_{u\notin P}}}\text{Ker }\rho_t^u , &K_{P,t}^{-}:={\underset{u\geq t}{\sum_{u\in P}}}\text{Ker }\rho_t^u\\
		\end{aligned}
    \end{equation}
    But we know that $I_{P,t}^{+}\nsubseteq I_{P,t}^{+}$ and $K_{P,t}^{-} \nsubseteq K_{P,t}^{+}$ from the above discussion.
    Thus, We define that 
    \begin{equation}
		\begin{aligned}
		  &\text{Im}_{P,t}^{+}:=I_{P,t}^{+}, &\text{Im}_{P,t}^{-}:=I_{P,t}^{-} \cap I_{P,t}^{+} ,\\
		  &\text{Ker}_{P,t}^{+}:=K_{P,t}^{+}+K_{P,t}^{-}, &\text{Ker}_{P,t}^{-}:=K_{P,t}^{-}.
		\end{aligned}
    \end{equation}
    Obviously, $\text{Im}_{C,\boldsymbol{t}}^{-} \subset \text{Im}_{C,\boldsymbol{t}}^{+} $ and $\text{Ker}_{C,\boldsymbol{t}}^{-} \subset \text{Ker}_{C,\boldsymbol{t}}^{+}$.
    
    When we consider the \textbf{pfd} 3-parameter persistence module $\mathbb{M}:\mathbb{R}^3 \to \textbf{Vec}_{\Bbbk}$, the poset $P$ is a cuboid in $\mathbb{R}^3$, which is determined by three pairing cuts $\{c_1,c^1,c_2,c^2,c_3,c^3\}$ that is $C=({c_1}^+ \cap {c^1}^-)\times ({c_2}^+\cap {c^2}^-)\times ({c_3}^+\cap {c^3}^-)$.

    For any $\boldsymbol{t}=(t_1,t_2,t_3) \in C$,
    We construct these limits by considering the restrictions of the module $\mathbb{M}$ along $x$-axis,$y$-axis, and $z$-axis, respectively
    \begin{equation}
		\begin{aligned}
            &\text{Im}_{c_1,\boldsymbol{t}}^{+}={\underset{x\leq t_1}{\bigcap_{x\in {c_1}^+}}}\text{Im } \rho_{(x,t_2,t_3)}^{\boldsymbol{t}} & \text{Im}_{c_1,\boldsymbol{t}}^{-}={\sum_{x\in {c_1}^-}}\text{Im } \rho_{(x,t_2,t_3)}^{\boldsymbol{t}}\\
            &\text{Im}_{c_2,\boldsymbol{t}}^{+}={\underset{x\leq t_2}{\bigcap_{x\in {c_2}^+}}}\text{Im } \rho_{(t_1,x,t_3)}^{\boldsymbol{t}} & \text{Im}_{c_2,\boldsymbol{t}}^{-}={\sum_{x\in {c_2}^-}}\text{Im } \rho_{(t_1,x,t_3)}^{\boldsymbol{t}}\\
            &\text{Im}_{c_3,\boldsymbol{t}}^{+}={\underset{x\leq t_3}{\bigcap_{x\in {c_3}^+}}}\text{Im } \rho_{(t_1,t_2,x)}^{\boldsymbol{t}} & \text{Im}_{c_3,\boldsymbol{t}}^{-}={\sum_{x\in {c_3}^-}}\text{Im } \rho_{(t_1,t_2,x)}^{\boldsymbol{t}}\\
            &\text{Ker}_{c^1,\boldsymbol{t}}^{+}={\bigcap_{x\in {c^1}^+}}\text{Ker } \rho_{\boldsymbol{t}}^{(x,t_2,t_3)} & \text{Ker}_{c^1,\boldsymbol{t}}^{-}={\underset{x\geq t_1}{\sum_{x\in {c^1}^-}}}\text{Ker } \rho_{\boldsymbol{t}}^{(x,t_2,t_3)}\\
            &\text{Ker}_{c^2,\boldsymbol{t}}^{+}={\bigcap_{x\in {c^2}^+}}\text{Ker } \rho_{\boldsymbol{t}}^{(t_1,x,t_3)} & \text{Ker}_{c^2,\boldsymbol{t}}^{-}={\underset{x\geq t_2}{\sum_{x\in {c^2}^-}}}\text{Ker } \rho_{\boldsymbol{t}}^{(t_1,x,t_3)}\\
            &\text{Ker}_{c^3,\boldsymbol{t}}^{+}={\bigcap_{x\in {c^3}^+}}\text{Ker } \rho_{\boldsymbol{t}}^{(t_1,t_2,x)} & \text{Ker}_{c^3,\boldsymbol{t}}^{-}={\underset{x\geq t_3}{\sum_{x\in {c^3}^-}}}\text{Ker } \rho_{\boldsymbol{t}}^{(t_1,t_2,x)}\\
		\end{aligned}
    \end{equation}
    Through simple computation, we can get 
    \begin{equation}
		\begin{aligned}
		  \text{Im}_{C,\boldsymbol{t}}^{+}&=\text{Im}_{c_1,\boldsymbol{t}}^{+}\cap \text{Im}_{c_2,\boldsymbol{t}}^{+} \cap \text{Im}_{c_3,\boldsymbol{t}}^{+}\\
		  \text{Im}_{C,\boldsymbol{t}}^{-}&=\text{Im}_{c_1,\boldsymbol{t}}^{-}\cap \text{Im}_{c_2,\boldsymbol{t}}^{+} \cap \text{Im}_{c_3,\boldsymbol{t}}^{+} + \text{Im}_{c_1,\boldsymbol{t}}^{+}\cap \text{Im}_{c_2,\boldsymbol{t}}^{-} \cap \text{Im}_{c_3,\boldsymbol{t}}^{+} + \text{Im}_{c_1,\boldsymbol{t}}^{+}\cap \text{Im}_{c_2,\boldsymbol{t}}^{+} \cap \text{Im}_{c_3,\boldsymbol{t}}^{-}\\
          \text{Ker}_{C,\boldsymbol{t}}^{+}&=\text{Ker}_{c^1,\boldsymbol{t}}^{-}+\text{Ker}_{c^2,\boldsymbol{t}}^{-}+\text{Ker}_{c^3,\boldsymbol{t}}^{-}+\text{Ker}_{c^1,\boldsymbol{t}}^{+}\cap \text{Ker}_{c^2,\boldsymbol{t}}^{+}\cap \text{Ker}_{c^3,\boldsymbol{t}}^{+}\\
		  \text{Ker}_{C,\boldsymbol{t}}^{-}&=\text{Ker}_{c^1,\boldsymbol{t}}^{-}+\text{Ker}_{c^2,\boldsymbol{t}}^{-}+\text{Ker}_{c^3,\boldsymbol{t}}^{-}
		\end{aligned}
    \end{equation}
    \textbf{Note:} If we do not make any special explanation, all the persistence modules we will discuss later are \textbf{pfd} 3-parameter persistence modules $\mathbb{M}:\mathbb{R}^3\to \textbf{Vec}_{\Bbbk}$ satisfying the 3-parameter strong exactness. 
	
    The following lemma allows these concepts, such as $\text{Im}_{c_1,\boldsymbol{t}}^{\pm}$, $\text{Ker}_{c^1,\boldsymbol{t}}^{\pm}$, involving infinity to be discussed concretely
    \vspace{0.3cm}
    \begin{lemma}\label{1}
		$\mathbb{M}$ can be extended to the persistence module over $[-\infty,+\infty]^3$ by defining $\mathbb{M}_{(\infty,\cdot,\cdot)}=\mathbb{M}_{(\cdot,\infty,\cdot)}=\mathbb{M}_{(\cdot,\cdot,\infty)}=0$. Then 
		$$
		\text{Im}_{c_1,\boldsymbol{t}}^{+}=\text{Im }\rho_{(x,t_2,t_3)}^{\boldsymbol{t}} \text{ for some } x\in {c_1}^+\cap (-\infty,t_1] \text{ and any lower } x\in {c_1}^+,
		$$
		$$
		\text{Im}_{c_1,\boldsymbol{t}}^{-}=\text{Im }\rho_{(x,t_2,t_3)}^{\boldsymbol{t}} \text{ for some } x\in {c_1}^-\cup \{\infty\} \text{ and any greater } x\in {c_1}^-,
		$$
		$$
		\text{Ker}_{c^1,\boldsymbol{t}}^{+}=\text{Ker }\rho_{\boldsymbol{t}}^{(x,t_2,t_3)} \text{ for some } x\in {c^1}^+ \cup \{+\infty\} \text{ and any lower } x\in {c^1}^+,
		$$
		$$
		\text{Ker}_{c^1,\boldsymbol{t}}^{-}=\text{Ker }\rho_{\boldsymbol{t}}^{(x,t_2,t_3)} \text{ for some } x\in {c^1}^- \cap [t_1,+\infty) \text{ and any greater } x \in {c^x}^-. 
		$$
    The results for the cuts $c_2,c^2,c_3,c^3$ are similar to those for $c_1,c^1$, so we will not elaborate on them further. 
    \end{lemma}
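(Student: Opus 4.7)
The plan is to exploit pointwise finite-dimensionality (pfd) together with the fact that each cut is a totally ordered partition of $\mathbb{R}$: the four families of images and kernels in question are monotone chains of subspaces of the single finite-dimensional vector space $\mathbb{M}_{\boldsymbol{t}}$, and so must stabilize at some $x$ in the relevant cut.

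First I would treat $\text{Im}_{c_1,\boldsymbol{t}}^{+}$. For $x \leq x' \leq t_1$ both in ${c_1}^+$, the factorization $\rho_{(x,t_2,t_3)}^{\boldsymbol{t}} = \rho_{(x',t_2,t_3)}^{\boldsymbol{t}} \circ \rho_{(x,t_2,t_3)}^{(x',t_2,t_3)}$ gives $\text{Im }\rho_{(x,t_2,t_3)}^{\boldsymbol{t}} \subseteq \text{Im }\rho_{(x',t_2,t_3)}^{\boldsymbol{t}}$. Hence the family $\{\text{Im }\rho_{(x,t_2,t_3)}^{\boldsymbol{t}} : x \in {c_1}^+ \cap (-\infty,t_1]\}$ is totally ordered by inclusion and descending as $x$ decreases. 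Since $\boldsymbol{t} \in C$ forces $t_1 \in {c_1}^+$ the index set is nonempty, and pfd of $\mathbb{M}_{\boldsymbol{t}}$ makes this descending chain stabilize at some $x_0 \in {c_1}^+ \cap (-\infty,t_1]$; the common value for all $x \in {c_1}^+$ with $x \leq x_0$ then coincides with the intersection that defines $\text{Im}_{c_1,\boldsymbol{t}}^{+}$.

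The remaining three cases follow by the same monotonicity/stabilization principle with the direction of the chain flipped appropriately. For $\text{Im}_{c_1,\boldsymbol{t}}^{-}$, the family indexed by $x \in {c_1}^-$ is ascending, so its sum is the union of a chain and stabilizes at some maximal $x \in {c_1}^-$; the convention $\mathbb{M}_{(\infty,\cdot,\cdot)}=0$ handles the degenerate case ${c_1}^- = \emptyset$ by letting $x = \infty$ represent the zero subspace. For $\text{Ker}_{c^1,\boldsymbol{t}}^{+}$, factorization in the target makes $\text{Ker }\rho_{\boldsymbol{t}}^{(x,t_2,t_3)}$ nondecreasing in $x$, so the intersection over ${c^1}^+$ is a descending chain that stabilizes at some sufficiently small $x_0 \in {c^1}^+$ (or at $x_0 = +\infty$ in the extension when ${c^1}^+ = \emptyset$, giving all of $\mathbb{M}_{\boldsymbol{t}}$). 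For $\text{Ker}_{c^1,\boldsymbol{t}}^{-}$, the chain is ascending on $x \in {c^1}^- \cap [t_1,+\infty)$, which is nonempty because $t_1 \in {c^1}^-$, and again stabilizes at some $x_0$ therein. The statements for $c_2,c^2,c_3,c^3$ are identical after permuting coordinates.

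The argument is not conceptually hard; the main obstacle is purely bookkeeping. One must, case by case, verify that the index set over which a chain is formed is nonempty, so that the stabilized $x_0$ actually lies in the cut, and select the correct $\pm\infty$ convention from the extension of $\mathbb{M}$ to $[-\infty,+\infty]^3$ to cover trivial cuts uniformly. No use is made of the 3-parameter strong exactness hypothesis: the lemma is a purely pfd/chain-length statement that only sets up the notational apparatus for the decomposition theorems to follow.
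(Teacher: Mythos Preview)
Your proposal is correct. The paper states this lemma without proof, treating it as a direct carry-over of the corresponding fact from the 2-parameter setting in Cochoy--Oudot; your monotone-chain plus pfd-stabilization argument is exactly the standard justification and matches what the cited reference does in dimension two.
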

    \vspace{0.3cm}
    Due to the 3-parameter strong exactness, we can decompose the image and kernel in 3-parameter persistence module $\mathbb{M}:\mathbb{R}^3 \to \textbf{Vec}_{\Bbbk}$ into a simpler form. 
    \vspace{0.3cm}
    \begin{lemma}\label{3}
		For any $\boldsymbol{s}\leq \boldsymbol{t} \in \mathbb{R}^3$, we have
		
		$\text{Im }\rho_{\boldsymbol{s}}^{\boldsymbol{t}}=\text{Im }\rho_{(s_1,t_2,t_3)}^{\boldsymbol{t}}\cap \text{Im }\rho_{(t_1,s_2,t_3)}^{\boldsymbol{t}}\cap \text{Im }\rho_{(t_1,t_2,s_3)}^{\boldsymbol{t}},$
		
		$\text{Ker }\rho_{\boldsymbol{s}}^{\boldsymbol{t}}=\text{Ker }\rho_{\boldsymbol{s}}^{(t_1,s_2,s_3)}+\text{Ker }\rho_{s}^{(s_1,t_2,s_3)}+\text{Ker }\rho_{\boldsymbol{s}}^{(s_1,s_2,t_3)}.$
    \end{lemma}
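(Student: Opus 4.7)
The plan is to establish both equalities by separate containment arguments, with the easy direction in each case following from functoriality of $\mathbb{M}$, and the harder direction being supplied by the two halves of 3-parameter strong exactness. Throughout, let $e_i$ denote the vertex of the cube $[\boldsymbol{s},\boldsymbol{t}]$ whose $i$-th coordinate is $t_i$ and whose other two coordinates are the corresponding $s_j$, and let $e_{ij}$ denote the vertex whose coordinates in slots $i,j$ are $t_i,t_j$ and whose remaining coordinate is $s_k$.

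For the image identity, the inclusion $\subseteq$ is immediate from the factorizations $\rho_{\boldsymbol{s}}^{\boldsymbol{t}} = \rho_{e_{ij}}^{\boldsymbol{t}} \circ \rho_{\boldsymbol{s}}^{e_{ij}}$. For the reverse, I would take $v \in \mathbb{M}_{\boldsymbol{t}}$ in the triple intersection and write $v = \rho_{e_{ij}}^{\boldsymbol{t}}(a_{ij})$ for each pair $\{i,j\} \subset \{1,2,3\}$. On each of the three 2-faces $\{x_i = t_i\}$ of the cube (the square with corners $e_i, e_{ij}, e_{ik}, \boldsymbol{t}$), I would apply 2-parameter strong exactness to the compatible pair $(a_{ij}, a_{ik})$ lying over $v$ to produce $a_i \in \mathbb{M}_{e_i}$ with $\rho_{e_i}^{e_{ij}}(a_i) = a_{ij}$ and $\rho_{e_i}^{e_{ik}}(a_i) = a_{ik}$. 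A direct check confirms that the seven-tuple $(a_1, a_2, a_3, a_{12}, a_{13}, a_{23}, v)$ is a compatible family in $\underset{T \in \mathcal{P}_0(S)}{\text{lim}}\, \mathcal{X}(T)$ for the cube over $[\boldsymbol{s},\boldsymbol{t}]$, since the only nontrivial commutativities are the edge conditions at each $e_i \to e_{ij}$, which are precisely what we built into the choice of $a_i$. Surjectivity of $\psi$ then supplies a preimage $w \in \mathbb{M}_{\boldsymbol{s}}$ with $\rho_{\boldsymbol{s}}^{\boldsymbol{t}}(w) = v$, completing the image identity.

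For the kernel identity, $\supseteq$ is immediate. For $\subseteq$, suppose $w \in \text{Ker }\rho_{\boldsymbol{s}}^{\boldsymbol{t}}$ and set $u_1 = \rho_{\boldsymbol{s}}^{e_1}(w)$, which lies in $\text{Ker }\rho_{e_1}^{\boldsymbol{t}}$. The face $\{x_1 = t_1\}$ is 2-parameter strongly exact, and applying the surjectivity of its pullback comparison map to the compatible pair $(\rho_{e_1}^{e_{12}}(u_1), 0) \in \mathbb{M}_{e_{12}} \prod_{\mathbb{M}_{\boldsymbol{t}}} \mathbb{M}_{e_{13}}$ produces a decomposition $u_1 = v_{12} + v_{13}$ with $v_{ij} \in \text{Ker }\rho_{e_1}^{e_{ij}}$. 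Next, I would apply 2-parameter $\psi$-surjectivity on the face $\{x_3 = s_3\}$ to lift the compatible pair $(v_{12}, 0) \in \mathbb{M}_{e_1} \prod_{\mathbb{M}_{e_{12}}} \mathbb{M}_{e_2}$ to $z_{12} \in \mathbb{M}_{\boldsymbol{s}}$, and similarly on $\{x_2 = s_2\}$ to lift $(v_{13}, 0) \in \mathbb{M}_{e_1} \prod_{\mathbb{M}_{e_{13}}} \mathbb{M}_{e_3}$ to $z_{13} \in \mathbb{M}_{\boldsymbol{s}}$. By construction $z_{12} \in \text{Ker }\rho_{\boldsymbol{s}}^{e_2}$, $z_{13} \in \text{Ker }\rho_{\boldsymbol{s}}^{e_3}$, and $\rho_{\boldsymbol{s}}^{e_1}(w - z_{12} - z_{13}) = u_1 - v_{12} - v_{13} = 0$, yielding the desired decomposition $w = (w - z_{12} - z_{13}) + z_{12} + z_{13}$.

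The main obstacle is the image step: one must organize the cube combinatorics so that three independently-constructed elements $a_1, a_2, a_3$ are seen to fit into a single family indexed by $\mathcal{P}_0(S)$ compatible with every arrow of the sub-cube. Once this is clear, the appeal to $\psi$-surjectivity is routine. It is worth noting that the kernel step uses only the first bullet of 3-parameter strong exactness (2-parameter strong exactness of the axis-aligned slices), whereas the image step genuinely invokes the 3-dimensional surjectivity condition on $\psi$, so the two halves of the definition are reflected in the two halves of the lemma.
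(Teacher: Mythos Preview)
Your argument is correct. The image half matches the paper's proof essentially line for line: pick preimages $a_{ij}$ at the three ``top'' faces, use 2-parameter strong exactness on each face $\{x_i=t_i\}$ to produce the $a_i$, check that the resulting tuple is a genuine element of the limit over $\mathcal{P}_0(S)$, and invoke surjectivity of $\psi$.

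Your kernel half, however, takes a genuinely different route from the paper. The paper works from the \emph{top} faces: it first builds auxiliary elements $\tilde e,\tilde f$ on the faces containing $\boldsymbol{t}$, then invokes the full 3-dimensional surjectivity of $\psi$ to pull back to $\tilde x\in \mathbb{M}_{\boldsymbol{s}}$, and repeats this to get $\hat x$; the decomposition is $x=(x-\tilde x)+\hat x+(\tilde x-\hat x)$. You instead work from the \emph{bottom} faces: you decompose the image $u_1$ of $w$ at $e_1$ using 2-parameter exactness on $\{x_1=t_1\}$, and then lift the two pieces back to $\boldsymbol{s}$ using 2-parameter exactness on the faces $\{x_3=s_3\}$ and $\{x_2=s_2\}$. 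This avoids the 3-dimensional $\psi$-condition entirely, confirming your closing remark that the kernel identity needs only the first bullet of Definition~3.1. The paper's proof does not make this economy visible, so your argument is slightly sharper on this point; the trade-off is that the paper's approach is more symmetric and perhaps generalizes to $n$ parameters with less bookkeeping.
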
    
    \begin{proof}
		Let the following commutative diagram satisfy 3-parameter strong exactness.
		We only need to prove $\text{Im }\rho_{X}^{Y}=\text{Im }\rho_{A}^{Y}\cap \text{Im }\rho_{B}^{Y}\cap \text{Im }\rho_{C}^{Y}$ and $\text{Ker }\rho_{X}^{Y}=\text{Ker }\rho_{X}^{D}+\text{Ker }\rho_{X}^{E}+\text{Ker }\rho_{X}^{F}$. 
		$$
		\xymatrix{
			&A\ar[rr]&&Y\\
			D\ar[ru]\ar[rr]&&B\ar[ru]&\\
			&E\ar[uu]|\hole \ar[rr]|\hole &&C\ar[uu]\\
			X\ar[rr]\ar[uu]\ar[ru]&&F\ar[uu]\ar[ru]&
		}
		$$
		
		(1) Obviously, $\text{Im }\rho_{X}^{Y}\subseteq \text{Im }\rho_{A}^{Y}\cap \text{Im }\rho_{B}^{Y}\cap \text{Im }\rho_{C}^{Y}$. If $a\in A$, $b\in B$ and $c\in C$ such that $\rho_{A}^{Y}(a)=\rho_{B}^{Y}(b)=\rho_{C}^{Y}(c)=y$, we may find out $d\in D$, $e\in E$ and $f\in F$ by the 2-parameter strong exactness. 
        Note that the construction of $\underset{T\in \mathcal{P}_0(S)}{\text{lim}}\mathcal{X} (T)$ and $\psi:\mathcal{X}(\emptyset) \to \underset{T\in \mathcal{P}_0(S)}{\text{lim}}\mathcal{X} (T)$ is surjective, we can find out $x\in X$ so that $\rho_{X}^{A}(x)=a$, $\rho_{X}^{B}(x)=b$ and $\rho_{X}^{C}(x)=c$. 
        Thus $\text{Im }\rho_{A}^{Y}\cap \text{Im }\rho_{B}^{Y}\cap \text{Im }\rho_{C}^{Y} \subseteq \text{Im }\rho_{X}^{Y}$. $\text{Im }\rho_{X}^{Y} = \text{Im }\rho_{A}^{Y}\cap \text{Im }\rho_{B}^{Y}\cap \text{Im }\rho_{C}^{Y}$.
		
		(2) We can directly obtain that $\text{Ker }\rho_{X}^{D}+\text{Ker }\rho_{X}^{E}+\text{Ker }\rho_{X}^{F} \subseteq \text{Ker }\rho_{X}^{Y}$.
		Let $x\in \text{Ker }\rho_{X}^{Y}$, and the image of $x$ at $A,B,C,D,E,F$ be $a,b,c,d,e,f$ respectively. 
		$$
		\xymatrix{
			&a\ar[rr]&&0\\
			d\ar[ru]\ar[rr]&&b\ar[ru]&\\
			&e\ar[uu]|\hole \ar[rr]|\hole &&c\ar[uu]\\
			x\ar[rr]\ar[uu]\ar[ru]&&f\ar[uu]\ar[ru]&
		}
		$$
		By the 2-parameter strong exactness, we can find out $\tilde{f} \in F$ and $\tilde{e}\in E$ such that $\rho_{E}^{A}(\tilde{e})=a$, $\rho_{E}^{C}(\tilde{e})=0$ and $\rho_{F}^{B}(\tilde{f})=b$, $\rho_{F}^{C}(\tilde{f})=0$. Then $\exists \tilde{x} \in X$ satisfies $\rho_{X}^{D}(\tilde{x})=d$, $\rho_{X}^{E}(\tilde{x})=\tilde{e}$ and $\rho_{X}^{F}(\tilde{x})=\tilde{f}$. So $x-\tilde{x} \in Ker\rho_{X}^{D}$. 
		$$
		\xymatrix{
			&a\ar[rr]&&0\\
			d\ar[ru]\ar[rr]&&b\ar[ru]&\\
			&\textcolor{red}{\tilde{e}}\ar[uu]|\hole \ar[rr]|\hole &&0\ar[uu]\\
			\textcolor{red}{\tilde{x}}\ar[rr]\ar[uu]\ar[ru]&&\textcolor{red}{\tilde{f}}\ar[uu]\ar[ru]&
		}
		$$
		Similarly, we can find out $\hat{d} \in D$ and $\hat{x}\in X$ by $0 \in A$, $b \in B$, $0 \in E$, $0 \in C$ and $\tilde{f} \in F$. 
        So $\hat{x}\in \text{Ker }\rho_{X}^{E}$. 
		$$
		\xymatrix{
			&0\ar[rr]&&0\\
			\textcolor{red}{\hat{d}}\ar[ru]\ar[rr]&&b\ar[ru]&\\
			&0\ar[uu]|\hole \ar[rr]|\hole &&0\ar[uu]\\
			\textcolor{red}{\hat{x}}\ar[rr]\ar[uu]\ar[ru]&&\tilde{f}\ar[uu]\ar[ru]&
		}
		$$
		Finally, we can easily prove that $\tilde{x}-\hat{x}\in \text{Ker } \rho_{X}^{F}$. 
		
		Thus $x=(x-\tilde{x})+\hat{x}+(\tilde{x}-\hat{x})$.
		So $\text{Ker }\rho_{X}^{Y} \subseteq \text{Ker }\rho_{X}^{D}+\text{Ker }\rho_{X}^{E}+\text{Ker }\rho_{X}^{F}$, then $\text{Ker }\rho_{X}^{Y} = \text{Ker }\rho_{X}^{D}+\text{Ker }\rho_{X}^{E}+\text{Ker }\rho_{X}^{F}$.
    \end{proof}
    \vspace{0.3cm}
    With the decomposition of $\text{Im }\rho_{\boldsymbol{s}}^{\boldsymbol{t}}$ and $\text{Ker }\rho_{\boldsymbol{s}}^{\boldsymbol{t}}$ in the Lemma\ref{3}, we can obtain the following crucial properties, which play an important role in finding submodules of $M$, which are block modules. 
    \vspace{0.3cm}
    \begin{lemma}\label{2}
		Let $\boldsymbol{s}\leq \boldsymbol{t} \in \mathbb{R}^3$ and $\clubsuit ,\spadesuit ,\bigstar \in \{+,-\}$. Then 
		
		$\rho_{\boldsymbol{s}}^{\boldsymbol{t}}(\text{Im}_{c_1,\boldsymbol{s}}^{\clubsuit}\cap \text{Im}_{c_2,\boldsymbol{s}}^{\spadesuit}\cap \text{Im}_{c_3,\boldsymbol{s}}^{\bigstar})=\text{Im}_{c_1,\boldsymbol{t}}^{\clubsuit}\cap \text{Im}_{c_2,\boldsymbol{t}}^{\spadesuit}\cap \text{Im}_{c_3,\boldsymbol{t}}^{\bigstar}$, 
		
		${(\rho_{\boldsymbol{s}}^{\boldsymbol{t}})}^{-1}(\text{Ker}_{c^1,\boldsymbol{t}}^{\clubsuit} + \text{Ker}_{c^2,\boldsymbol{t}}^{\spadesuit} + \text{Ker}_{c^3,\boldsymbol{t}}^{\bigstar})=\text{Ker}_{c^1,\boldsymbol{s}}^{\clubsuit} + \text{Ker}_{c^2,\boldsymbol{s}}^{\spadesuit} + \text{Ker}_{c^3,\boldsymbol{s}}^{\bigstar}$. 
    \end{lemma}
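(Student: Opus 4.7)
The plan is to reduce both assertions to elementary functoriality by first realizing the relevant ``infinite'' images and kernels as images and kernels of single structure maps, simultaneously at $\boldsymbol{s}$ and at $\boldsymbol{t}$, and then contracting the triple intersection (resp.\ triple sum) into a single image (resp.\ kernel) via Lemma~\ref{3}.

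For the image equality, for each $i \in \{1,2,3\}$ and the corresponding sign $\diamond_i \in \{\clubsuit,\spadesuit,\bigstar\}$, I would apply Lemma~\ref{1} at both $\boldsymbol{s}$ and $\boldsymbol{t}$ to extract a common witness $x_i \in c_i^{\diamond_i}$ with $x_i \leq s_i$ satisfying
\[
\text{Im}_{c_i,\boldsymbol{s}}^{\diamond_i} \;=\; \text{Im}\,\rho_{\boldsymbol{s}^{(i)}}^{\boldsymbol{s}}, \qquad \text{Im}_{c_i,\boldsymbol{t}}^{\diamond_i} \;=\; \text{Im}\,\rho_{\boldsymbol{t}^{(i)}}^{\boldsymbol{t}},
\]
where $\boldsymbol{s}^{(i)}$ and $\boldsymbol{t}^{(i)}$ denote $\boldsymbol{s}$ and $\boldsymbol{t}$ with the $i$-th coordinate replaced by $x_i$. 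Setting $\boldsymbol{p} = (x_1,x_2,x_3)$, Lemma~\ref{3} applied at $\boldsymbol{s}$ and $\boldsymbol{t}$ then yields
\[
\text{Im}\,\rho_{\boldsymbol{p}}^{\boldsymbol{s}} \;=\; \bigcap_{i=1}^{3}\text{Im}_{c_i,\boldsymbol{s}}^{\diamond_i}, \qquad \text{Im}\,\rho_{\boldsymbol{p}}^{\boldsymbol{t}} \;=\; \bigcap_{i=1}^{3}\text{Im}_{c_i,\boldsymbol{t}}^{\diamond_i}.
\]
Functoriality $\rho_{\boldsymbol{s}}^{\boldsymbol{t}}\circ\rho_{\boldsymbol{p}}^{\boldsymbol{s}} = \rho_{\boldsymbol{p}}^{\boldsymbol{t}}$ gives $\rho_{\boldsymbol{s}}^{\boldsymbol{t}}(\text{Im}\,\rho_{\boldsymbol{p}}^{\boldsymbol{s}}) = \text{Im}\,\rho_{\boldsymbol{p}}^{\boldsymbol{t}}$, which is the desired equality.

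The kernel equality is dual. For each $i$, choose a common witness $y_i \in c^{i,\diamond_i}$ with $y_i \geq t_i$ so that
\[
\text{Ker}_{c^i,\boldsymbol{s}}^{\diamond_i} \;=\; \text{Ker}\,\rho_{\boldsymbol{s}}^{\tilde{\boldsymbol{s}}^{(i)}}, \qquad \text{Ker}_{c^i,\boldsymbol{t}}^{\diamond_i} \;=\; \text{Ker}\,\rho_{\boldsymbol{t}}^{\tilde{\boldsymbol{t}}^{(i)}},
\]
where $\tilde{\boldsymbol{s}}^{(i)}$, $\tilde{\boldsymbol{t}}^{(i)}$ are obtained from $\boldsymbol{s}$, $\boldsymbol{t}$ by replacing the $i$-th coordinate with $y_i$. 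Setting $\boldsymbol{q} = (y_1,y_2,y_3)$, Lemma~\ref{3} gives $\text{Ker}\,\rho_{\boldsymbol{s}}^{\boldsymbol{q}} = \sum_i\text{Ker}_{c^i,\boldsymbol{s}}^{\diamond_i}$ and $\text{Ker}\,\rho_{\boldsymbol{t}}^{\boldsymbol{q}} = \sum_i\text{Ker}_{c^i,\boldsymbol{t}}^{\diamond_i}$. The result then drops out of the chain $u \in (\rho_{\boldsymbol{s}}^{\boldsymbol{t}})^{-1}(\text{Ker}\,\rho_{\boldsymbol{t}}^{\boldsymbol{q}}) \Longleftrightarrow \rho_{\boldsymbol{t}}^{\boldsymbol{q}}\,\rho_{\boldsymbol{s}}^{\boldsymbol{t}}(u) = 0 \Longleftrightarrow \rho_{\boldsymbol{s}}^{\boldsymbol{q}}(u) = 0 \Longleftrightarrow u \in \text{Ker}\,\rho_{\boldsymbol{s}}^{\boldsymbol{q}}$, using $\rho_{\boldsymbol{t}}^{\boldsymbol{q}}\circ\rho_{\boldsymbol{s}}^{\boldsymbol{t}} = \rho_{\boldsymbol{s}}^{\boldsymbol{q}}$.

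The principal obstacle is the common-witness step, which must be justified across the eight sign patterns. When $\diamond_i = +$, the two stabilization thresholds from Lemma~\ref{1} at $\boldsymbol{s}$ and $\boldsymbol{t}$ both lie in the upper set $c_i^+$ (resp.\ $c^{i,+}$), so the minimum (resp.\ a smaller element) of the two serves as a common witness; when $\diamond_i = -$, both thresholds lie in the lower set $c_i^-$ (resp.\ $c^{i,-}$), and their maximum works. The order constraints $x_i \leq s_i$ and $y_i \geq t_i$ needed for the structure maps to exist are automatic: since $s_i,t_i \in c_i^+\cap c^{i,-}$, any element of $c_i^-$ is strictly below $s_i$ and any element of $c^{i,+}$ is strictly above $t_i$, while for the ``+''-side (resp.\ ``--''-side) of $c_i$ (resp.\ $c^i$) the requirement is already built into the statement of Lemma~\ref{1}.
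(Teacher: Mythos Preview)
Your proposal is correct and follows essentially the same route as the paper: invoke Lemma~\ref{1} to pick a single common parameter $x_i$ (resp.\ $y_i$) realizing $\text{Im}_{c_i}^{\diamond_i}$ (resp.\ $\text{Ker}_{c^i}^{\diamond_i}$) simultaneously at $\boldsymbol{s}$ and $\boldsymbol{t}$, collapse the triple intersection/sum to a single image/kernel via Lemma~\ref{3}, and finish with functoriality of $\rho$. Your final paragraph on how to extract a common witness across the sign cases is in fact more explicit than the paper, which simply asserts the existence of such $x,y,z$.
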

    \begin{proof}
		(1) The Lemme\ref{1} tells us that there exist $x\leq s_1 \leq t_1$, $y\leq s_2 \leq t_2$ and $z\leq s_3 \leq t_3$ (possibly equal to $-\infty$) such that 
		\begin{equation*}
			\begin{aligned}
				\text{Im}_{c_1,\boldsymbol{s}}^{\clubsuit}=\text{Im }\rho_{(x,s_2,s_3)}^{s} \text{ and } \text{Im}_{c_1,\boldsymbol{t}}^{\clubsuit}=\text{Im }\rho_{(x,t_2,t_3)}^{\boldsymbol{t}}\\
				\text{Im}_{c_2,\boldsymbol{s}}^{\spadesuit}=\text{Im }\rho_{(s_1,y,s_3)}^{\boldsymbol{s}} \text{ and } \text{Im}_{c_2,\boldsymbol{t}}^{\spadesuit}=\text{Im }\rho_{(t_1,y,t_3)}^{\boldsymbol{t}}\\
				\text{Im}_{c_3,\boldsymbol{s}}^{\bigstar}=\text{Im }\rho_{(s_1,s_2,z)}^{\boldsymbol{s}} \text{ and } \text{Im}_{c_3,\boldsymbol{t}}^{\bigstar}=\text{Im }\rho_{(t_1,t_2,z)}^{\boldsymbol{t}}
			\end{aligned}
		\end{equation*}
		Then, we can directly compute 
		\begin{equation*}
			\begin{aligned}
				\text{Im}_{c_1,\boldsymbol{t}}^{\clubsuit}\cap \text{Im }_{c_2,\boldsymbol{t}}^{\beta}\cap \text{Im}_{c_3,\boldsymbol{t}}^{\bigstar} &=\text{Im }\rho_{(x,t_2,t_3)}^{\boldsymbol{t}} \cap \text{Im}\rho_{(t_1,y,t_3)}^{\boldsymbol{t}} \cap \text{Im }\rho_{(t_1,t_2,z)}^{\boldsymbol{t}} \\
				&= \text{Im }\rho_{(x,y,z)}^{\boldsymbol{t}} = \rho_{\boldsymbol{s}}^{\boldsymbol{t}}(\text{Im }\rho_{(x,y,z)}^{\boldsymbol{s}})\\ &=\rho_{\boldsymbol{s}}^{\boldsymbol{t}}(\text{Im }\rho_{(x,s_2,s_3)}^{s} \cap \text{Im }\rho_{(s_1,y,s_3)}^{\boldsymbol{s}} \cap \text{Im }\rho_{(s_1,s_2,z)}^{\boldsymbol{s}})\\
				&= \rho_{\boldsymbol{s}}^{\boldsymbol{t}}(\text{Im}_{c_1,\boldsymbol{s}}^{\alpha}\cap \text{Im}_{c_2,\boldsymbol{s}}^{\beta}\cap \text{Im}_{c_3,\boldsymbol{s}}^{\bigstar})
			\end{aligned}
		\end{equation*}
		(2) Similar to (1), we can find out $(x,y,z) \geq \boldsymbol{t} \geq \boldsymbol{s} \in \mathbb{R}^3$ (possibly $x,y,z$ equal to $+\infty$) such that 
		\begin{equation*}
			\begin{aligned}
				\text{Ker}_{c^1,\boldsymbol{s}}^{\clubsuit}=\text{Ker }\rho_{\boldsymbol{s}}^{(x,s_2,s_3)} \text{ and } \text{Ker}_{c^1,\boldsymbol{t}}^{\clubsuit}=\text{Ker }\rho_{\boldsymbol{t}}^{(x,t_2,t_3)}\\
				\text{Ker}_{c^2,\boldsymbol{s}}^{\beta}=\text{Ker}\rho_{\boldsymbol{s}}^{(s_1,y,s_3)} \text{ and } \text{Ker}_{c^2,\boldsymbol{t}}^{\beta}=\text{Ker }\rho_{\boldsymbol{t}}^{(t_1,y,t_3)}\\
				\text{Ker}_{c^3,\boldsymbol{s}}^{\bigstar}=\text{Ker }\rho_{\boldsymbol{s}}^{(s_1,s_2,z)} \text{ and } \text{Ker}_{c^3,\boldsymbol{t}}^{\bigstar}=\text{Ker }\rho_{\boldsymbol{t}}^{(t_1,t_2,z)}
			\end{aligned}
		\end{equation*}
		Then 
		\begin{equation*}
			\begin{aligned}
				&{(\rho_{\boldsymbol{s}}^{\boldsymbol{t}})}^{-1}(\text{Ker }\rho_{\boldsymbol{t}}^{(x,t_2,t_3)}+\text{Ker }\rho_{\boldsymbol{t}}^{(t_1,y,t_3)}+\text{Ker }\rho_{\boldsymbol{t}}^{(t_1,t_2,z)})\\
				=&{(\rho_{\boldsymbol{s}}^{\boldsymbol{t}})}^{-1}(\rho_{\boldsymbol{t}}^{(x,y,z)})=\text{Ker } \rho_{\boldsymbol{s}}^{(x,y,z)}\\
                =&\text{Ker }\rho_{\boldsymbol{s}}^{(x,s_2,s_3)}+\text{Ker }\rho_{s}^{(s_1,y,s_3)}+\text{Ker }\rho_{\boldsymbol{s}}^{(s_1,s_2,z)}
			\end{aligned}
		\end{equation*}
    \end{proof}
    \vspace{0.3cm}
    From the above Lemma, we may easily prove the following result. 
    \vspace{0.3cm}
    \begin{corollary}
		$$
		\rho_{\boldsymbol{s}}^{\boldsymbol{t}}(\text{Im}_{C,\boldsymbol{s}}^{\pm})=\text{Im}_{C,\boldsymbol{t}}^{\pm} \text{ and } {(\rho_{\boldsymbol{s}}^{\boldsymbol{t}})}^{-1}(\text{Ker}_{C,\boldsymbol{t}}^{\pm})=\text{Ker}_{C,\boldsymbol{s}}^{\pm}
		$$
    \end{corollary}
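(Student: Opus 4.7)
The plan is to reduce each case to Lemma \ref{2}---the axis-wise identities for the action of $\rho$ on intersections of images and preimage on sums of kernels---using the explicit formulas for $\text{Im}_{C,\boldsymbol{t}}^{\pm}$ and $\text{Ker}_{C,\boldsymbol{t}}^{\pm}$ stated right after Lemma \ref{1}. The identity $\rho_{\boldsymbol{s}}^{\boldsymbol{t}}(\text{Im}_{C,\boldsymbol{s}}^{+})=\text{Im}_{C,\boldsymbol{t}}^{+}$ is just the $(+,+,+)$ instance of Lemma \ref{2}, since $\text{Im}_{C,\boldsymbol{s}}^{+}$ is a triple intersection. For $\text{Im}_{C,\boldsymbol{s}}^{-}$, a sum of three triple intersections with exactly one $-$ sign in each summand, I would use linearity $\rho_{\boldsymbol{s}}^{\boldsymbol{t}}(A+B+D)=\rho_{\boldsymbol{s}}^{\boldsymbol{t}}(A)+\rho_{\boldsymbol{s}}^{\boldsymbol{t}}(B)+\rho_{\boldsymbol{s}}^{\boldsymbol{t}}(D)$ and then the three instances $(-,+,+)$, $(+,-,+)$, $(+,+,-)$ of Lemma \ref{2} to match each summand. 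Likewise $(\rho_{\boldsymbol{s}}^{\boldsymbol{t}})^{-1}(\text{Ker}_{C,\boldsymbol{t}}^{-})=\text{Ker}_{C,\boldsymbol{s}}^{-}$ is the $(-,-,-)$ instance of Lemma \ref{2}.

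The remaining identity $(\rho_{\boldsymbol{s}}^{\boldsymbol{t}})^{-1}(\text{Ker}_{C,\boldsymbol{t}}^{+})=\text{Ker}_{C,\boldsymbol{s}}^{+}$ is the main obstacle, because $\text{Ker}_{C,\boldsymbol{t}}^{+}=\text{Ker}_{C,\boldsymbol{t}}^{-}+\bigl(\text{Ker}_{c^1,\boldsymbol{t}}^{+}\cap\text{Ker}_{c^2,\boldsymbol{t}}^{+}\cap\text{Ker}_{c^3,\boldsymbol{t}}^{+}\bigr)$ mixes a sum with a triple intersection that Lemma \ref{2} does not treat, and $(\rho_{\boldsymbol{s}}^{\boldsymbol{t}})^{-1}$ does not distribute over sums in general. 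My plan is to collapse the intersection into a single kernel. Using Lemma \ref{1}, choose $\beta_i\in{c^i}^+\cup\{+\infty\}$ large enough that both $\text{Ker}_{c^i,\boldsymbol{s}}^{+}$ and $\text{Ker}_{c^i,\boldsymbol{t}}^{+}$ are realized as single kernels at the axis points carrying $\beta_i$ in the $i$-th slot, and set $p^\ast=(\beta_1,\beta_2,\beta_3)\notin C$. Then every element of $\text{Ker}_{C,\boldsymbol{t}}^{+}$ is killed by $\rho_{\boldsymbol{t}}^{p^\ast}$, because $p^\ast$ dominates every axis-wise representative in the four-term expression. Consequently, for any $v\in(\rho_{\boldsymbol{s}}^{\boldsymbol{t}})^{-1}(\text{Ker}_{C,\boldsymbol{t}}^{+})$ we obtain $\rho_{\boldsymbol{s}}^{p^\ast}(v)=0$, and Lemma \ref{3} applied on the cube $[\boldsymbol{s},p^\ast]$ splits $v=v_1+v_2+v_3$ with $v_i\in\text{Ker}_{c^i,\boldsymbol{s}}^{+}$.

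To refine this three-term splitting into the four-term form required by $\text{Ker}_{C,\boldsymbol{s}}^{+}=\text{Ker}_{C,\boldsymbol{s}}^{-}+K^+_{C,\boldsymbol{s}}$ with $K^+_{C,\boldsymbol{s}}=\text{Ker}_{c^1,\boldsymbol{s}}^{+}\cap\text{Ker}_{c^2,\boldsymbol{s}}^{+}\cap\text{Ker}_{c^3,\boldsymbol{s}}^{+}$, I would replay the red-arrow cube chase from the proof of Lemma \ref{3} inside $[\boldsymbol{s},p^\ast]$: on each face, 2-parameter strong exactness trades a summand in $\text{Ker}_{c^i,\boldsymbol{s}}^{+}$ for a contribution in $\text{Ker}_{c^i,\boldsymbol{s}}^{-}$ plus a residue lying in a deeper intersection, until after three corrections the residue sits in the full triple intersection $K^+_{C,\boldsymbol{s}}$. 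The reverse inclusion $\text{Ker}_{C,\boldsymbol{s}}^{+}\subseteq(\rho_{\boldsymbol{s}}^{\boldsymbol{t}})^{-1}(\text{Ker}_{C,\boldsymbol{t}}^{+})$ is a direct termwise check using the functoriality of $\rho$ together with the corresponding image-side equality. The technically delicate point is the compatibility of the $\beta_i$ between $\boldsymbol{s}$ and $\boldsymbol{t}$, which is handled by taking each $\beta_i$ large enough in ${c^i}^+\cup\{+\infty\}$ so that the stabilization guaranteed by Lemma \ref{1} holds at both basepoints simultaneously.
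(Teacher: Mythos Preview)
Your treatment of $\text{Im}_{C}^{\pm}$ and $\text{Ker}_{C}^{-}$ coincides with the paper's one-line argument: apply Lemma~\ref{2} termwise, using $f(U+V)=f(U)+f(V)$ for the sum-of-intersections in $\text{Im}_{C}^{-}$.

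For $\text{Ker}_{C}^{+}$ you diverge sharply. The paper's intended argument is the exact dual of the $\text{Im}_{C}^{-}$ case: regard $\text{Ker}_{C,\boldsymbol{t}}^{+}$ as the intersection of the three triple sums $\text{Ker}_{c^{i},\boldsymbol{t}}^{+}+\sum_{j\neq i}\text{Ker}_{c^{j},\boldsymbol{t}}^{-}$, apply $f^{-1}(U\cap V)=f^{-1}(U)\cap f^{-1}(V)$, and invoke Lemma~\ref{2} on each factor. That is all the hint ``$f(U+V)=f(U)+f(V)$ and $f^{-1}(U\cap V)=f^{-1}(U)\cap f^{-1}(V)$'' is pointing to; no cube chase is needed.

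Your alternative route has a genuine gap. After passing to $p^{\ast}$ and applying Lemma~\ref{3}, you only obtain $v\in\text{Ker}_{c^{1},\boldsymbol{s}}^{+}+\text{Ker}_{c^{2},\boldsymbol{s}}^{+}+\text{Ker}_{c^{3},\boldsymbol{s}}^{+}$. At that point you have discarded the hypothesis $\rho_{\boldsymbol{s}}^{\boldsymbol{t}}(v)\in\text{Ker}_{C,\boldsymbol{t}}^{+}$ in favour of the strictly weaker statement $\rho_{\boldsymbol{s}}^{p^{\ast}}(v)=0$, and from the latter alone the four-term conclusion $v\in\sum_{i}\text{Ker}_{c^{i},\boldsymbol{s}}^{-}+\bigcap_{i}\text{Ker}_{c^{i},\boldsymbol{s}}^{+}$ does not follow: the lattice identity $\sum_{i}A_{i}^{+}=\sum_{i}A_{i}^{-}+\bigcap_{i}A_{i}^{+}$ is false for general subspaces with $A_{i}^{-}\subseteq A_{i}^{+}$. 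Your ``replay the red-arrow cube chase'' sketch does not explain how the lost information is recovered, and the face-by-face trading you describe would at best produce terms in $\text{Ker}_{c^{i},\boldsymbol{s}}^{-}$ plus residues in pairwise intersections of the $\text{Ker}_{c^{j},\boldsymbol{s}}^{+}$, not in the full triple intersection. If you want to salvage this direction, you must reinsert the original hypothesis on $\rho_{\boldsymbol{s}}^{\boldsymbol{t}}(v)$ into the chase---but at that point the intersection-of-sums rewriting is both shorter and cleaner.
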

    \begin{proof}
		We only need to pay attention to the facts that $f(U+V)=f(U)+f(V)$ and $f^{-1}(U\cap V)=f^{-1}(U) \cap f^{-1}(V)$. 
    \end{proof}
    \vspace{0.3cm}
    The following lemma states the relation between $\text{Ker}$ and $\text{Im}$. 
    \vspace{0.3cm}
    \begin{lemma}\label{7}
		If a 3-parameter persistence module $\mathbb{M}$ is \textbf{pfd} and satisfies the 3-parameter strong exactness, then
		$$\text{Ker}_{c^1,\boldsymbol{t}}^{\clubsuit}\subseteq \text{Im}_{c_2,\boldsymbol{t}}^{\spadesuit} \cap \text{Im}_{c_3,\boldsymbol{t}}^{\bigstar}$$
		in which
		\begin{itemize}
			\item if ${c^1}^+ \neq \emptyset$, then $\clubsuit=+$, else $\clubsuit=-$;
			\item if ${c_2}^- \neq \emptyset$, then $\spadesuit=-$, else $\spadesuit=+$;
			\item if ${c_3}^- \neq \emptyset$, then $\bigstar=-$, else $\bigstar=+$.
		\end{itemize} 
		
		Similarly, we have 
		$\text{Ker}_{c^2,\boldsymbol{t}}^{\clubsuit}\subseteq \text{Im}_{c_1,\boldsymbol{t}}^{\spadesuit} \cap \text{Im}_{c_3,\boldsymbol{t}}^{\bigstar}$ and $\text{Ker}_{c^3,\boldsymbol{t}}^{\clubsuit}\subseteq \text{Im}_{c_1,\boldsymbol{t}}^{\spadesuit} \cap \text{Im}_{c_2,\boldsymbol{t}}^{\bigstar}$. 
    \end{lemma}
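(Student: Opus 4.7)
The plan is to reduce the claim to an application of 2-parameter strong exactness in a well-chosen coordinate slice. By the symmetry between axes~$2$ and~$3$, it suffices to prove the first inclusion $\text{Ker}_{c^1,\boldsymbol{t}}^{\clubsuit} \subseteq \text{Im}_{c_2,\boldsymbol{t}}^{\spadesuit}$; the inclusion into $\text{Im}_{c_3,\boldsymbol{t}}^{\bigstar}$ follows by the same argument applied to the slice $\mathbb{R}\times\{t_2\}\times\mathbb{R}$ in place of $\mathbb{R}\times\mathbb{R}\times\{t_3\}$, and the companion statements for $\text{Ker}_{c^2}$ and $\text{Ker}_{c^3}$ follow by permuting axes.

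First I would invoke Lemma~\ref{1} to replace the abstract intersection/sum defining $\text{Ker}_{c^1,\boldsymbol{t}}^{\clubsuit}$ and $\text{Im}_{c_2,\boldsymbol{t}}^{\spadesuit}$ by the kernel and image of a single transition map. This produces a parameter $x\geq t_1$ (possibly $+\infty$ in the extended module, drawn from ${c^1}^+$ or ${c^1}^-$ according to whether $\clubsuit$ is $+$ or $-$) such that $\text{Ker}_{c^1,\boldsymbol{t}}^{\clubsuit} = \text{Ker}\,\rho_{\boldsymbol{t}}^{(x,t_2,t_3)}$, and a parameter $y\leq t_2$ (possibly $-\infty$, drawn from ${c_2}^-$ or ${c_2}^+$ according to $\spadesuit$) such that $\text{Im}_{c_2,\boldsymbol{t}}^{\spadesuit} = \text{Im}\,\rho_{(t_1,y,t_3)}^{\boldsymbol{t}}$. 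The pfd hypothesis is precisely what guarantees that these decreasing intersections and increasing sums stabilise at such single-map representatives.

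The heart of the argument is then to apply 2-parameter strong exactness to the square
$$
\xymatrix{
\mathbb{M}_{(t_1,t_2,t_3)} \ar[r]^{\rho_{\boldsymbol{t}}^{(x,t_2,t_3)}} & \mathbb{M}_{(x,t_2,t_3)}\\
\mathbb{M}_{(t_1,y,t_3)} \ar[u]^{\rho_{(t_1,y,t_3)}^{\boldsymbol{t}}} \ar[r] & \mathbb{M}_{(x,y,t_3)} \ar[u]
}
$$
lying in the slice $\mathbb{M}|_{\mathbb{R}\times\mathbb{R}\times\{t_3\}}$, which is 2-parameter strongly exact by the first clause of the definition of 3-parameter strong exactness. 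For any $v\in\text{Ker}_{c^1,\boldsymbol{t}}^{\clubsuit}$, the identity $\rho_{\boldsymbol{t}}^{(x,t_2,t_3)}(v)=0$ says exactly that $(v,0)\in\mathbb{M}_{(t_1,t_2,t_3)}\oplus\mathbb{M}_{(x,y,t_3)}$ agrees in $\mathbb{M}_{(x,t_2,t_3)}$, i.e.\ it lies in the pullback of that square. By the ``$f$ surjective'' reformulation of 2-parameter strong exactness (the unlabelled equivalence lemma earlier in Section~3), the canonical map from $\mathbb{M}_{(t_1,y,t_3)}$ onto that pullback is surjective, so some $w\in\mathbb{M}_{(t_1,y,t_3)}$ lifts $(v,0)$. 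In particular $\rho_{(t_1,y,t_3)}^{\boldsymbol{t}}(w)=v$, so $v\in\text{Im}\,\rho_{(t_1,y,t_3)}^{\boldsymbol{t}}=\text{Im}_{c_2,\boldsymbol{t}}^{\spadesuit}$, as required.

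The main anticipated obstacle is not conceptual but sign bookkeeping: the eight possible configurations of $(\clubsuit,\spadesuit,\bigstar)$ need to be matched with the correct invocation of Lemma~\ref{1}, and one must check that the extended-module edge cases (where, for instance, $x=+\infty$ so $\mathbb{M}_{(x,t_2,t_3)}=0$, or $y=-\infty$ so $\mathbb{M}_{(t_1,y,t_3)}=0$) never cause the pullback argument to force $v=0$ prematurely. This is precisely what the pfd stabilisation built into Lemma~\ref{1} rules out: it always furnishes a finite representative at which the relevant kernel or image has already reached its limiting value, so the square used above is nondegenerate whenever the target subspace is nonzero.
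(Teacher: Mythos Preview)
Your proposal is correct and follows essentially the same approach as the paper: both invoke Lemma~\ref{1} to replace the kernel and image subspaces by a single transition map, then apply 2-parameter strong exactness in the slice $\mathbb{M}|_{\mathbb{R}\times\mathbb{R}\times\{t_3\}}$ to exactly the same commutative square, producing a common preimage of $v$ and $0\in\mathbb{M}_{(x,y,t_3)}$. The only minor remark is that your closing concern about $\pm\infty$ edge cases is largely moot: the sign assignments $\clubsuit,\spadesuit,\bigstar$ in the statement are chosen precisely so that the relevant half of each cut is nonempty, and hence Lemma~\ref{1} already furnishes finite representatives $x$ and $y$ in every case that actually arises.
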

    \begin{proof}
		We only prove $\text{Ker}_{c^1,\boldsymbol{t}}^{-}\subseteq \text{Im}_{c_2,\boldsymbol{t}}^{+}\cap \text{Im}_{c_3,\boldsymbol{t}}^{+}$, others are similar to $\text{Ker}_{c^1,\boldsymbol{t}}^{-}\subseteq \text{Im}_{c_2,\boldsymbol{t}}^{+}\cap \text{Im}_{c_3,\boldsymbol{t}}^{+}$.
		
		Let $\boldsymbol{t}=(t_1,t_2,t_3) \in \mathbb{R}^3$.
		From the Lemma\ref{1}, we can find out $x\in {c^1}^-$ and $y\in {c_2}^+$ such that $\text{Ker}_{c^1,\boldsymbol{t}}^{-}=\text{Ker } \rho_{\boldsymbol{t}}^{(x,t_2,t_3)}$ and $\text{Im}_{c_2,\boldsymbol{t}}^{+}=\text{Im } \rho_{(t_1,y,t_3)}^{\boldsymbol{t}}$. 
		Because $\mathbb{M}|_{\mathbb{R}\times\mathbb{R} \times \{t_3\}}$ satisfies 2-parameter strong exactness, we may consider following commutative diagram
		$$
		\xymatrix{
			\mathbb{M}_{(t_1,t_2,t_3)} \ar[r] & \mathbb{M}_{(x,t_2,t_3)}\\
			\mathbb{M}_{(t_1,y,t_3)} \ar[r] \ar[u] & \mathbb{M}_{(x,y,t_3)} \ar[u]\\
		}
		$$
		For any $\alpha \in \text{Ker}_{c^1,\boldsymbol{t}}^{-}$, we can find out a common antecedent $\beta \in \mathbb{M}_{(t_1,y,t_3)}$ with $0 \in \mathbb{M}_{(x,y,t_3)}$, then $\alpha \in \text{Im}_{c_2,\boldsymbol{t}}^{+}$. So $\text{Ker}_{c^1,\boldsymbol{t}}^{-} \subseteq \text{Im}_{c_2,\boldsymbol{t}}^{+}$. 
		
		Similarly, we easily prove $\text{Ker}_{c^1,\boldsymbol{t}}^{-} \subseteq \text{Im}_{c_3,\boldsymbol{t}}^{+}$. Therefore, $\text{Ker}_{c^1,\boldsymbol{t}}^{-}\subseteq \text{Im}_{c_2,\boldsymbol{t}}^{+}\cap \text{Im}_{c_3,\boldsymbol{t}}^{+}$. 
    \end{proof}
    \vspace{0.3cm}

    \subsection{Find Block Submodules in $\mathbb{M}$}
    Next, we will try to find out all the block modules $\Bbbk_B$, which are submodules of $\mathbb{M}$. 
    If $\mathbb{M}$ may be decomposed as a direct sum of block modules $\Bbbk_B$, the block modules $\Bbbk_B$ are exactly the submodules of $\mathbb{M}$, which the elements are born at the birth boundary of $B$ and die at death boundary of $B$. 

    For any cuboid $C=({c_1}^+ \cap {c^1}^-)\times({c_2}^+ \cap {c^2}^-)\times({c_3}^+ \cap {c^3}^-)$, we define $V_{C,\boldsymbol{t}}^{+}=\text{Im}_{C,\boldsymbol{t}}^{+}\cap \text{Ker}_{C,\boldsymbol{t}}^{+}$ and $V_{C,\boldsymbol{t}}^{-}=\text{Im}_{C,\boldsymbol{t}}^{+}\cap \text{Ker}_{C,\boldsymbol{t}}^{-}+\text{Im}_{C,\boldsymbol{t}}^{-}\cap \text{Ker}_{C,\boldsymbol{t}}^{+}$. 
    Obviously, $V_{C,\boldsymbol{t}}^{-}\subseteq V_{C,\boldsymbol{t}}^{+}$.

    According to our supposition, $V_{C,\boldsymbol{t}}^{+}/V_{C,\boldsymbol{t}}^{-}$ is isomorphic to $(\Bbbk_C)_t$, and combined with the previous results, we can obtain the following lemma. 
    \vspace{0.3cm}
    \begin{lemma}
		If a 3-parameter persistence module $\mathbb{M}$ is \textbf{pfd} and satisfies the 3-parameter strong exactness, then 
		$\rho_{\boldsymbol{s}}^{\boldsymbol{t}}(V_{C,\boldsymbol{s}}^{\pm})=V_{C,\boldsymbol{t}}^{\pm}$ and the induced morphism $\bar{\rho_{\boldsymbol{s}}^{\boldsymbol{t}}}:V_{C,\boldsymbol{s}}^{+}/V_{C,\boldsymbol{s}}^{-} \to V_{C,\boldsymbol{t}}^{+}/V_{C,\boldsymbol{t}}^{+}$ is an isomorphism.
    \end{lemma}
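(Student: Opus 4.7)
The plan is to verify the lemma in three phases, drawing entirely on the preceding corollary and Lemma \ref{3}. First I would establish $\rho_{\boldsymbol{s}}^{\boldsymbol{t}}(V_{C,\boldsymbol{s}}^{+}) = V_{C,\boldsymbol{t}}^{+}$. The forward inclusion is immediate: if $v \in \text{Im}_{C,\boldsymbol{s}}^{+} \cap \text{Ker}_{C,\boldsymbol{s}}^{+}$, then $\rho_{\boldsymbol{s}}^{\boldsymbol{t}}(v) \in \text{Im}_{C,\boldsymbol{t}}^{+}$, and since $\text{Ker}_{C,\boldsymbol{s}}^{+} = (\rho_{\boldsymbol{s}}^{\boldsymbol{t}})^{-1}(\text{Ker}_{C,\boldsymbol{t}}^{+})$, also $\rho_{\boldsymbol{s}}^{\boldsymbol{t}}(v) \in \text{Ker}_{C,\boldsymbol{t}}^{+}$. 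For the reverse, given $w \in V_{C,\boldsymbol{t}}^{+}$, pick a preimage $v \in \text{Im}_{C,\boldsymbol{s}}^{+}$ by the corollary; since $w \in \text{Ker}_{C,\boldsymbol{t}}^{+}$, the preimage identity forces $v \in \text{Ker}_{C,\boldsymbol{s}}^{+}$, so $v \in V_{C,\boldsymbol{s}}^{+}$ as required.

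Next I would verify $\rho_{\boldsymbol{s}}^{\boldsymbol{t}}(V_{C,\boldsymbol{s}}^{-}) = V_{C,\boldsymbol{t}}^{-}$ by treating the two summands in the definition of $V^{-}$ separately; the same pair of identities handles both directions, since a preimage of an element of $\text{Im}_{C,\boldsymbol{t}}^{-} \cap \text{Ker}_{C,\boldsymbol{t}}^{+}$ can be chosen in $\text{Im}_{C,\boldsymbol{s}}^{-}$ and then automatically lands in $\text{Ker}_{C,\boldsymbol{s}}^{+}$, and symmetrically for the other summand. Together these two identities already yield surjectivity of $\bar{\rho_{\boldsymbol{s}}^{\boldsymbol{t}}}$ on quotients.

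The real work, and the step I expect to be the main obstacle, is injectivity. My plan is: take $v \in V_{C,\boldsymbol{s}}^{+}$ with $\rho_{\boldsymbol{s}}^{\boldsymbol{t}}(v) \in V_{C,\boldsymbol{t}}^{-}$, decompose $\rho_{\boldsymbol{s}}^{\boldsymbol{t}}(v) = a + b$ with $a \in \text{Im}_{C,\boldsymbol{t}}^{+} \cap \text{Ker}_{C,\boldsymbol{t}}^{-}$ and $b \in \text{Im}_{C,\boldsymbol{t}}^{-} \cap \text{Ker}_{C,\boldsymbol{t}}^{+}$, and use the previous step to lift these to $\tilde{a} \in \text{Im}_{C,\boldsymbol{s}}^{+} \cap \text{Ker}_{C,\boldsymbol{s}}^{-}$ and $\tilde{b} \in \text{Im}_{C,\boldsymbol{s}}^{-} \cap \text{Ker}_{C,\boldsymbol{s}}^{+}$. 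Setting $u = v - \tilde{a} - \tilde{b} \in \text{Ker}\,\rho_{\boldsymbol{s}}^{\boldsymbol{t}}$, the problem reduces to placing $u$ in $V_{C,\boldsymbol{s}}^{-}$.

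This last placement is where Lemma \ref{3} becomes indispensable. That lemma decomposes $\text{Ker}\,\rho_{\boldsymbol{s}}^{\boldsymbol{t}}$ as $\text{Ker}\,\rho_{\boldsymbol{s}}^{(t_1,s_2,s_3)} + \text{Ker}\,\rho_{\boldsymbol{s}}^{(s_1,t_2,s_3)} + \text{Ker}\,\rho_{\boldsymbol{s}}^{(s_1,s_2,t_3)}$. Since $\boldsymbol{s}, \boldsymbol{t} \in C$ forces $t_i \in {c^i}^{-}$ with $t_i \geq s_i$, each axis-aligned summand sits inside the corresponding $\text{Ker}_{c^i,\boldsymbol{s}}^{-}$ directly from its definition, so $\text{Ker}\,\rho_{\boldsymbol{s}}^{\boldsymbol{t}} \subseteq \text{Ker}_{C,\boldsymbol{s}}^{-}$. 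Coupled with the manifest fact that $u \in \text{Im}_{C,\boldsymbol{s}}^{+}$ (a sum of terms in $\text{Im}_{C,\boldsymbol{s}}^{+}$ and $\text{Im}_{C,\boldsymbol{s}}^{-} \subseteq \text{Im}_{C,\boldsymbol{s}}^{+}$), this places $u$ in $\text{Im}_{C,\boldsymbol{s}}^{+} \cap \text{Ker}_{C,\boldsymbol{s}}^{-} \subseteq V_{C,\boldsymbol{s}}^{-}$, whence $v \in V_{C,\boldsymbol{s}}^{-}$ and injectivity follows.
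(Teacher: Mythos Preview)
Your proof is correct and follows the paper's overall strategy (lift from $\text{Im}^{+}$ for surjectivity, decompose and lift for injectivity), but your injectivity step is a bit more elaborate than necessary. The paper lifts only the $\text{Im}^{-}\cap\text{Ker}^{+}$ summand $b$ to $\tilde{b}\in\text{Im}_{C,\boldsymbol{s}}^{-}\cap\text{Ker}_{C,\boldsymbol{s}}^{+}$, and then observes that $v-\tilde{b}$ maps into $\text{Ker}_{C,\boldsymbol{t}}^{-}$, so $v-\tilde{b}\in(\rho_{\boldsymbol{s}}^{\boldsymbol{t}})^{-1}(\text{Ker}_{C,\boldsymbol{t}}^{-})=\text{Ker}_{C,\boldsymbol{s}}^{-}$ directly from the corollary; combined with $v-\tilde{b}\in\text{Im}_{C,\boldsymbol{s}}^{+}$ this finishes. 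By lifting both summands you create a genuine kernel element $u$ and then invoke Lemma~\ref{3} to place $\ker\rho_{\boldsymbol{s}}^{\boldsymbol{t}}$ inside $\text{Ker}_{C,\boldsymbol{s}}^{-}$; this works, but note that the same containment already follows from the corollary applied to $0\in\text{Ker}_{C,\boldsymbol{t}}^{-}$, so the appeal to Lemma~\ref{3} is not actually needed here.
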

    \begin{proof}
		We can easily prove that 
		\begin{equation*}
			\begin{aligned}
				\rho_{\boldsymbol{s}}^{\boldsymbol{t}}(V_{C,\boldsymbol{s}}^{+})&=\rho_{\boldsymbol{s}}^{\boldsymbol{t}}(\text{Im}_{C,\boldsymbol{s}}^{+}\cap \text{Ker}_{C,\boldsymbol{s}}^{+})\\
				&\subseteq \rho_{\boldsymbol{s}}^{\boldsymbol{t}}(\text{Im}_{C,\boldsymbol{s}}^{+}) \cap \rho_{\boldsymbol{s}}^{\boldsymbol{t}}(\text{Ker}_{C,\boldsymbol{s}}^{+})\\
				&= \text{Im}_{C,\boldsymbol{t}}^{+}\cap \text{Ker}_{C,\boldsymbol{t}}^{+}=V_{C,\boldsymbol{t}}^{+},
			\end{aligned}
		\end{equation*}
		\begin{equation*}
			\begin{aligned}
				\rho_{\boldsymbol{s}}^{\boldsymbol{t}}(V_{C,\boldsymbol{s}}^{-})&=\rho_{\boldsymbol{s}}^{\boldsymbol{t}}(\text{Im}_{C,\boldsymbol{s}}^{+}\cap \text{Ker}_{C,\boldsymbol{s}}^{-}+\text{Im}_{C,\boldsymbol{s}}^{-}\cap \text{Ker}_{C,\boldsymbol{s}}^{+})\\
				&\subseteq \rho_{\boldsymbol{s}}^{\boldsymbol{t}}(\text{Im}_{C,\boldsymbol{s}}^{+})\cap\rho_{\boldsymbol{s}}^{\boldsymbol{t}}(\text{Ker}_{C,\boldsymbol{s}}^{-}) + \rho_{\boldsymbol{s}}^{\boldsymbol{t}}(\text{Im}_{C,\boldsymbol{s}}^{-})\cap \rho_{\boldsymbol{s}}^{\boldsymbol{t}}(\text{Ker}_{C,\boldsymbol{s}}^{+})\\
				&= \text{Im}_{C,\boldsymbol{t}}^{+}\cap \text{Ker}_{C,\boldsymbol{t}}^{-} + \text{Im}_{C,\boldsymbol{t}}^{-}\cap \text{Ker}_{C,\boldsymbol{s}}^{+}=V_{C,\boldsymbol{t}}^{-}.
			\end{aligned}
		\end{equation*}
		So we have $\rho_{\boldsymbol{s}}^{\boldsymbol{t}}(V_{C,\boldsymbol{s}}^{\pm})\subseteq V_{C,\boldsymbol{t}}^{\pm}$ and $\bar{\rho_{\boldsymbol{s}}^{\boldsymbol{t}}}:V_{R,\boldsymbol{s}}^{+}/V_{R,\boldsymbol{s}}^{-} \to V_{R,\boldsymbol{t}}^{+}/V_{R,\boldsymbol{t}}^{+}$. 
		Sequently, we prove that $\bar{\rho_{\boldsymbol{s}}^{\boldsymbol{t}}}$ is injective and surjective. 
		
		Surjectivity: For any $\beta \in V_{C,\boldsymbol{t}}^{+}=\text{Im}_{C,\boldsymbol{t}}^{+}\cap \text{Ker}_{C,\boldsymbol{t}}^{+}$, we can find out $\alpha \in \text{Im}_{C,\boldsymbol{s}}^{+}$ such that $\beta=\rho_{\boldsymbol{s}}^{\boldsymbol{t}}(\alpha)$. 
        Note that $\alpha \in {(\rho_{\boldsymbol{s}}^{\boldsymbol{t}})}^{-1}(\beta) \subseteq {(\rho_{\boldsymbol{s}}^{\boldsymbol{t}})}^{-1}(\text{Ker}_{C,\boldsymbol{t}}^{+})= \text{Ker}_{C,\boldsymbol{s}}^{+}$, then $\alpha \in V_{C,\boldsymbol{s}}^{+}$. 
        Thus $\bar{\rho_{\boldsymbol{s}}^{\boldsymbol{t}}}$ is surjective.
		
		Injectivity: Let $\beta =\rho_{\boldsymbol{s}}^{\boldsymbol{t}}(\alpha) \in V_{R,\boldsymbol{t}}^{-}$ in which $\alpha \in V_{C,\boldsymbol{s}}^{+}$. We have $\beta=\beta_1+\beta_2$ with $\beta_1 \in \text{Im}_{C,\boldsymbol{t}}^{-}\cap \text{Ker}_{C,\boldsymbol{t}}^{+}$ and $\beta_2 \in \text{Im}_{C,\boldsymbol{t}}^{+}\cap \text{Ker}_{C,\boldsymbol{t}}^{-}$. 
		By the same argument as before, $\beta_1=\rho_{\boldsymbol{s}}^{\boldsymbol{t}}(\alpha_1)$ for some $\alpha_1 \in \text{Im}_{C,\boldsymbol{s}}^{-}\cap \text{Ker}_{C,\boldsymbol{s}}^{+}$. 
		Because $\rho_{\boldsymbol{s}}^{\boldsymbol{t}}(\alpha-\alpha_1)=\beta_2 \in \text{Ker}_{C,\boldsymbol{t}}^{-}$, then $\alpha - \alpha_1 \in \text{Ker}_{C,\boldsymbol{s}}^{-}$. 
		Note that $\alpha, \alpha_1 \in \text{Im}_{C,\boldsymbol{s}}^{+}$, then $\alpha-\alpha_1 \in \text{Im}_{C,\boldsymbol{s}}^{+}$. 
        So $\alpha \in V_{C,\boldsymbol{s}}^{-}$ and $\bar{\rho_{\boldsymbol{s}}^{\boldsymbol{t}}}$ is injective. 
		It implies that $\bar{\rho_{\boldsymbol{s}}^{\boldsymbol{t}}}(V_{C,\boldsymbol{s}}^{-})=V_{C,\boldsymbol{t}}^{-}$. 
    \end{proof}
    \vspace{0.3cm}
    The above lemma tells us that we can find out the vectors that exactly live in the cuboid $C$ by using $V_{C,t}^{+}/V_{C,t}^{-}$. 
    However, we do not want to the vector space $V_{C,\boldsymbol{t}}^{+}/V_{C,\boldsymbol{t}}^{-}$ to depend on the selection of position $\boldsymbol{t}$. 
    So we need to define $\mathcal{CF}_C(\mathbb{M}):=\underset{\boldsymbol{t}\in C}{\underset{\longleftarrow}{\text{lim}}} V_{C,\boldsymbol{t}}^{+}/V_{C,\boldsymbol{t}}^{-}$.     
    The counting functor $\mathcal{CF}$ plays a central role in the decomposition of persistence modules. 
    Specifically, it is an additive functor\cite{cochoy2020decomposition} that determines the multiplicity of the summand $\Bbbk_C$ in the decomposition of the module $\mathbb{M}$ into a direct sum. 
    \vspace{0.3cm}
    \begin{lemma}
        Let $\mathbb{M}$ be \textbf{pfd} and decompose into a direct sum of cuboid modules. 
        For any cuboid $C$, the dimension of the vector space $\mathcal{CF}_C(\mathbb{M})$ precisely equals the multiplicity of the summand $\Bbbk_C$ in the decomposition of $\mathbb{M}$ into a direct sum. 
    \end{lemma}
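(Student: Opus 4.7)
The plan is to reduce the computation of $\mathcal{CF}_C(\mathbb{M})$ to contributions from each summand via additivity. Since $\mathcal{CF}$ is an additive functor (as cited from Cochoy--Oudot), writing $\mathbb{M} \cong \bigoplus_{C'} \Bbbk_{C'}^{n_{C'}}$ yields
\[
\mathcal{CF}_C(\mathbb{M}) \cong \bigoplus_{C'} \mathcal{CF}_C(\Bbbk_{C'})^{n_{C'}},
\]
so it suffices to establish two pointwise facts: (i) $\mathcal{CF}_C(\Bbbk_C) \cong \Bbbk$, and (ii) $\mathcal{CF}_C(\Bbbk_{C'}) = 0$ for every cuboid $C' \neq C$. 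The conclusion $\dim \mathcal{CF}_C(\mathbb{M}) = n_C$ then drops out immediately.

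For (i), I would evaluate $V_{C,\boldsymbol{t}}^{\pm}$ in $\Bbbk_C$ at an arbitrary $\boldsymbol{t} \in C$. Since $\rho_{\boldsymbol{s}}^{\boldsymbol{t}}$ is the identity on $\Bbbk$ when $\boldsymbol{s},\boldsymbol{t} \in C$ and vanishes otherwise, a short check gives $\text{Im}_{C,\boldsymbol{t}}^{+} = \Bbbk$, $\text{Im}_{C,\boldsymbol{t}}^{-} = 0$, $\text{Ker}_{C,\boldsymbol{t}}^{+} = \Bbbk$, and $\text{Ker}_{C,\boldsymbol{t}}^{-} = 0$, so $V_{C,\boldsymbol{t}}^{+}/V_{C,\boldsymbol{t}}^{-} \cong \Bbbk$. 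The induced transition maps $\bar{\rho}_{\boldsymbol{s}}^{\boldsymbol{t}}$ of the inverse system are identities $\Bbbk \to \Bbbk$, so the limit is $\Bbbk$.

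For (ii), when $\boldsymbol{t} \in C \setminus C'$ one has $(\Bbbk_{C'})_{\boldsymbol{t}} = 0$ and the quotient vanishes trivially. The substantive case is $\boldsymbol{t} \in C \cap C'$: since $C \neq C'$ some cut of $C$ differs from the corresponding cut of $C'$, and by the symmetric roles of the six cuts I reduce to the case $c_1 \neq d_1$, where $d_1$ is the first birth cut of $C'$. Cuts on $\mathbb{R}$ are totally ordered, so either $c_1^- \subsetneq d_1^-$ or $d_1^- \subsetneq c_1^-$. In the first subcase, pick $x \in c_1^+ \cap d_1^-$ (which automatically satisfies $x < t_1$ and $x \in c^{1-}$ by downward closedness); then $\boldsymbol{s} := (x,t_2,t_3) \in C \setminus C'$, so the zero map $\rho_{\boldsymbol{s}}^{\boldsymbol{t}}$ enters the intersection defining $\text{Im}_{C,\boldsymbol{t}}^{+}$, forcing $V_{C,\boldsymbol{t}}^{+} = 0$. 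In the second subcase, pick $x \in c_1^- \cap d_1^+$, so that $\boldsymbol{s} \notin C$ but $\boldsymbol{s} \in C'$ contributes an identity map to the sum defining $\text{Im}_{C,\boldsymbol{t}}^{-}$, forcing $\text{Im}_{C,\boldsymbol{t}}^{-} = \text{Im}_{C,\boldsymbol{t}}^{+}$ and hence $V_{C,\boldsymbol{t}}^{-} = V_{C,\boldsymbol{t}}^{+}$. The death cuts $c^i$ are handled dually, by placing $\boldsymbol{s}$ above $\boldsymbol{t}$ and reading the argument through the sum $K^-_{C,\boldsymbol{t}}$ and the intersection $K^+_{C,\boldsymbol{t}}$. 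In every subcase the pointwise quotient vanishes for all $\boldsymbol{t} \in C$, so the inverse limit does too.

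The main obstacle is the completeness of the case analysis in (ii): in the 3-parameter setting there are six cuts per cuboid and hence many a priori ways for $C'$ to deviate from $C$. The simplifying observation is that a single mismatched cut already annihilates the pointwise quotient, so one only has to enumerate the six types of single-cut discrepancy, and the death cuts reduce to the birth cuts by the formal image--kernel duality already exploited in Lemma~\ref{3}. Once this enumeration is in hand, the lemma follows purely by additivity of $\mathcal{CF}$.
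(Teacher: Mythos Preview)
Your proposal is correct and follows essentially the same route as the paper: reduce by additivity of $\mathcal{CF}$ to a single summand $\Bbbk_{C'}$, then show the pointwise quotient is $\Bbbk$ when $C'=C$ and vanishes when $C'\neq C$ by locating a single mismatched cut and arguing via $\mathrm{Im}^{\pm}$ (or dually $\mathrm{Ker}^{\pm}$). The only difference is cosmetic: you explicitly split the mismatched-birth-cut case into the two subcases $c_1^- \subsetneq d_1^-$ and $d_1^- \subsetneq c_1^-$, whereas the paper collapses both into the single observation $\mathrm{Im}_{c_1,\boldsymbol{t}}^{+}(\Bbbk_{C'}) = \mathrm{Im}_{c_1,\boldsymbol{t}}^{-}(\Bbbk_{C'})$.
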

    \begin{proof}
		Because $\mathcal{CF}$ is an additive functor, the proof can be reduced to demonstrating the result for a single summand $\Bbbk_{C'}$. 
		Suppose $C=({c_1}^+ \cap {c^1}^-) \times ({c_2}^+ \cap {c^2}^-) \times ({c_3}^+ \cap {c^3}^-)$, $C'=({c_{1'}}^+ \cap {c^{1'}}^-) \times ({c_{2'}}^+ \cap {c^{2'}}^-) \times ({c_{3'}}^+ \cap {c^{3'}}^-)$, and $C\neq C'$.
		We can find out a cut is different between $C$ and $C'$. Without loss of generality, let $c_1 \neq c_{1'}$. 
		For any $\boldsymbol{t}\in C\cap C'$, $\text{Im}_{c_1,t}^{+}(\Bbbk_{C'})=\text{Im}_{c_1,t}^{-}(\Bbbk_{C'})$, then $\text{Im}_{C,\boldsymbol{t}}^{-}(\Bbbk_{C'})=\text{Im}_{C,\boldsymbol{t}}^{+}(\Bbbk_{C'})$. 
        Thus $V_{C,\boldsymbol{t}}^{-}(\Bbbk_{C'})=V_{C,\boldsymbol{t}}^{+}(\Bbbk_{C'})$, that is $V_{C,\boldsymbol{t}}^{+}(\Bbbk_{C'})/ V_{C,\boldsymbol{t}}^{-}(\Bbbk_{C'})=0$. 
		What's more, for any $\boldsymbol{t} \in C-C'$, we have $(\Bbbk_{C'})_{\boldsymbol{t}}=0$, then $V_{C,\boldsymbol{t}}^{+}(\Bbbk_{C'})/ V_{C,\boldsymbol{t}}^{-}(\Bbbk_{C'})=0$. 
		So $\mathcal{CF}_C(\Bbbk_{C'})=0$. 
		If we consider that $c^1 \neq c^{1'}$, then we may get the same result by computing $\text{Ker}_{C,\boldsymbol{t}}^{\pm}(\Bbbk_{C'})$. 
		
		Secondly, we suppose that $C=C'$. For any $\boldsymbol{t} \in C$, we easily get $V_{C,\boldsymbol{t}}^{+}=\Bbbk$ and $V_{C,\boldsymbol{t}}^{-}=0$, thus $\mathcal{CF}_C(\Bbbk_{C'})=\Bbbk$. 
    \end{proof}
    \vspace{0.3cm}
    In order to obtain the submodule $\mathbb{M}_B$, which is a submodule of $\mathbb{M}$ and exactly distribute in the block $B$, we need to define $V_{B}^{\pm}(\mathbb{M}):=\underset{\boldsymbol{t} \in B}{\underset{\longleftarrow}{\text{lim}}} V_{B,\boldsymbol{t}}^{\pm}$ that is independent of the selection of position $\boldsymbol{t}$. 
    \vspace{0.3cm}
    \begin{lemma}\label{16}
        If the 3-parameter persistence module $\mathbb{M}$ is \textbf{pfd} and satisfies 3-parameter strong exactness, then $\mathcal{CF}_B(\mathbb{M})\cong V_{B}^{+}/V_{B}^{-}$.
    \end{lemma}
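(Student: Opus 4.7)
The plan is to derive the isomorphism by applying the inverse limit functor to the natural short exact sequence
$$0 \to V_{B,\boldsymbol{t}}^- \to V_{B,\boldsymbol{t}}^+ \to V_{B,\boldsymbol{t}}^+/V_{B,\boldsymbol{t}}^- \to 0$$
indexed by $\boldsymbol{t} \in B$, and to combine left-exactness of $\varprojlim$ with the compatibility results established in the preceding corollary and lemma.

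First, the preceding corollary guarantees that the transition maps $\rho_{\boldsymbol{s}}^{\boldsymbol{t}}$ restrict to surjections $V_{B,\boldsymbol{s}}^{\pm} \twoheadrightarrow V_{B,\boldsymbol{t}}^{\pm}$ for $\boldsymbol{s} \leq \boldsymbol{t}$ in $B$, while the preceding lemma shows that the induced maps on the quotients are isomorphisms. Thus the three assignments $\boldsymbol{t} \mapsto V_{B,\boldsymbol{t}}^-$, $\boldsymbol{t} \mapsto V_{B,\boldsymbol{t}}^+$, $\boldsymbol{t} \mapsto V_{B,\boldsymbol{t}}^+/V_{B,\boldsymbol{t}}^-$ assemble into a short exact sequence of functors $B \to \textbf{Vec}_{\Bbbk}$. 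Since the quotient system consists entirely of isomorphisms and $B$ is a nonempty product of intervals, its limit is canonically identified with the value at any fixed point, giving $\mathcal{CF}_B(\mathbb{M}) \cong V_{B,\boldsymbol{t}_0}^+/V_{B,\boldsymbol{t}_0}^-$ for any $\boldsymbol{t}_0 \in B$.

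Next, applying $\varprojlim$ to the short exact sequence of diagrams yields, by left-exactness,
$$0 \to V_B^- \to V_B^+ \xrightarrow{q} \mathcal{CF}_B(\mathbb{M}),$$
where under the identification above the map $q$ factors as the projection $\pi_{\boldsymbol{t}_0}: V_B^+ \to V_{B,\boldsymbol{t}_0}^+$ composed with the quotient onto $V_{B,\boldsymbol{t}_0}^+/V_{B,\boldsymbol{t}_0}^-$. Thus surjectivity of $q$ reduces to surjectivity of $\pi_{\boldsymbol{t}_0}$, which in turn follows from the surjectivity of the $V^+$-transition maps together with the \textbf{pfd} hypothesis: given $v_{\boldsymbol{t}_0} \in V_{B,\boldsymbol{t}_0}^+$, the components $v_{\boldsymbol{t}}$ for $\boldsymbol{t} \geq \boldsymbol{t}_0$ are uniquely determined, and for the remaining $\boldsymbol{t}$ one uses that $B$ is closed under joins (so $\boldsymbol{t} \vee \boldsymbol{t}_0 \in B$) to lift preimages coherently via a Mittag-Leffler-type argument. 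The resulting short exact sequence $0 \to V_B^- \to V_B^+ \to \mathcal{CF}_B(\mathbb{M}) \to 0$ then yields the claimed isomorphism.

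The main obstacle is the careful construction of coherent liftings over the uncountable indexing poset $B$ to establish surjectivity of $\pi_{\boldsymbol{t}_0}$; the \textbf{pfd} hypothesis is essential here because it forces dimensions to stabilize along cofinal chains, allowing a Zorn or transfinite induction argument to assemble compatible choices at every incomparable point. In contrast, the identification $\ker q = V_B^-$ is automatic once surjectivity of $q$ is secured, since it follows from naturality together with the surjectivity of the transition maps of the $V^-$-system, which prevents elements of $V_B^-$ from being lost in passage to the limit.
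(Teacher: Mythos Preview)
Your overall framework matches the paper's: both pass the short exact sequence $0 \to V_{B,\boldsymbol{t}}^- \to V_{B,\boldsymbol{t}}^+ \to V_{B,\boldsymbol{t}}^+/V_{B,\boldsymbol{t}}^- \to 0$ through the inverse limit functor and argue that the result remains exact. The difference lies entirely in how right-exactness is secured.

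You correctly flag the surjectivity of $\pi_{\boldsymbol{t}_0}$ (equivalently, of $q$) as the crux, but then leave it as an acknowledged obstacle rather than a completed step. Saying that the \textbf{pfd} hypothesis ``allows a Zorn or transfinite induction argument to assemble compatible choices at every incomparable point'' is not a proof: over a poset of uncountable cofinality, stabilization of dimensions alone does not obviously let you glue local liftings into a single compatible family, and the details of such a transfinite construction would be nontrivial to write down correctly. As it stands, your last paragraph describes the difficulty rather than resolving it.

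The paper bypasses this entirely with two observations you do not make: first, the Mittag-Leffler condition holds for the system $\{V_{B,\boldsymbol{t}}^-\}$ automatically because every space is finite-dimensional; second, the block $B \subseteq \mathbb{R}^3$ contains a \emph{countable coinitial subset} for the product order. These two facts together permit a direct appeal to Proposition~13.2.2 of \cite{grothendieck1961elements}, which guarantees exactness of the limit sequence. This replaces your open-ended transfinite construction with a one-line citation. If you prefer to keep a hands-on argument, the minimal fix is to restrict to a countable coinitial sequence in $B$ first; the standard sequential Mittag-Leffler argument then gives surjectivity without any appeal to Zorn.
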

    \begin{proof}
        Obviously, $V_{B,\boldsymbol{s}}^{-} \subseteq V_{B,\boldsymbol{t}}^{-}$ for all $\boldsymbol{s} \leq \boldsymbol{t} \in B$.
		We easily know that $\{V_{B,\boldsymbol{s}}^{-},\rho_{\boldsymbol{s}}^{\boldsymbol{t}}\}_{\boldsymbol{s}\leq \boldsymbol{t} \in B}$ is an inverse system and the Mittag-Leffler condition holds for the inverse system since every space $V_{B,\boldsymbol{s}}^{-}$ is finite-dimensional. 
		Meanwhile, $B$ contains a countable subset that is coinitial for the product order $\leq$, and the collection of sequences is exact
		$$0 \to V_{B,\boldsymbol{t}}^{-} \to V_{B,\boldsymbol{t}}^{+} \to V_{B,\boldsymbol{t}}^{+}/V_{B,\boldsymbol{t}}^{-} \to 0.$$
		Thus, the limit sequence 
		$$0 \to V_{B}^{-}(\mathbb{M}) \to V_{B}^{+}(\mathbb{M}) \to \mathcal{CF}_B(\mathbb{M}) \to 0$$
		is exact by Proposition 13.2.2 of the reference\cite{grothendieck1961elements}. 
    \end{proof}
    \vspace{0.3cm}
    Let $\pi_{\boldsymbol{t}}:V_{B}^{+}(\mathbb{M}) \to V_{B,\boldsymbol{t}}^{+}$ denote the natural morphism induced by the universal property of $\underset{\boldsymbol{t} \in B}{\underset{\longleftarrow}{\text{lim}}} V_{B,\boldsymbol{t}}^{+}$. 
    Then we may get $V_{B}^{-}(\mathbb{M})=\underset{\boldsymbol{t}\in B}{\bigcap}\pi_{\boldsymbol{t}}^{-1}(V_{B,\boldsymbol{t}}^{-})$ and $V_{B}^{+}(\mathbb{M})=\underset{\boldsymbol{t}\in B}{\bigcap}\pi_{\boldsymbol{t}}^{-1}(V_{B,{\boldsymbol{t}}}^{+})$. Thus we have $V_{B}^{-}(\mathbb{M})\subset V_{B}^{+}(\mathbb{M})$. 
    \vspace{0.3cm}
    \begin{lemma}
        If a 3-parameter persistence module $\mathbb{M}$ is \textbf{pfd} and satisfies the 3-parameter strong exactness, then
		$$\bar{\pi}_{\boldsymbol{t}}:V_{B}^{+}(\mathbb{M})/V_{B}^{-}(\mathbb{M}) \to V_{B,\boldsymbol{t}}^{+}/V_{B,\boldsymbol{t}}^{-}$$
		is isomorphic. 
    \end{lemma}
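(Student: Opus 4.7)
The plan is to factor $\bar{\pi}_{\boldsymbol{t}}$ through $\mathcal{CF}_B(\mathbb{M})$ and exploit the already-established fact that the transition maps on the quotients are isomorphisms. The key observation is that the lemma establishing $\bar{\rho}_{\boldsymbol{s}}^{\boldsymbol{t}} : V_{B,\boldsymbol{s}}^{+}/V_{B,\boldsymbol{s}}^{-} \to V_{B,\boldsymbol{t}}^{+}/V_{B,\boldsymbol{t}}^{-}$ is an isomorphism tells us that $\mathcal{CF}_B(\mathbb{M})$ is the inverse limit of a system in which every transition morphism is an isomorphism. First I would record that the projection $\pi_{\boldsymbol{t}}$ carries $V_B^{-}(\mathbb{M})$ into $V_{B,\boldsymbol{t}}^{-}$ (by definition of $V_B^{-}(\mathbb{M})$ as the limit of the $V_{B,\boldsymbol{s}}^{-}$), so $\bar{\pi}_{\boldsymbol{t}}$ is well-defined; and that the canonical isomorphism $V_B^{+}(\mathbb{M})/V_B^{-}(\mathbb{M}) \cong \mathcal{CF}_B(\mathbb{M})$ from Lemma~\ref{16} is precisely the one induced by the family of compositions $V_B^{+}(\mathbb{M}) \xrightarrow{\pi_{\boldsymbol{s}}} V_{B,\boldsymbol{s}}^{+} \twoheadrightarrow V_{B,\boldsymbol{s}}^{+}/V_{B,\boldsymbol{s}}^{-}$. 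Hence composing this isomorphism with the canonical projection $\mathcal{CF}_B(\mathbb{M}) \to V_{B,\boldsymbol{t}}^{+}/V_{B,\boldsymbol{t}}^{-}$ recovers $\bar{\pi}_{\boldsymbol{t}}$, and it is enough to prove that this last projection is an isomorphism.

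Next I would note that any block $B \subseteq \mathbb{R}^3$ is the product of three intervals (possibly infinite), so it is a directed poset: the coordinatewise maximum of any two elements of $B$ lies in $B$ and bounds both from above. Combined with the invertibility of the transition maps in the quotient system, this reduces the lemma to the following categorical fact: for a functor $F : B \to \textbf{Vec}_{\Bbbk}$ over a directed poset $B$ whose transition morphisms are all isomorphisms, the canonical projection $\underset{\longleftarrow}{\text{lim}}\, F \to F(\boldsymbol{t})$ is an isomorphism for every $\boldsymbol{t} \in B$.

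The proof of this fact is the only piece of genuine work. Injectivity is almost immediate: if two compatible families agree at $\boldsymbol{t}$, then for any $\boldsymbol{s} \in B$ one chooses $\boldsymbol{u} \geq \boldsymbol{s}, \boldsymbol{t}$, observes that the two families agree at $\boldsymbol{u}$ (by compatibility with the $\boldsymbol{t}$-component), and then uses that $F(\boldsymbol{s} \leq \boldsymbol{u})$ is invertible to conclude agreement at $\boldsymbol{s}$. For surjectivity, given $v \in F(\boldsymbol{t})$ one defines
\[
v_{\boldsymbol{s}} \;=\; F(\boldsymbol{s} \leq \boldsymbol{u})^{-1}\, F(\boldsymbol{t} \leq \boldsymbol{u})(v)
\]
for an arbitrary $\boldsymbol{u} \geq \boldsymbol{s}, \boldsymbol{t}$, and checks independence of the choice of $\boldsymbol{u}$ by passing to a further upper bound of two candidates. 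Compatibility of the resulting family under $F(\boldsymbol{s}_1 \leq \boldsymbol{s}_2)$ falls out of a similar argument.

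The main obstacle, such as it is, is simply being careful with the well-definedness verification in the surjectivity step, since $B$ is only directed rather than totally ordered; but this is a standard cofinality argument and introduces no new ideas beyond what has already been developed. Once this categorical fact is in hand, chaining it with Lemma~\ref{16} yields that $\bar{\pi}_{\boldsymbol{t}}$ is a composite of two isomorphisms, hence itself an isomorphism.
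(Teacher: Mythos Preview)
Your proposal is correct and matches the paper's approach: the paper simply defers to Lemma~5.2 of Cochoy--Oudot, and what you have written is exactly the argument behind that lemma---factor through $\mathcal{CF}_B(\mathbb{M})$ via Lemma~\ref{16}, then use that an inverse limit of a system with invertible transition maps over a (co)directed index projects isomorphically onto each component. No gaps; the directedness of $B$ as a product of intervals is the only structural input needed, and you have identified it.
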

    \begin{proof}
		Referring to the proof of Lemma 5.2 of \cite{cochoy2020decomposition}. 
    \end{proof}
    \vspace{0.3cm}
    After obtaining the previous results, we can select the appropriate subspace $M_B^0$ from $V_{B}^+$, so that the submodules $\mathbb{M}_B$ are generated through $\pi_{\boldsymbol{t}}(M_B^0)$. 
    \vspace{0.3cm}
    \begin{proposition}
		If the 3-parameter persistence module $\mathbb{M}$ is \textbf{pfd} and satisfies 3-parameter strong exactness, then the subspace $V_{B}^{-}$ has a complementary space $M_{B}^{0}$ in $V_{B}^{+}$ such that the following persistence module 
        \begin{equation}
            (\mathbb{M}_B)_{\boldsymbol{t}}=
            \begin{cases}
                \pi_{\boldsymbol{t}}(M_{B}^{0}),&\boldsymbol{t}\in B\\
                0,&\boldsymbol{t}\notin B\\
            \end{cases}
        \end{equation}
		is a submodule $\mathbb{M}_B$ of $\mathbb{M}$. 
    \end{proposition}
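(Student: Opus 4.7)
The plan is to build $M_B^0$ carefully so that every projection $\pi_{\boldsymbol{t}}(M_B^0)$ lands in a subspace from which the maps going outside $B$ automatically vanish. By the preceding lemma, $\bar\pi_{\boldsymbol{t}}\colon V_B^+/V_B^-\to V_{B,\boldsymbol{t}}^+/V_{B,\boldsymbol{t}}^-$ is an isomorphism with finite-dimensional target, so $V_B^-$ admits a finite-dimensional linear complement in $V_B^+$ and any such complement $M_B^0$ has $\pi_{\boldsymbol{t}}|_{M_B^0}$ injective (if $\pi_{\boldsymbol{t}}(m)\in V_{B,\boldsymbol{t}}^-$ then $[m]=\bar\pi_{\boldsymbol{t}}^{-1}(0)=0$, and $m\in M_B^0\cap V_B^-=0$). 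To verify that $\mathbb{M}_B$ is a submodule I would split $\boldsymbol{s}\leq\boldsymbol{t}$ into three cases: both in $B$, where $\rho_{\boldsymbol{s}}^{\boldsymbol{t}}\circ\pi_{\boldsymbol{s}}=\pi_{\boldsymbol{t}}$ (universal property of the inverse limit) gives $\rho_{\boldsymbol{s}}^{\boldsymbol{t}}(\pi_{\boldsymbol{s}}(M_B^0))=\pi_{\boldsymbol{t}}(M_B^0)$ for free; $\boldsymbol{s}\notin B$, where the source is zero; and the substantive case $\boldsymbol{s}\in B$, $\boldsymbol{t}\notin B$, where I must show $\rho_{\boldsymbol{s}}^{\boldsymbol{t}}(\pi_{\boldsymbol{s}}(M_B^0))=0$.

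That case amounts to arranging $\pi_{\boldsymbol{s}}(M_B^0)\subseteq K_{B,\boldsymbol{s}}^+:=\bigcap_{\boldsymbol{u}\geq\boldsymbol{s},\,\boldsymbol{u}\notin B}\text{Ker }\rho_{\boldsymbol{s}}^{\boldsymbol{u}}$ for every $\boldsymbol{s}\in B$. A generic complement only guarantees $\pi_{\boldsymbol{s}}(M_B^0)\subseteq\text{Ker}_{B,\boldsymbol{s}}^+=K_{B,\boldsymbol{s}}^++K_{B,\boldsymbol{s}}^-$, and the $K_{B,\boldsymbol{s}}^-$ part is genuinely not killed by the maps leaving $B$, so some choice is needed. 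The central technical step is the modular identity
\[
V_{B,\boldsymbol{s}}^+=\bigl(\text{Im}_{B,\boldsymbol{s}}^+\cap K_{B,\boldsymbol{s}}^+\bigr)+\bigl(\text{Im}_{B,\boldsymbol{s}}^+\cap K_{B,\boldsymbol{s}}^-\bigr),
\]
which, combined with the obvious inclusion $\text{Im}_{B,\boldsymbol{s}}^+\cap K_{B,\boldsymbol{s}}^-\subseteq V_{B,\boldsymbol{s}}^-$, forces the natural map $\text{Im}_{B,\boldsymbol{s}}^+\cap K_{B,\boldsymbol{s}}^+\to V_{B,\boldsymbol{s}}^+/V_{B,\boldsymbol{s}}^-$ to be surjective.

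Granting the identity, I would form the inverse limit $\widetilde V_B:=\underset{\boldsymbol{t}\in B}{\underset{\longleftarrow}{\text{lim}}}\bigl(\text{Im}_{B,\boldsymbol{t}}^+\cap K_{B,\boldsymbol{t}}^+\bigr)$, run the Mittag--Leffler argument of Lemma~\ref{16} (its hypotheses survive intact thanks to Lemma~\ref{2}) to see that $\widetilde V_B\to V_B^+/V_B^-$ is still surjective, lift a basis of $V_B^+/V_B^-$ into $\widetilde V_B$, and define $M_B^0$ as the span of these lifts. Then $M_B^0\oplus V_B^-=V_B^+$ and by construction $\pi_{\boldsymbol{s}}(M_B^0)\subseteq\text{Im}_{B,\boldsymbol{s}}^+\cap K_{B,\boldsymbol{s}}^+\subseteq K_{B,\boldsymbol{s}}^+$ for every $\boldsymbol{s}\in B$, which closes the hard case. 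The main obstacle I anticipate is the modular identity: it is not a general lattice statement, and it must be extracted from the axis-wise formulas for $\text{Im}_{B,\boldsymbol{s}}^{\pm}$ and $\text{Ker}_{B,\boldsymbol{s}}^{\pm}$, Lemma~\ref{7}, and the 3-parameter strong exactness, with a case analysis on which of the six cuts $c_i,c^i$ defining $B$ are trivial. The key leverage is strong exactness itself, which supplies common antecedents from compatible face data exactly as in Lemma~\ref{3}, and that is precisely the mechanism that should let an arbitrary element of $\text{Im}_{B,\boldsymbol{s}}^+\cap(K_{B,\boldsymbol{s}}^++K_{B,\boldsymbol{s}}^-)$ be split into $K^+$ and $K^-$ pieces that each still lie in $\text{Im}_{B,\boldsymbol{s}}^+$.
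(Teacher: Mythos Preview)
Your proposal is correct and follows essentially the same route as the paper: the two easy cases are handled identically, and for the substantive case you aim to choose $M_B^0$ inside the inverse limit of $\text{Im}_{B,\boldsymbol{s}}^+\cap K_{B,\boldsymbol{s}}^+$, reducing everything to the pointwise splitting $V_{B,\boldsymbol{s}}^+=(\text{Im}_{B,\boldsymbol{s}}^+\cap K_{B,\boldsymbol{s}}^+)+V_{B,\boldsymbol{s}}^-$ followed by a Mittag--Leffler passage to limits. The paper carries out exactly this program, phrasing it as $V_{B,\boldsymbol{s}}^+=K_{B,\boldsymbol{s}}^++V_{B,\boldsymbol{s}}^-$ with a block-type-specific definition of $K_{B,\boldsymbol{s}}^+$ (which coincides with your $\text{Im}_{B,\boldsymbol{s}}^+\cap K_{B,\boldsymbol{s}}^+$ in each case), and proving the splitting via the same case analysis on birth/death/layer blocks using Lemma~\ref{7} that you anticipate.
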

    \begin{proof}
		We will discuss the proof of the results in three cases: birth blocks, death blocks, and layer blocks. 
		For a fixed block $B$, regardless of the choice of subspace $M_{B}^{0}$ satisfying the decomposition $V_{B}^{+}(\mathbb{M})=M_{B}^{0} \oplus V_{B}^{-}(\mathbb{M})$, the following statements will hold: 
		\begin{itemize}
		  \item for any $\boldsymbol{s}, \boldsymbol{t} \in B$ satisfying $\boldsymbol{s} \leq \boldsymbol{t}$, $\rho_{\boldsymbol{s}}^{\boldsymbol{t}}((\mathbb{M}_B)_{\boldsymbol{t}}) \subseteq (\mathbb{M}_B)_{\boldsymbol{t}}$, since $\rho_{\boldsymbol{s}}^{\boldsymbol{t}} \circ \pi_{\boldsymbol{s}}=\pi_{\boldsymbol{t}}$ by the definition of $\pi$. 
		  \item for any $\boldsymbol{s} \notin B, \boldsymbol{t} \in B$ satisfying $\boldsymbol{s} \leq \boldsymbol{t}$, $\rho_{\boldsymbol{s}}^{\boldsymbol{t}}((\mathbb{M}_B)_{\boldsymbol{s}})=\rho_{\boldsymbol{s}}^{\boldsymbol{t}}(0)=0 \subseteq (\mathbb{M}_B)_{\boldsymbol{t}}$. 
		\end{itemize}
		There only remains to show that, for any $\boldsymbol{s} \leq \boldsymbol{t}$, $\boldsymbol{s}\in B$ and $\boldsymbol{t} \notin B$, $\rho_{\boldsymbol{s}}^{\boldsymbol{t}}((\mathbb{M}_B)_{\boldsymbol{s}})=0$. Therefore, we need to choose a suitable subspace $M_B^0$ that satisfies the condition.
		
		\textbf{Case $B$ is birth block}: $C=({c_1}^+ \cap {c^1}^-) \times ({c_2}^+ \cap {c^2}^-) \times ({c_3}^+ \cap {c^3}^-)$ in which ${c^1}^+ = {c^2}^+ = {c^3}^+ = \emptyset$. 
		
		For any choice of subspace $M_B^0$, the condition can be satisfied.
		
		\textbf{Case $B$ is death block}: $C=({c_1}^+ \cap {c^1}^-) \times ({c_2}^+ \cap {c^2}^-) \times ({c_3}^+ \cap {c^3}^-)$ in which ${c_1}^- = {c_2}^- = {c_3}^- = \emptyset$. 
		
		Let $K_{B,\boldsymbol{s}}^{+}=\text{Ker}_{c^1,\boldsymbol{s}}^{+}\cap \text{Ker}_{c^2,\boldsymbol{s}}^{+} \cap \text{Ker}_{c^3,\boldsymbol{s}}^{+}$ for all $\boldsymbol{s} \in B$. 
        The collection of these vector spaces, combined with the transition maps $\rho_{\boldsymbol{s}}^{\boldsymbol{t}}$ for $\boldsymbol{s}\leq \boldsymbol{t} \in B$ forms an inverse system. 
		Because $K_{B,\boldsymbol{s}}^{+} \subseteq \text{Im}_{B,\boldsymbol{s}}^{+}$ by Lemma\ref{2} and $K_{B,\boldsymbol{s}}^{+} \subseteq \text{Ker}_{B,\boldsymbol{s}}^{+}$, then $K_{B,\boldsymbol{s}}^{+}\subseteq V_{B,\boldsymbol{s}}^{+}$. Thus
		$$K_{B}^{+}(\mathbb{M})=\underset{\boldsymbol{s}\in B}{\underset{\longleftarrow}{\text{lim}}} K_{B,\boldsymbol{t}}^{+} = \underset{\boldsymbol{s}\in B}{\bigcap} \pi_{\boldsymbol{s}}^{-1}(K_{B,\boldsymbol{s}}^{+}) \subseteq V_{B}^{+}(\mathbb{M}).$$
		And for any $\boldsymbol{s}\in B$, following equation holds:
		\begin{equation*}
		  \begin{aligned}
		      V_{B,\boldsymbol{s}}^{+}&=\text{Im}_{B,\boldsymbol{s}}^{+}\cap \text{Ker}_{B,\boldsymbol{s}}^{+}=\text{Im}_{B,\boldsymbol{s}}^{+} \cap (\text{Ker}_{B,\boldsymbol{s}}^{-}+K_{B,\boldsymbol{s}}^{+})\\
				&=\text{Im}_{B,\boldsymbol{s}}^{+}\cap \text{Ker}_{B,\boldsymbol{s}}^{-} + \text{Im}_{B,\boldsymbol{s}}^{+} \cap K_{B,\boldsymbol{s}}^{+}=V_{B,\boldsymbol{s}}^{-}+K_{B,\boldsymbol{s}}^{+}.
		  \end{aligned}
		\end{equation*}
		In other words, for any $\boldsymbol{s}\in B$, the following sequence is exact: 
        $$0 \to V_{B,\boldsymbol{s}}^{-}\cap K_{B,\boldsymbol{s}}^{+} \xrightarrow[]{\alpha \mapsto (\alpha,-\alpha)} V_{B,\boldsymbol{s}}^{-}\oplus K_{B,\boldsymbol{s}}^{+} \xrightarrow[]{(\alpha,\beta) \mapsto \alpha+\beta} 
        V_{B,\boldsymbol{s}}^{+} \to 0 . $$
        This system of exact sequences satisfies the Mittag-Leffler condition since every space $V_{B,\boldsymbol{s}}^{-}\cap K_{B,\boldsymbol{s}}^{+}$ is finite-dimensional, and so, by Proposition13.2.2 of \cite{grothendieck1961elements}, the limit sequence is exact. 
		Note that $\underset{\boldsymbol{s}\in B}{\underset{\longleftarrow}{\text{lim}}} V_{B,\boldsymbol{s}}^{-}\cap K_{B,\boldsymbol{s}}^{+}=V_{B}^{-}(\mathbb{M}) \cap K_{B}^{+}(\mathbb{M})$ in $V_{B}^{+}(\mathbb{M})$, and the canonical morphism $V_{B}^{-}(\mathbb{M}) \oplus K_{B}^{+}(\mathbb{M}) \to \underset{\boldsymbol{s}\in B}{\underset{\longleftarrow}{\text{lim}}} V_{B,\boldsymbol{s}}^{-}\oplus K_{B,\boldsymbol{s}}^{+}$ is an isomorphism, then the following sequence is exact:
        $$ 0 \to V_{B}^{-}(\mathbb{M}) \cap K_{B}^{+}(\mathbb{M}) \xrightarrow[]{\alpha \mapsto (\alpha,-\alpha)} V_{B}^{-}(\mathbb{M})\oplus K_{B}^{+}(\mathbb{M}) \xrightarrow[]{(\alpha,\beta) \mapsto \alpha+\beta} V_{B}^{+}(\mathbb{M}) \to 0 $$
		which implies that $V_{B}^{-}(\mathbb{M})+K_{B}^{+}(\mathbb{M})=V_{B}^{+}(\mathbb{M})$. 
		Thus we only need to choose a complement subspace $M_B^0$ of $V_{B}^{-}(\mathbb{M})$ inside $K_{B}^{+}(\mathbb{M})$.
		
		\textbf{Case $B$ is strict layer block}:
		We only need to consider one case that is ${c_2}^-={c^2}^+={c_3}^-={c^3}^+=\emptyset$ and ${c_1}^-\neq \emptyset \neq {c^1}^+$, the rest are similar. 
		Let $K_{B,\boldsymbol{s}}^{+}=\text{Im}_{c_1,\boldsymbol{s}}^{+} \cap Ker_{c^1,\boldsymbol{s}}^{+}$ for any $\boldsymbol{s} \in B$.
		
		We have $V_{B,\boldsymbol{s}}^{+} = \text{Im}_{B,\boldsymbol{s}}^{+} \cap \text{Ker}_{B,\boldsymbol{s}}^{+} = \text{Im}_{c_1,\boldsymbol{s}}^{+} \cap \text{Im}_{c_2,\boldsymbol{s}}^{+} \cap \text{Im}_{c_3,\boldsymbol{s}}^{+} \cap (\text{Ker}_{c^1,\boldsymbol{s}}^{+}+\text{Ker}_{c^2,\boldsymbol{s}}^{-}+\text{Ker}_{c^3,\boldsymbol{s}}^{-})$. 
		Because $\text{Ker}_{c^2,\boldsymbol{s}}^{-} \subseteq \text{Im}_{c_1,\boldsymbol{s}}^{+} \cap \text{Im}_{c_3,\boldsymbol{s}}^{+}$, $\text{Ker}_{c^3,\boldsymbol{s}}^{-} \subseteq \text{Im}_{c_1,\boldsymbol{s}}^{+} \cap \text{Im}_{c_2,\boldsymbol{s}}^{+}$ and $\text{Ker}_{c^1,\boldsymbol{s}}^{-} \subseteq \text{Ker}_{c^1,\boldsymbol{s}}^{+}\subseteq \text{Im}_{c_2,\boldsymbol{s}}^{+} \cap \text{Im}_{c_3,\boldsymbol{s}}^{+}$ by Lemma\ref{2}, then we get 
		\begin{equation*}
		  \begin{aligned}
				V_{B,\boldsymbol{s}}^{+}&=\text{Im}_{c_1,\boldsymbol{s}}^{+} \cap \text{Im}_{c_2,\boldsymbol{s}}^{+} \cap \text{Im}_{c_3,\boldsymbol{s}}^{+} \cap (\text{Ker}_{c^1,\boldsymbol{s}}^{+}+\text{Ker}_{c^2,\boldsymbol{s}}^{-}+\text{Ker}_{c^3,\boldsymbol{s}}^{-})\\
				&= \text{Im}_{c_1,\boldsymbol{s}}^{+} \cap \text{Ker}_{c^1,\boldsymbol{s}}^{+} + \text{Im}_{c_2,\boldsymbol{s}}^{+} \cap \text{Ker}_{c^2,\boldsymbol{s}}^{-} + \text{Im}_{c_3,\boldsymbol{s}}^{+} \cap \text{Ker}_{c^3,\boldsymbol{s}}^{-}\\
				&= K_{B,\boldsymbol{s}}^{+} + \text{Im}_{c_2,\boldsymbol{s}}^{+} \cap \text{Ker}_{c^2,\boldsymbol{s}}^{-} + \text{Im}_{c_3,\boldsymbol{s}}^{+} \cap \text{Ker}_{c^3,\boldsymbol{s}}^{-}
		  \end{aligned}
		\end{equation*}
		And we have 
		\begin{equation*}
		  \begin{aligned}
				V_{B,\boldsymbol{s}}^{-}&=\text{Im}_{B,\boldsymbol{s}}^{+} \cap \text{Ker}_{B,\boldsymbol{s}}^{-} + \text{Im}_{B,\boldsymbol{s}}^{-} \cap \text{Ker}_{B,\boldsymbol{s}}^{+}\\
				&=\text{Im}_{c_1,\boldsymbol{s}}^{+} \cap \text{Ker}_{c^1,\boldsymbol{s}}^{-} + \text{Im}_{c_2,\boldsymbol{s}}^{+} \cap \text{Ker}_{c^2,\boldsymbol{s}}^{-} + \text{Im}_{c_3,\boldsymbol{s}}^{+} \cap \text{Ker}_{c^3,\boldsymbol{s}}^{-} + \text{Im}_{B,\boldsymbol{s}}^{-} \cap \text{Ker}_{B,\boldsymbol{s}}^{+}\\
		  \end{aligned}
		\end{equation*}
		Thus $V_{B,\boldsymbol{s}}^{+}=V_{B,\boldsymbol{s}}^{-}+K_{B,\boldsymbol{s}}^{+}$. 
        Following a similar argument to the preceding case, we conclude that the limits satisfy $V_{B}^{+}(\mathbb{M})=V_{B}^{-}(\mathbb{M})+K_{B}^{+}(\mathbb{M})$. 
        Therefore, we may select the vector space complement $M_{B}^{0}$ inside $K_{B}^{+}(\mathbb{M})$, guaranteeing that $\pi_{\boldsymbol{s}}(M_{B}^{0})\subseteq K_{B,\boldsymbol{s}}^{+}$ for any $\boldsymbol{s} \in B$. 
    \end{proof}
    \vspace{0.3cm}
	
    Because $V_{B}^{+}(\mathbb{M})=V_{B}^{-}\oplus M_{B}^{0}$, and $\pi_{\boldsymbol{t}}$ and $\pi_{\boldsymbol{t}}|_{V_{B}^{-}}$ are isomophisms, $M_{B}^{0}\cong (\mathbb{M}_B)_{\boldsymbol{t}}$ and $V_{B,\boldsymbol{t}}^{+}\cong V_{B,\boldsymbol{t}}^{-}\oplus (\mathbb{M}_B)_{\boldsymbol{t}}$. 
    \vspace{0.3cm}
    \begin{corollary}\label{8}
		For every block $B$, $\mathbb{M}_B \cong \underset{\text{dim }\mathcal{CF}_B(\mathbb{M})}{\bigoplus}\Bbbk_B$.
    \end{corollary}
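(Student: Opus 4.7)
The plan is to construct an explicit direct-sum decomposition of $\mathbb{M}_B$ by fixing a basis of $M_B^0$ and lifting each basis vector to a copy of $\Bbbk_B$ sitting inside $\mathbb{M}_B$. The count will match because $M_B^0$ is a complement of $V_B^-(\mathbb{M})$ in $V_B^+(\mathbb{M})$, so by Lemma \ref{16} we have $\dim M_B^0 = \dim V_B^+(\mathbb{M})/V_B^-(\mathbb{M}) = \dim \mathcal{CF}_B(\mathbb{M})$; this ensures the number of summands produced is precisely $\dim \mathcal{CF}_B(\mathbb{M})$.

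First I would fix a basis $e_1,\dots,e_n$ of $M_B^0$, where $n = \dim \mathcal{CF}_B(\mathbb{M})$. Since $\pi_{\boldsymbol{t}}$ restricts to an isomorphism $M_B^0 \xrightarrow{\sim} (\mathbb{M}_B)_{\boldsymbol{t}}$ for every $\boldsymbol{t} \in B$, the vectors $\pi_{\boldsymbol{t}}(e_1),\dots,\pi_{\boldsymbol{t}}(e_n)$ form a basis of $(\mathbb{M}_B)_{\boldsymbol{t}}$. For each $i$ I would define a candidate submodule $N_i \subseteq \mathbb{M}_B$ by
\begin{equation*}
(N_i)_{\boldsymbol{t}} = \begin{cases} \Bbbk \cdot \pi_{\boldsymbol{t}}(e_i), & \boldsymbol{t} \in B, \\ 0, & \boldsymbol{t} \notin B, \end{cases}
\end{equation*}
with transition maps inherited from $\mathbb{M}_B$. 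To verify that $N_i$ really is a submodule, I would use the defining relation $\rho_{\boldsymbol{s}}^{\boldsymbol{t}} \circ \pi_{\boldsymbol{s}} = \pi_{\boldsymbol{t}}$ of the limit morphism: for $\boldsymbol{s} \leq \boldsymbol{t}$ both in $B$ it yields $\rho_{\boldsymbol{s}}^{\boldsymbol{t}}(\pi_{\boldsymbol{s}}(e_i)) = \pi_{\boldsymbol{t}}(e_i)$, so $N_i$ is preserved by the internal transitions; when either endpoint lies outside $B$ the preservation is automatic because the corresponding fiber of $\mathbb{M}_B$ already vanishes.

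Next I would show $N_i \cong \Bbbk_B$. Each fiber $(N_i)_{\boldsymbol{t}}$ with $\boldsymbol{t}\in B$ is one-dimensional with distinguished generator $\pi_{\boldsymbol{t}}(e_i)$, and the previous computation says $\rho_{\boldsymbol{s}}^{\boldsymbol{t}}$ sends the generator at $\boldsymbol{s}$ to the generator at $\boldsymbol{t}$, so in these bases the transition maps are $\mathrm{id}_{\Bbbk}$; outside $B$ all fibers vanish, and all transitions touching the outside are zero. That is precisely the structure of $\Bbbk_B$. Finally, the direct-sum identity follows pointwise: for $\boldsymbol{t}\in B$ the injectivity of $\pi_{\boldsymbol{t}}|_{M_B^0}$ makes $\{\pi_{\boldsymbol{t}}(e_i)\}_i$ linearly independent, whence
\begin{equation*}
(\mathbb{M}_B)_{\boldsymbol{t}} = \pi_{\boldsymbol{t}}(M_B^0) = \bigoplus_{i=1}^n \Bbbk\cdot \pi_{\boldsymbol{t}}(e_i) = \bigoplus_{i=1}^n (N_i)_{\boldsymbol{t}},
\end{equation*}
and for $\boldsymbol{t}\notin B$ both sides are $0$. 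Because each $N_i$ is a submodule, this fiberwise decomposition is compatible with the transition maps, giving $\mathbb{M}_B \cong \bigoplus_{i=1}^n \Bbbk_B$.

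I do not expect a genuine obstacle here: once the preceding proposition has produced $M_B^0$ together with the fact that $\pi_{\boldsymbol{t}}$ restricts to an isomorphism onto $(\mathbb{M}_B)_{\boldsymbol{t}}$, the corollary reduces to the bookkeeping of choosing a basis and tracking it through $\pi_{\boldsymbol{t}}$. The only subtlety is confirming that the chosen $M_B^0$ kills the transitions $\rho_{\boldsymbol{s}}^{\boldsymbol{t}}(\pi_{\boldsymbol{s}}(M_B^0))$ when $\boldsymbol{s}\in B$ and $\boldsymbol{t}\notin B$, but that requirement was already enforced in the layer-block and death-block cases of the preceding proposition by selecting $M_B^0$ inside $K_B^+(\mathbb{M})$, so nothing further is needed.
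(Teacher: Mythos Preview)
Your proposal is correct and is precisely the natural elaboration of what the paper does: the paper states this corollary without a separate proof, relying only on the sentence immediately preceding it (that $V_B^+(\mathbb{M}) = V_B^- \oplus M_B^0$ with $\pi_{\boldsymbol{t}}|_{M_B^0}$ an isomorphism onto $(\mathbb{M}_B)_{\boldsymbol{t}}$) together with Lemma~\ref{16}. Your argument simply makes explicit the bookkeeping the paper leaves to the reader, and there is no substantive difference in approach.
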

    \vspace{0.3cm}

    \subsection{The Direct Sum Decomposition}
    Before we prove the direct sum decomposition of $\mathbb{M}:\mathbb{R}^3 \to \textbf{Vec}_{\Bbbk}$, which satisfies 3-parameter strong exactness, we need to introduce some definitions and results of disjointness and covering of sections\cite{crawley2015decomposition}. 

    In a vector space $U$, a section consists of two subspaces $(F^-,F^+)$ such that $F^- \subset F^+ \subset U$. 
    First, we introduce the notations of disjointness of sections. 
    We call that a collection of sections $\{(F_{\lambda}^{-},F_{\lambda}^{+})\}_{\lambda \in \Lambda}$ in $U$ is said to be disjoint, if whenever $\lambda \neq \mu$, one of the inclusions $F_{\lambda}^{+}\subseteq F_{\mu}^{-}$ or $F_{\mu}^{+}\subseteq F_{\lambda}^{-}$ is satisfied. 
    \vspace{0.3cm}
    \begin{lemma}{\cite{crawley2015decomposition}}\label{10}
		Let $\{(F_{\lambda}^{-},F_{\lambda}^{+})\}_{\lambda \in \Lambda}$ be a collection of sections in $U$, that is disjoint. 
        For any $\lambda\in \Lambda$, suppose that $V_{\lambda}$ is a subspace satisfying $F_{\lambda}^{+}=M_{\lambda}\oplus F_{\lambda}^{-}$, then $\sum V_{\lambda} \cong \bigoplus V_{\lambda}$. 
    \end{lemma}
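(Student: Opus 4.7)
The plan is to establish directness of the sum by the standard linear-independence criterion: any finite relation
$$v_{\lambda_1}+\cdots+v_{\lambda_n}=0,\qquad v_{\lambda_i}\in V_{\lambda_i},$$
with distinct indices $\lambda_1,\ldots,\lambda_n$ must force every $v_{\lambda_i}=0$. I would proceed by induction on $n$. The base case $n=1$ is immediate: the decomposition $F_{\lambda}^{+}=V_{\lambda}\oplus F_{\lambda}^{-}$ gives $V_{\lambda}\cap F_{\lambda}^{-}=0$, and in particular $V_{\lambda}\neq 0$ cases are not annihilated trivially.

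\textbf{Key step.} For the inductive step I would exploit the disjointness hypothesis to impose a total-order-like structure on the indices. Define $\lambda\preceq\mu$ to mean $F_{\lambda}^{+}\subseteq F_{\mu}^{-}$. Transitivity follows from the chain $F_{\lambda}^{+}\subseteq F_{\mu}^{-}\subseteq F_{\mu}^{+}\subseteq F_{\nu}^{-}$, and disjointness guarantees comparability of any two distinct indices. Hence among the finite collection $\{\lambda_1,\ldots,\lambda_n\}$ appearing in the relation, one index, say $\lambda_n$ after relabeling, is maximal, satisfying $F_{\lambda_i}^{+}\subseteq F_{\lambda_n}^{-}$ for every $i<n$. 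This places $v_{\lambda_i}\in V_{\lambda_i}\subseteq F_{\lambda_i}^{+}\subseteq F_{\lambda_n}^{-}$ for $i<n$, and therefore
$$v_{\lambda_n}=-\sum_{i<n}v_{\lambda_i}\in F_{\lambda_n}^{-}.$$
Combined with $v_{\lambda_n}\in V_{\lambda_n}$ and $V_{\lambda_n}\cap F_{\lambda_n}^{-}=0$, this yields $v_{\lambda_n}=0$. The remaining relation $\sum_{i<n}v_{\lambda_i}=0$ is then handled by the induction hypothesis, completing the argument.

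\textbf{Main obstacle.} The only delicate point is that the relation $\preceq$ may fail to be antisymmetric: it could happen that $F_{\lambda}^{+}\subseteq F_{\mu}^{-}$ and $F_{\mu}^{+}\subseteq F_{\lambda}^{-}$ simultaneously for some pair $\lambda\neq\mu$. However, chaining these inclusions forces $F_{\lambda}^{+}=F_{\lambda}^{-}$, so that $V_{\lambda}=0$ and the corresponding summand $v_{\lambda}$ vanishes automatically; such indices can be pruned from the relation without loss. After this pruning, $\preceq$ restricts to a genuine total order on the surviving indices, and the maximal element required in the key step exists. Handling this pruning cleanly, together with verifying that the transitive chains of inclusions produce the maximum element, is the principal technical content of the proof.
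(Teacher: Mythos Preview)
Your argument is correct. The paper itself does not supply a proof of this lemma; it is quoted from the cited reference \cite{crawley2015decomposition} and invoked as a black box. Your induction on the length of a finite relation, together with the observation that disjointness induces a total preorder on the indices and that the pathological case $F_\lambda^{+}=F_\lambda^{-}$ forces $V_\lambda=0$, is exactly the standard proof one finds in Crawley--Boevey's work, so there is nothing to contrast.
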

    \vspace{0.3cm}
    \begin{lemma}{\cite{crawley2015decomposition}}\label{4}
		Given a fixed $\boldsymbol{t} \in \mathbb{R}^3$, each of the collections $\{(\text{Im}_{c_1,\boldsymbol{t}}^{-},\text{Im}_{c_1,\boldsymbol{t}}^{+})\}_{c_1:t_1 \in {c_1}^+}$, $\{(\text{Ker}_{c^1,\boldsymbol{t}}^{-},\text{Ker}_{c^1,\boldsymbol{t}}^{+})\}_{c^1:t_1 \in {c^1}^-}$, $\{(\text{Im}_{c_2,\boldsymbol{t}}^{-},\text{Im}_{c_2,\boldsymbol{t}}^{+})\}_{c_2:t_2 \in {c_2}^+}$, $\{(\text{Ker}_{c^2,\boldsymbol{t}}^{-},\text{Ker}_{c^2,\boldsymbol{t}}^{+})\}_{c^2:t_2 \in {c^2}^-}$, $\{(\text{Im}_{c_3,\boldsymbol{t}}^{-},\text{Im}_{c_3,\boldsymbol{t}}^{+})\}_{c_3:t_3 \in {c_3}^+}$ and $\{(\text{Ker}_{c^3,\boldsymbol{t}}^{-},\text{Ker}_{c^3,\boldsymbol{t}}^{+})\}_{c^3:t_3 \in {c^3}^-}$ is disjoint in $M_{\boldsymbol{t}}$.
    \end{lemma}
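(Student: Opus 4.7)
The plan is to verify disjointness by a direct computation for one of the six collections; the remaining five follow by permuting the three axes (symmetry) and by dualizing between images and kernels. I will treat $\{(\text{Im}_{c_1,\boldsymbol{t}}^{-},\text{Im}_{c_1,\boldsymbol{t}}^{+})\}_{c_1:t_1 \in {c_1}^+}$ in detail and then sketch the kernel case.

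First I would record a preliminary observation: the family of cuts of $\mathbb{R}$ is totally ordered by inclusion of the lower part $c^{-}$. Indeed, $c^{-}$ is downward closed (if $y\in c^{-}$ and $x<y$ with $x\notin c^{-}$, then $x\in c^{+}$ forces $y<x$ by the cut axiom, a contradiction), so any two lower parts that were incomparable would yield witnesses contradicting the cut inequality $c^{-}<c^{+}$. Hence for distinct $c_1\neq c_1'$, exactly one of ${c_1}^{-}\subsetneq {c_1'}^{-}$ or ${c_1'}^{-}\subsetneq {c_1}^{-}$ holds.

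Now fix $\boldsymbol{t}$ and two distinct cuts $c_1,c_1'$ with $t_1\in {c_1}^{+}\cap {c_1'}^{+}$, and without loss of generality assume ${c_1}^{-}\subsetneq {c_1'}^{-}$. Choose any witness $x\in {c_1'}^{-}\setminus {c_1}^{-}={c_1'}^{-}\cap {c_1}^{+}$. Applying the cut axiom to $x\in {c_1'}^{-}$ and $t_1\in {c_1'}^{+}$ gives $x<t_1$. Therefore $x$ is a legitimate index for the intersection defining $\text{Im}_{c_1,\boldsymbol{t}}^{+}$ and simultaneously for the sum defining $\text{Im}_{c_1',\boldsymbol{t}}^{-}$, yielding
\[
\text{Im}_{c_1,\boldsymbol{t}}^{+}\ \subseteq\ \text{Im}\,\rho_{(x,t_2,t_3)}^{\boldsymbol{t}}\ \subseteq\ \text{Im}_{c_1',\boldsymbol{t}}^{-},
\]
which is one of the two inclusions required for disjointness.

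For the kernel collection $\{(\text{Ker}_{c^{1},\boldsymbol{t}}^{-},\text{Ker}_{c^{1},\boldsymbol{t}}^{+})\}_{c^{1}:t_1\in {c^{1}}^{-}}$ the argument is dual: given distinct $c^{1},c^{1'}$ with $t_1\in {c^{1}}^{-}\cap {c^{1'}}^{-}$, WLOG ${c^{1}}^{-}\subsetneq {c^{1'}}^{-}$; choose $x\in {c^{1'}}^{-}\cap {c^{1}}^{+}$, so that $x>t_1$ by the cut axiom, and conclude
\[
\text{Ker}_{c^{1},\boldsymbol{t}}^{+}\ \subseteq\ \text{Ker}\,\rho_{\boldsymbol{t}}^{(x,t_2,t_3)}\ \subseteq\ \text{Ker}_{c^{1'},\boldsymbol{t}}^{-}.
\]
The four remaining collections are obtained simply by replacing the index $1$ with $2$ or $3$, so no new argument is needed. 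I do not anticipate a real obstacle here; the only subtle point is making sure the witness $x$ lies in the index set of the intersection on one side \emph{and} the sum on the other, which is exactly what the total order of cuts guarantees.
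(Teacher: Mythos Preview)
Your argument is correct. The paper does not supply its own proof of this lemma; it simply quotes the statement with a citation to \cite{crawley2015decomposition}, so there is nothing to compare against beyond noting that your direct verification---using the total order on cuts and a single witness $x$ to sandwich $\text{Im}^{+}$ inside $\text{Im}^{-}$ (resp.\ $\text{Ker}^{+}$ inside $\text{Ker}^{-}$)---is exactly the elementary mechanism behind the cited result.
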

    \vspace{0.3cm}
    \begin{lemma}{\cite{crawley2015decomposition}}\label{9}
        Let the collection of sections in $U$, $\mathcal{F}=\{(F_{\lambda}^{-},F_{\lambda}^{+})\}_{\lambda \in \Lambda}$, be disjoint, and $\mathcal{G}=\{(G_{\sigma}^{-},G_{\sigma}^{+})\}_{\sigma \in \Sigma}$ be any collection of sections in $U$. 
        Then the collection of sections in $U$
		$$\{(F_{\lambda}^{-}+G_{\sigma}^{-}\cup F_{\lambda}^{+},F_{\lambda}^{-}+G_{\sigma}^{+}\cup F_{\lambda}^{+})\}_{(\lambda,\sigma)\in \Lambda \times \Sigma}$$
		is disjoint.
    \end{lemma}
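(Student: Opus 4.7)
The plan is to verify the disjointness condition directly from the definition. Abbreviate $H_{\lambda,\sigma}^{\pm} := F_\lambda^- + G_\sigma^{\pm} \cap F_\lambda^+$, reading the $\cup$ in the statement as $\cap$ since that is the only interpretation yielding an honest subspace. I fix two distinct indices $(\lambda_1,\sigma_1) \neq (\lambda_2,\sigma_2)$ and aim to establish one of the two containments $H_{\lambda_1,\sigma_1}^+ \subseteq H_{\lambda_2,\sigma_2}^-$ or $H_{\lambda_2,\sigma_2}^+ \subseteq H_{\lambda_1,\sigma_1}^-$. The natural case split is on whether the $\Lambda$-components agree.

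The first case, $\lambda_1 \neq \lambda_2$, is straightforward and uses only the disjointness hypothesis on $\mathcal{F}$: one of $F_{\lambda_1}^+ \subseteq F_{\lambda_2}^-$ or $F_{\lambda_2}^+ \subseteq F_{\lambda_1}^-$ holds, and I take the former without loss of generality. Combining it with the trivial inclusions $F_{\lambda_1}^- \subseteq F_{\lambda_1}^+$ and $G_{\sigma_1}^+ \cap F_{\lambda_1}^+ \subseteq F_{\lambda_1}^+$, I obtain the chain $H_{\lambda_1,\sigma_1}^+ \subseteq F_{\lambda_1}^+ \subseteq F_{\lambda_2}^- \subseteq H_{\lambda_2,\sigma_2}^-$, which is what is required. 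The $\sigma$-coordinates play no role, so the refinement by $\mathcal{G}$ does not disturb the disjointness inherited from $\mathcal{F}$.

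The second case, $\lambda_1 = \lambda_2 = \lambda$ and $\sigma_1 \neq \sigma_2$, is more delicate. The common lower bound $F_\lambda^-$ can be absorbed into both sections via the modular identity $F_\lambda^- + (G_\sigma^{\pm} \cap F_\lambda^+) = (F_\lambda^- + G_\sigma^{\pm}) \cap F_\lambda^+$, valid because $F_\lambda^- \subseteq F_\lambda^+$. This rewrites each $H_{\lambda,\sigma}^{\pm}$ as the intersection of $F_\lambda^+$ with a larger subspace, and since intersection with a fixed subspace preserves inclusions, it reduces the comparison between $H_{\lambda,\sigma_1}^{\pm}$ and $H_{\lambda,\sigma_2}^{\pm}$ to a comparison between the enlarged subspaces $F_\lambda^- + G_{\sigma_1}^{\pm}$ and $F_\lambda^- + G_{\sigma_2}^{\pm}$ inside $U$.

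This second case is where I expect the main obstacle to lie. The hypothesis on $\mathcal{G}$ is minimal — only that it is a collection of sections, with no comparability between its members postulated — so the required comparison of the enlarged subspaces $F_\lambda^- + G_{\sigma_i}^{\pm}$ cannot be read off the stated data alone. My plan for bridging the gap is to consult the proof of the corresponding statement in \cite{crawley2015decomposition}, where I expect the pairwise comparability to emerge either from a tacit refinement convention (restricting $\mathcal{G}$ to $F_\lambda^+$ modulo $F_\lambda^-$ before forming the sections, which merges any two $\sigma$'s that become indistinguishable) or from the specific shape of the intended applications, in which $\mathcal{G}$ arises by restricting an already disjoint family of images or kernels as in Lemmas \ref{4} and \ref{10}. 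Once the correct comparability is identified, the case closes by the same inclusion chase as before, now executed inside $F_\lambda^+$; the residual bookkeeping is purely a matter of the distributive and modular identities for subspaces.
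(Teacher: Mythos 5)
The paper offers no proof of this lemma at all -- it is imported verbatim (with typos) from \cite{crawley2015decomposition} -- so there is nothing internal to compare your argument against; I can only assess it on its own terms. Your reading of $\cup$ as $\cap$ is the right correction (the parallel covering lemma later in the same subsection uses $\cap$), and your first case ($\lambda_1\neq\lambda_2$) is complete and correct: the chain $H^+_{\lambda_1,\sigma_1}\subseteq F^+_{\lambda_1}\subseteq F^-_{\lambda_2}\subseteq H^-_{\lambda_2,\sigma_2}$ needs nothing beyond the disjointness of $\mathcal{F}$.

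Your diagnosis of the second case is also correct, and it is worth saying plainly: the lemma is \emph{false} as stated. Take $U=\Bbbk^2$, let $\mathcal{F}$ consist of the single section $(0,U)$, and let $\mathcal{G}=\{(0,\langle e_1\rangle),(0,\langle e_2\rangle)\}$; the product collection is then $\{(0,\langle e_1\rangle),(0,\langle e_2\rangle)\}$, which is not disjoint. So no amount of modular-law manipulation can close the case $\lambda_1=\lambda_2$, $\sigma_1\neq\sigma_2$ from the stated hypotheses, and your decision to stop and flag the obstruction rather than fabricate a step is the right one. The missing ingredient is exactly the one you conjecture: in the source, $\mathcal{G}$ is required to be disjoint as well (the phrase ``any collection of sections'' is a mis-transcription). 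With that hypothesis restored the second case is a one-line monotonicity argument that does not even need the modular identity you set up: if $G^+_{\sigma_1}\subseteq G^-_{\sigma_2}$, then
\begin{equation*}
H^+_{\lambda,\sigma_1}=F^-_{\lambda}+G^+_{\sigma_1}\cap F^+_{\lambda}\;\subseteq\;F^-_{\lambda}+G^-_{\sigma_2}\cap F^+_{\lambda}=H^-_{\lambda,\sigma_2},
\end{equation*}
and symmetrically in the other direction. So your proof attempt is structurally sound and becomes a complete proof once the hypothesis on $\mathcal{G}$ is corrected; as written against the paper's literal statement, the remaining gap is unfillable because the statement itself is wrong. (For what it is worth, every application of this lemma in the paper -- e.g.\ in Proposition \ref{6} -- does feed in a disjoint collection for $\mathcal{G}$, so the corrected statement suffices for the paper's purposes.)
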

    \vspace{0.3cm}
    Because $\mathcal{V}_{\boldsymbol{t}}:=\{(V_{B,\boldsymbol{t}}^{-}, V_{B,\boldsymbol{t}}^{+})\}_{B: block \ni \boldsymbol{t}}$ is not disjoint, we cannot directly study the direct sum decomposition of $\mathbb{M}$ by considering $\mathcal{V}_{\boldsymbol{t}}$. 
    Thus, we define the disjoint section $\mathcal{F}_{\boldsymbol{t}}:=\{F_{B,\boldsymbol{t}}^{-},F_{B,\boldsymbol{t}}^{+}\}_{B:block \ni \boldsymbol{t}}$
    
    \begin{equation*}
		\begin{aligned}
		  &F_{B,\boldsymbol{t}}^{+}=\text{Im}_{B,\boldsymbol{t}}^{-}+V_{B,\boldsymbol{t}}^{+}=\text{Im}_{B,\boldsymbol{t}}^{-}+\text{Ker}_{B,\boldsymbol{t}}^{+}\cap \text{Im}_{B,\boldsymbol{t}}^{+},\\
          &F_{B,\boldsymbol{t}}^{-}=\text{Im}_{B,\boldsymbol{t}}^{-}+V_{B,\boldsymbol{t}}^{-}=\text{Im}_{B,\boldsymbol{t}}^{-}+\text{Ker}_{B,\boldsymbol{t}}^{-}\cap \text{Im}_{B,\boldsymbol{t}}^{+}. 
		\end{aligned}
    \end{equation*}
    \vspace{0.3cm}
    \begin{lemma}
		$F_{B,\boldsymbol{t}}^{+}=F_{B,\boldsymbol{t}}^{-}\oplus(\mathbb{M}_B)_{\boldsymbol{t}}$
    \end{lemma}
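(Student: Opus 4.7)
The plan is to leverage the direct sum decomposition $V_{B,\boldsymbol{t}}^{+}=V_{B,\boldsymbol{t}}^{-}\oplus (\mathbb{M}_B)_{\boldsymbol{t}}$ that has just been established (from the isomorphism $\pi_{\boldsymbol{t}}\colon M_B^{0}\to (\mathbb{M}_B)_{\boldsymbol{t}}$ together with Corollary~\ref{8} and the preceding proposition). The argument naturally splits into (i) verifying the sum equality $F_{B,\boldsymbol{t}}^{+}=F_{B,\boldsymbol{t}}^{-}+(\mathbb{M}_B)_{\boldsymbol{t}}$, and (ii) verifying that the sum is direct, i.e.\ $F_{B,\boldsymbol{t}}^{-}\cap (\mathbb{M}_B)_{\boldsymbol{t}}=0$.

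For step (i), I would substitute the known decomposition of $V_{B,\boldsymbol{t}}^{+}$ directly into the definition of $F_{B,\boldsymbol{t}}^{+}$:
$$F_{B,\boldsymbol{t}}^{+}=\text{Im}_{B,\boldsymbol{t}}^{-}+V_{B,\boldsymbol{t}}^{+}=\text{Im}_{B,\boldsymbol{t}}^{-}+V_{B,\boldsymbol{t}}^{-}+(\mathbb{M}_B)_{\boldsymbol{t}}=F_{B,\boldsymbol{t}}^{-}+(\mathbb{M}_B)_{\boldsymbol{t}},$$
which is immediate from the definition of $F_{B,\boldsymbol{t}}^{-}$.

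For step (ii), I expect the following argument. Take $x\in F_{B,\boldsymbol{t}}^{-}\cap (\mathbb{M}_B)_{\boldsymbol{t}}$ and write $x=a+b$ with $a\in \text{Im}_{B,\boldsymbol{t}}^{-}$ and $b\in V_{B,\boldsymbol{t}}^{-}$. The containments $\text{Im}_{B,\boldsymbol{t}}^{-}\subseteq \text{Im}_{B,\boldsymbol{t}}^{+}$ and $\text{Ker}_{B,\boldsymbol{t}}^{-}\subseteq \text{Ker}_{B,\boldsymbol{t}}^{+}$ (recorded in the preliminaries) yield both $V_{B,\boldsymbol{t}}^{-}\subseteq \text{Ker}_{B,\boldsymbol{t}}^{+}$ and $(\mathbb{M}_B)_{\boldsymbol{t}}\subseteq V_{B,\boldsymbol{t}}^{+}\subseteq \text{Ker}_{B,\boldsymbol{t}}^{+}$. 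Therefore $a=x-b$ belongs to $\text{Ker}_{B,\boldsymbol{t}}^{+}\cap \text{Im}_{B,\boldsymbol{t}}^{-}\subseteq V_{B,\boldsymbol{t}}^{-}$, so $x=a+b\in V_{B,\boldsymbol{t}}^{-}$; and since $V_{B,\boldsymbol{t}}^{-}\cap (\mathbb{M}_B)_{\boldsymbol{t}}=0$ by the choice of $M_B^0$, we conclude $x=0$.

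The only mildly delicate point—and what I regard as the main obstacle—is recognizing in step (ii) that although the decomposition $x=a+b$ a priori places $a$ only in $\text{Im}_{B,\boldsymbol{t}}^{-}$, the hypothesis that $x\in (\mathbb{M}_B)_{\boldsymbol{t}}$ forces $a$ to also lie in $\text{Ker}_{B,\boldsymbol{t}}^{+}$ (because $x$ and $b$ both do), which is exactly what is needed to push $a$ into $V_{B,\boldsymbol{t}}^{-}$. Everything else is routine bookkeeping with the definitions of $V_{B,\boldsymbol{t}}^{\pm}$ and $F_{B,\boldsymbol{t}}^{\pm}$ and uses no input beyond the previously established splitting of $V_{B,\boldsymbol{t}}^{+}$.
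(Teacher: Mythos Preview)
Your proof is correct and follows essentially the same approach as the paper: both deduce the sum equality immediately from $V_{B,\boldsymbol{t}}^{+}=V_{B,\boldsymbol{t}}^{-}\oplus (\mathbb{M}_B)_{\boldsymbol{t}}$, and for directness the paper writes the one-line chain $F_{B,\boldsymbol{t}}^{-}\cap (\mathbb{M}_B)_{\boldsymbol{t}} = F_{B,\boldsymbol{t}}^{-}\cap V_{B,\boldsymbol{t}}^{+}\cap (\mathbb{M}_B)_{\boldsymbol{t}} = V_{B,\boldsymbol{t}}^{-}\cap (\mathbb{M}_B)_{\boldsymbol{t}} = 0$, whose middle equality $F_{B,\boldsymbol{t}}^{-}\cap V_{B,\boldsymbol{t}}^{+}=V_{B,\boldsymbol{t}}^{-}$ is exactly what your step~(ii) spells out in detail.
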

    \begin{proof}
		From the definition of $F_{B,\boldsymbol{t}}^{\pm}$, we can easily know that $F_{B,\boldsymbol{t}}^{+}=F_{B,\boldsymbol{t}}^{-} + (\mathbb{M}_B)_{\boldsymbol{t}}$. 
		And, we have ${(\mathbb{M}_B)}_{\boldsymbol{t}} \subseteq V_{B,\boldsymbol{t}}^{+}$, so 
		$$F_{B,\boldsymbol{t}}^{-} \cap (\mathbb{M}_B)_{\boldsymbol{t}} = F_{B,\boldsymbol{t}}^{-} \cap V_{B,\boldsymbol{t}}^{+} \cap (\mathbb{M}_B)_{\boldsymbol{t}} = V_{B,\boldsymbol{t}}^{-}\cap (\mathbb{M}_B)_{\boldsymbol{t}} = 0$$
    \end{proof}
    \vspace{0.3cm}
    From this lemma, we see that we can study the direct sum of $\mathbb{M}$ by considering $\mathcal{F}_{\boldsymbol{t}}$. 

    We divide these type of blocks $B=({c_1}^+ \cap {c^1}^-)\times ({c_2}^+ \cap {c^2}^-) \times ({c_3}^+ \cap {c^3}^-)$ into following 5 types:
    \begin{itemize}
        \item $\mathcal{B}_1=\{ B| {c_2}^- = {c_2}^+ = {c_3}^- = {c^2}^+ = \emptyset \text{ and } {c^1}^+\neq \emptyset \}$; 
        \item $\mathcal{B}_2=\{ B| {c_1}^- = {c_1}^+ = {c_3}^- = {c^2}^+ = \emptyset \text{ and } {c^2}^+\neq \emptyset \}$; 
        \item $\mathcal{B}_3=\{ B| {c_1}^- = {c_1}^+ = {c_2}^- = {c^2}^+ = \emptyset \text{ and } {c^3}^+\neq \emptyset \}$; 
        \item $\mathcal{B}_4=\text{the set of all death blocks} \setminus \bigcup_{i=1}^3 \mathcal{B}_i$;
        \item $\mathcal{B}_5=\text{the set of all birth blocks}$. 
    \end{itemize}

    We first prove that in each individual type, the decomposition is a direct sum decomposition. 
    The proof process for the first four types is easy, but we need to make some small efforts to prove the fifth type. 
    \vspace{0.3cm}
    \begin{proposition}\label{6}
        Let $\mathcal{B}_i$ be a fixed block type.
		The submodules $\mathbb{M}_B$, where $B$ ranges over all blocks of the block type $\mathcal{B}_i$, are in direct sum, that is $\underset{B\in \mathcal{B}_i}{\sum} \mathbb{M}_B\cong \underset{B\in \mathcal{B}_i}{\bigoplus} \mathbb{M}_B$. 
    \end{proposition}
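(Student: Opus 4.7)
The plan is pointwise: it suffices to show, for every $\boldsymbol{t}\in \mathbb{R}^3$, that the family of sections $\{(F_{B,\boldsymbol{t}}^{-},F_{B,\boldsymbol{t}}^{+})\}_{B\in \mathcal{B}_i,\,B\ni \boldsymbol{t}}$ is disjoint in $\mathbb{M}_{\boldsymbol{t}}$. Once that is established, the preceding lemma supplies the splitting $F_{B,\boldsymbol{t}}^{+}=F_{B,\boldsymbol{t}}^{-}\oplus (\mathbb{M}_B)_{\boldsymbol{t}}$, so Lemma \ref{10} will give $\sum_{B\in \mathcal{B}_i}(\mathbb{M}_B)_{\boldsymbol{t}}=\bigoplus_{B\in \mathcal{B}_i}(\mathbb{M}_B)_{\boldsymbol{t}}$ at every point, and this pointwise directness lifts to directness of submodules of $\mathbb{M}$.

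For the strict layer types $\mathcal{B}_1,\mathcal{B}_2,\mathcal{B}_3$ only one pair of cuts varies across the family; I expect the corresponding $F_{B,\boldsymbol{t}}^{\pm}$ to collapse, up to a common term shared by all blocks of that type, to purely one-directional image and kernel pieces, and disjointness will then follow directly from the one-dimensional statement of Lemma \ref{4} applied to the active axis (with at most one invocation of Lemma \ref{9} to absorb the common passive factor).

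For the death type $\mathcal{B}_4$, all $c_i$ are trivial, so $\text{Im}_{B,\boldsymbol{t}}^{-}=0$ and $F_{B,\boldsymbol{t}}^{\pm}$ reduces to an expression determined only by the triple $(c^1,c^2,c^3)$. My plan is to start from the three coordinate-wise disjoint families of kernel sections in Lemma \ref{4} and iterate Lemma \ref{9} twice to produce a single disjoint family indexed by triples $(c^1,c^2,c^3)$, then verify using the explicit formulas for $\text{Ker}_{B,\boldsymbol{t}}^{\pm}$ that each $(F_{B,\boldsymbol{t}}^{-},F_{B,\boldsymbol{t}}^{+})$ is dominated by the corresponding triple section. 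The birth type $\mathcal{B}_5$ is symmetric: all $c^i$ are trivial, so $\text{Ker}_{B,\boldsymbol{t}}^{+}=\mathbb{M}_{\boldsymbol{t}}$ and $\text{Ker}_{B,\boldsymbol{t}}^{-}=0$, whence $F_{B,\boldsymbol{t}}^{+}=\text{Im}_{B,\boldsymbol{t}}^{+}$ and $F_{B,\boldsymbol{t}}^{-}=\text{Im}_{B,\boldsymbol{t}}^{-}$. Using the decomposition $\text{Im}_{B,\boldsymbol{t}}^{+}=\text{Im}_{c_1,\boldsymbol{t}}^{+}\cap \text{Im}_{c_2,\boldsymbol{t}}^{+}\cap \text{Im}_{c_3,\boldsymbol{t}}^{+}$ and the three-term sum expression for $\text{Im}_{B,\boldsymbol{t}}^{-}$, I will again iterate Lemma \ref{9} on the three coordinate-wise disjoint image families to obtain the desired disjointness.

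The main obstacle I anticipate is matching the algebraic shape of $F_{B,\boldsymbol{t}}^{\pm}$ to the shape produced by iterated applications of Lemma \ref{9}: the nested ``sum of intersections'' structure has to be arranged so that the finer (``$+$'') terms in each coordinate line up with the upper side of the combined section, and the coarser (``$-$'') terms with the lower side. This is slightly delicate in the $\mathcal{B}_4$ case because $V_{B,\boldsymbol{t}}^{+}$ carries the extra intersection $\text{Ker}_{c^1,\boldsymbol{t}}^{+}\cap \text{Ker}_{c^2,\boldsymbol{t}}^{+}\cap \text{Ker}_{c^3,\boldsymbol{t}}^{+}$, which does not come from a single coordinate; I plan to absorb it using the inclusions of Lemma \ref{7}, which place each $\text{Ker}_{c^i,\boldsymbol{t}}^{\pm}$ inside the image pieces along the other two axes, so that the mixed term can be rewritten in the form Lemma \ref{9} requires.
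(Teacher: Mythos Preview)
Your treatment of the layer types $\mathcal{B}_1,\mathcal{B}_2,\mathcal{B}_3$ matches the paper: only one cut varies, so Lemma~\ref{4} plus one pass of Lemma~\ref{9} gives disjointness of the relevant sections, and Lemma~\ref{10} finishes.

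For $\mathcal{B}_4$ and $\mathcal{B}_5$ your plan has a genuine gap. You propose to show that $\{(F_{B,\boldsymbol{t}}^{-},F_{B,\boldsymbol{t}}^{+})\}_{B\in\mathcal{B}_i}$ is disjoint by iterating Lemma~\ref{9} across the three axes, but this family is \emph{not} disjoint once two or more cuts vary independently. Take two death blocks $B,B'$ with ${c^1}^-\subsetneq{c^{1'}}^-$ but ${c^2}^-\supsetneq{c^{2'}}^-$: then $\text{Ker}_{c^2,\boldsymbol{t}}^{-}$ sits inside $F_{B,\boldsymbol{t}}^{-}$ yet need not sit inside $F_{B',\boldsymbol{t}}^{-}$, and symmetrically, so neither inclusion $F_{B,\boldsymbol{t}}^{+}\subseteq F_{B',\boldsymbol{t}}^{-}$ nor its reverse holds. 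The same incomparability obstructs the birth case. Moreover, even granting Lemma~\ref{9} iterates cleanly, the sections it produces have the nested form $F_\lambda^-+G_\sigma^{\pm}\cap F_\lambda^+$, which after two iterations does \emph{not} coincide with the explicit three-term sums defining $F_{B,\boldsymbol{t}}^{\pm}$; you would then still owe a proof that $(\mathbb{M}_B)_{\boldsymbol{t}}$ complements the lower part of those new sections, and Lemma~\ref{7} (which only compares kernels against images along \emph{other} axes) does not supply that.

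The paper abandons the section machinery entirely for these two types. For $\mathcal{B}_4$ it argues geometrically: among finitely many distinct death blocks containing $\boldsymbol{t}$, one of them, say $B_1$, is not covered by the others, so there is $\boldsymbol{u}\geq\boldsymbol{t}$ lying in $B_1$ alone; pushing a putative dependence relation forward by $\rho_{\boldsymbol{t}}^{\boldsymbol{u}}$ kills every term except the one from $B_1$, which survives by injectivity of $\rho_{\boldsymbol{t}}^{\boldsymbol{u}}$ on $(\mathbb{M}_{B_1})_{\boldsymbol{t}}$, a contradiction. For $\mathcal{B}_5$ it uses an inductive ordering trick (following Cochoy--Oudot's Proposition~6.6): among finitely many birth blocks one can select a $B_1$ that is minimal in a lexicographic sense, split the remaining blocks into those containing $B_1$ and those incomparable to it, and show directly that $(\mathbb{M}_{B_1})_{\boldsymbol{t}}\cap\sum_{i\ge2}(\mathbb{M}_{B_i})_{\boldsymbol{t}}\subseteq F_{B_1,\boldsymbol{t}}^{-}$, which meets $(\mathbb{M}_{B_1})_{\boldsymbol{t}}$ trivially. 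You should replace your iteration plan for these two cases with arguments of this kind.
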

    \begin{proof}
		Let $\boldsymbol{t} \in \mathbb{R}^3$. 
		We only need to prove the equation, $\underset{B\in \mathcal{B}_i}{\sum} (\mathbb{M}_B)_{\boldsymbol{t}}\cong \underset{B\in \mathcal{B}_i}{\bigoplus} (\mathbb{M}_B)_{\boldsymbol{t}}$
		
		\textbf{Case $\mathcal{B}_i$ with $i=1,2,3$}: We only need to prove the case of $i=1$, and the proof for $i=2, 3$ is similar. 
		From Lemma\ref{4}, we can know that $\{(\text{Im}_{c_1,\boldsymbol{t}}^{-},\text{Im}_{c_1,\boldsymbol{t}}^{+})\}_{{c_1}^+ \ni t_1}$ is disjoint. 
        Taking the intersection of all the spaces in this collection with $\text{Im}_{c_2,\boldsymbol{t}}^{+}\cap \text{Im}_{c_3,\boldsymbol{t}}^{+}$, we deduce that 
        $$\{(\text{Im}_{c_1,\boldsymbol{t}}^{-}\cap \text{Im}_{c_2,\boldsymbol{t}}^{+} \cap \text{Im}_{c_3,\boldsymbol{t}}^{+},\text{Im}_{c_1,\boldsymbol{t}}^{+}\cap \text{Im}_{c_2,\boldsymbol{t}}^{+} \cap \text{Im}_{c_3,\boldsymbol{t}}^{+})\}_{{c_1}^+ \ni t_1} = \{(\text{Im}_{B,\boldsymbol{t}}^{-},\text{Im}_{B,\boldsymbol{t}}^{+})\}_{B:\mathcal{B}_1 \ni \boldsymbol{t}}$$ 
		is also disjoint. 
        Hence, by Lemma\ref{9}, the collection of subspaces $\{(\mathbb{M}_B)_{\boldsymbol{t}}\}_{B:\mathcal{B}_1 \ni \boldsymbol{t}}$ is in direct sum.

		\textbf{Case $\mathcal{B}_4$}: Consider any finite collection of distinct death quadrants $B_{1},B_{2},\cdots,B_{m}$ that contain $\boldsymbol{t}$. 
        Since all of them are distinct, there must exist one (denoted as $B_{1}$) that is not contained within the union of the others. 
        Therefore, there exists some $\boldsymbol{u}\geq \boldsymbol{t}$ such that $\boldsymbol{u}\in B_{1}-\bigcup_{i\geq 2} B_{i}$. 
		Suppose there is some relation $\sum_{i=1}^{m} \alpha_i =0$ with $\alpha_i \in (\mathbb{M}_{B_{i}})_{\boldsymbol{t}}$ non-zero for all $i$. 
        Due to the linearity of $\rho_{\boldsymbol{t}}^{\boldsymbol{u}}$, it follows that $\sum_{i=1}^{m} \rho_{\boldsymbol{t}}^{\boldsymbol{u}}(\alpha_i) = 0$. 
		However, $\rho_{\boldsymbol{t}}^{\boldsymbol{u}}(\alpha_i)=0$ for any $i \geq 2$ and $\rho_{\boldsymbol{t}}^{\boldsymbol{u}}(\alpha_1) \neq 0$ due to $\boldsymbol{u}\in B_{1}-\bigcup_{i\geq 2} B_{i}$. This raises a contradiction. 
		
		\textbf{Case $\mathcal{B}_5$}: It suffices to show that, for any finite collection of different birth quadrants $B_1, B_2,\cdots, B_m$, there exists at least one of them ( e.g., $\mathbb{B}_1$) whose corresponding subspace $(\mathbb{M}_{B_1})_{\boldsymbol{t}}\subseteq M_{\boldsymbol{t}}$ is in direct sum with the subspaces corresponding to the other blocks in the collection. 
        Therefore, the result is obtained by a straightforward induction on the size $m$ of the collection. 
		
		Let $B_1,B_2,\cdots,B_m$ be such a collection and each block $B_i={c_{1,i}}^+ \times {c_{2,i}}^+ \times {c_{3,i}}^+$. By reordering if necessary, we can suppose that $B_1$ satisfies
		$${c_{1,1}}^+ \subseteq \underset{i>1}{\bigcap} {c_{1,i}}^+ , 
		{c_{2,1}}^+ \subseteq \underset{c_{1,i}=c_{1,1}}{\underset{i>1}{\bigcap}} {c_{2,i}}^+ , 
        {c_{3,1}}^+ \subseteq \underset{c_{2,i}=c_{2,1}}{\underset{c_{1,i}=c_{1,1}}{\underset{i>1}{\bigcap}}}{c_{3,i}}^+ .
		$$
		
        From the assumption of $B_1$, we can get that $B_1$ does not contain any other blocks. 
        Therefore, by reordering, we can divide these blocks into two subcollections: the ones (denoted as $B_2,\cdots, B_k$) contain $B_1$, while the others (denoted as $B_{k+1},\cdots, B_m$) neither contain $B_1$ nor are not contained by $B_1$.

        In a manner analogous to the proof of Proposition 6.6 in Cochoy and Oudot's work\cite{cochoy2020decomposition}, we deduce that $(\mathbb{M}_{B_1})_{\boldsymbol{t}} \cap (\sum_{i=2}^{m}(\mathbb{M}_{B_i})_{\boldsymbol{t}})\subseteq F_{B_1,\boldsymbol{t}}^{-}$. 
		Note $(\mathbb{M}_{B_1})_{\boldsymbol{t}} \cap F_{B_1,\boldsymbol{t}}^{-} = 0$, then the result follows. 
    \end{proof}
    \vspace{0.3cm}
    \begin{proposition}
		The submodules $\underset{B:\mathcal{B}_1}{\bigoplus} \mathbb{M}_B$, $\underset{B:\mathcal{B}_2}{\bigoplus} \mathbb{M}_B$, $\underset{B:\mathcal{B}_3}{\bigoplus} \mathbb{M}_B$, $\underset{B:\mathcal{B}_4}{\bigoplus} \mathbb{M}_B$ and $\underset{B:\mathcal{B}_5}{\bigoplus} \mathbb{M}_B$ are in direct sum, that is 
		$$\underset{B:\mathcal{B}_1}{\bigoplus} \mathbb{M}_B + \underset{B:\mathcal{B}_2}{\bigoplus} \mathbb{M}_B + \underset{B:\mathcal{B}_3}{\bigoplus} \mathbb{M}_B + \underset{B:\mathcal{B}_4}{\bigoplus} \mathbb{M}_B + \underset{B:\mathcal{B}_5}{\bigoplus} \mathbb{M}_B  \cong \underset{B:\text{blocks}}{\bigoplus} \mathbb{M}_B $$
    \end{proposition}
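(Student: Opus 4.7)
The plan is to reduce to a pointwise statement in $\mathbb{M}_{\boldsymbol{t}}$ and exploit the disjoint-sections machinery already in place. Fix $\boldsymbol{t}\in\mathbb{R}^3$. By the preceding proposition, the sum $\sum_{B\in\mathcal{B}_i}(\mathbb{M}_B)_{\boldsymbol{t}}$ is already a direct sum inside each individual class $\mathcal{B}_i$, so the task reduces to showing that the five resulting direct sums are themselves in direct sum in $\mathbb{M}_{\boldsymbol{t}}$. Since the preceding lemma exhibits $(\mathbb{M}_B)_{\boldsymbol{t}}$ as a complement of $F_{B,\boldsymbol{t}}^-$ in $F_{B,\boldsymbol{t}}^+$, it suffices, by Lemma \ref{10}, to verify that the family of sections $\{(F_{B,\boldsymbol{t}}^-,F_{B,\boldsymbol{t}}^+)\}_{B\ni\boldsymbol{t}}$ is disjoint as $B$ ranges over blocks of all five types simultaneously.

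The disjointness check proceeds by a case analysis on the pair of classes. For pairs involving $\mathcal{B}_5$, every $c^i$ of the birth block is trivial whereas at least one $c^i$ of the other block is non-trivial, and a short computation using Lemma \ref{4} on the distinguishing coordinate axis shows that the image filtration of the death/layer block drops into $\text{Im}_{B_5,\boldsymbol{t}}^-$, giving $F_{B',\boldsymbol{t}}^+\subseteq F_{B_5,\boldsymbol{t}}^-$. For pairs $(\mathcal{B}_i,\mathcal{B}_4)$ with $i\in\{1,2,3\}$ the argument is similar but with the roles of the two blocks reversed along whichever axis is non-trivial only for the $\mathcal{B}_4$-block. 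For pairs $(\mathcal{B}_i,\mathcal{B}_j)$ with $i\neq j$ in $\{1,2,3\}$, the two blocks are ``skew'' in the sense that each has a non-trivial death cut along a coordinate the other has trivial; here I plan to use Lemma \ref{7} to push $\text{Ker}_{c^i,\boldsymbol{t}}^{\pm}$ inside the image spaces corresponding to the other block's trivial directions, and then apply the axis-decomposition of Lemma \ref{3} to separate $F_{B,\boldsymbol{t}}^+\cap F_{B',\boldsymbol{t}}^+$ into a sum of axis-aligned ``$-$''-pieces, each lying in one of the two $F^-$'s.

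Once all cross-type pair disjointnesses are dispatched and combined with the intra-class disjointness implicit in the preceding proposition, Lemma \ref{10} yields $\sum_{B\ni\boldsymbol{t}}(\mathbb{M}_B)_{\boldsymbol{t}}=\bigoplus_{B\ni\boldsymbol{t}}(\mathbb{M}_B)_{\boldsymbol{t}}$ pointwise, and assembling over $\boldsymbol{t}$ produces the claimed module-level isomorphism. The main obstacle I anticipate is precisely the skew pair $(\mathcal{B}_i,\mathcal{B}_j)$ for distinct $i,j\in\{1,2,3\}$: neither block's death filtration dominates the other, so one cannot conclude a one-line inclusion from cut containment alone. This is also where 3-parameter strong exactness enters in an essential way, via the kernel/image decompositions of Lemma \ref{3}; I expect the verification here to be the longest part of the proof but to follow the template of the $\mathcal{B}_5$ case in Proposition \ref{6} almost verbatim, using an induction on the size of the finite subcollection of blocks and picking a distinguished block whose defining cuts are minimal with respect to the natural componentwise partial order.
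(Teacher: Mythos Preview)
Your plan to deduce everything from disjointness of the family $\{(F_{B,\boldsymbol{t}}^-,F_{B,\boldsymbol{t}}^+)\}$ as $B$ ranges over all five types cannot succeed, because that family is \emph{not} disjoint across types. Take $\mathbb{M}=\Bbbk_{B_1}\oplus\Bbbk_{B_2}$ with $B_1=[0,1]\times\mathbb{R}\times\mathbb{R}\in\mathcal{B}_1$ and $B_2=\mathbb{R}\times[0,1]\times\mathbb{R}\in\mathcal{B}_2$, and let $\boldsymbol{t}=(\tfrac12,\tfrac12,\tfrac12)$. A direct computation gives $F_{B_1,\boldsymbol{t}}^+=F_{B_2,\boldsymbol{t}}^+=\mathbb{M}_{\boldsymbol{t}}\cong\Bbbk^2$, while $F_{B_1,\boldsymbol{t}}^-$ and $F_{B_2,\boldsymbol{t}}^-$ are the two distinct coordinate lines; neither inclusion required by disjointness holds. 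The same failure already occurs \emph{within} $\mathcal{B}_4$ and within $\mathcal{B}_5$: notice that the proof of Proposition~\ref{6} invokes section-disjointness only for $\mathcal{B}_1,\mathcal{B}_2,\mathcal{B}_3$ separately, and handles $\mathcal{B}_4$ and $\mathcal{B}_5$ by completely different arguments. Your proposed fallback (``induction on the size of the finite subcollection'') is really an admission that Lemma~\ref{10} does not apply, without a replacement mechanism spelled out.

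The paper's argument is both simpler and entirely section-free for the cross-type part. One verifies, for a fixed $\boldsymbol{t}$, the four successive intersection conditions (separating first $\mathcal{B}_5$ from the rest, then $\mathcal{B}_1$ from $\mathcal{B}_2\cup\mathcal{B}_3\cup\mathcal{B}_4$, and so on) by a propagation trick: write a hypothetical nonzero element of the intersection both as $\sum\alpha_i$ with $\alpha_i\in(\mathbb{M}_{B_i})_{\boldsymbol{t}}$ for blocks of one type and as $\sum\beta_j$ with $\beta_j\in(\mathbb{M}_{B_j'})_{\boldsymbol{t}}$ for blocks of the remaining types, then choose $\boldsymbol{u}\geq\boldsymbol{t}$ with suitably large coordinates so that $\boldsymbol{u}$ still lies in every $B_i$ but lies outside every $B_j'$. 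Since $\rho_{\boldsymbol{t}}^{\boldsymbol{u}}$ is injective on each $(\mathbb{M}_{B_i})_{\boldsymbol{t}}$ and kills each $(\mathbb{M}_{B_j'})_{\boldsymbol{t}}$, applying it gives a nonzero vector equal to zero. This uses nothing beyond the shapes of the blocks and the already-established intra-type direct sum; Lemmas~\ref{3} and~\ref{7} and the strong-exactness hypothesis play no role in this step.
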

    \begin{proof}
		We will divide the proof into four parts, 
		\begin{itemize}
		  \item $(\underset{B:\mathcal{B}_5}{\bigoplus}(\mathbb{M}_B)_{\boldsymbol{t}})\cap (\underset{B:\mathcal{B}_1}{\bigoplus}(\mathbb{M}_B)_{\boldsymbol{t}} + \underset{B:\mathcal{B}_2}{\bigoplus}(\mathbb{M}_B)_{\boldsymbol{t}} + \underset{B:\mathcal{B}_3}{\bigoplus}(\mathbb{M}_B)_{\boldsymbol{t}} + \underset{B:\mathcal{B}_4}{\bigoplus}(\mathbb{M}_B)_t)=0$
		  \item $(\underset{B:\mathcal{B}_1}{\bigoplus}(\mathbb{M}_B)_{\boldsymbol{t}})\cap (\underset{B:\mathcal{B}_2}{\bigoplus}(\mathbb{M}_B)_{\boldsymbol{t}} + \underset{B:\mathcal{B}_3}{\bigoplus}(\mathbb{M}_B)_{\boldsymbol{t}} + \underset{B:\mathcal{B}_4}{\bigoplus}(\mathbb{M}_B)_{\boldsymbol{t}})=0$
		  \item $(\underset{B:\mathcal{B}_2}{\bigoplus}(\mathbb{M}_B)_{\boldsymbol{t}})\cap (\underset{B:\mathcal{B}_3}{\bigoplus}(\mathbb{M}_B)_{\boldsymbol{t}} + \underset{B:\mathcal{B}_4}{\bigoplus}(\mathbb{M}_B)_{\boldsymbol{t}})=0$
		  \item $(\underset{B:\mathcal{B}_3}{\bigoplus}(\mathbb{M}_B)_{\boldsymbol{t}})\cap (\underset{B:\mathcal{B}_4}{\bigoplus}(\mathbb{M}_B)_{\boldsymbol{t}})=0$
		\end{itemize}
		
		(1) prove that $(\underset{B:\mathcal{B}_5}{\bigoplus}(\mathbb{M}_B)_{\boldsymbol{t}})\cap (\underset{B:\mathcal{B}_1}{\bigoplus}(\mathbb{M}_B)_{\boldsymbol{t}} + \underset{B:\mathcal{B}_2}{\bigoplus}(\mathbb{M}_B)_{\boldsymbol{t}} + \underset{B:\mathcal{B}_3}{\bigoplus}(\mathbb{M}_B)_{\boldsymbol{t}} + \underset{B:\mathcal{B}_4}{\bigoplus}(\mathbb{M}_B)_{\boldsymbol{t}})=0$ 
		
		Note that if $\boldsymbol{u}=(u_1,u_2,u_3) \in \mathbb{R}^3$ is large enough, then we can know that $\boldsymbol{u} \in B$ for any block $B \in \mathcal{B}_5$ but $\boldsymbol{u}$ is not in any other blocks. we only need demand $u_1,u_2,u_3$ are large enough. 
		
		Let $\alpha$ be a non-zero vector and be in the intersection. 
        It can be decomposed as a linear combination of non-zero vectors $\alpha_1,\cdots,\alpha_n$ from the summands of a finite number of blocks $B_1, B_2,\cdots, B_n$ in $\mathcal{B}_5$. 
        Simultaneously, $\alpha$ can be decomposed as a linear combination of non-zero vectors $\beta_1,\cdots,\beta_m$ from the summands of a finite number of blocks $B_1^{\prime},\cdots, B_m^{\prime}$ of other types: $\sum_{i=1}^{n}\alpha_i=\alpha=\sum_{j=1}^{m}\beta_j$. 
		
		Select a point $\boldsymbol{u} \in \mathbb{R}^3$ so that $\boldsymbol{u}$ is sufficiently large to lie outside the blocks $B_1^{\prime},\cdots,B_n^{\prime}$. 
        What's more, $\boldsymbol{u}$ still lies in the birth quadrants $B_1,\cdots,B_n$. 
		Thus $\rho_{\boldsymbol{t}}^{\boldsymbol{u}}(\sum_{i=1}^{n}\alpha_i)\neq 0$ but $\rho_{\boldsymbol{t}}^{\boldsymbol{u}}(\sum_{j=1}^{m}\beta_i) = 0$. 
		This is a contradiction. 
		
		(2) prove that $(\underset{B:\mathcal{B}_1}{\bigoplus}(\mathbb{M}_B)_{\boldsymbol{t}})\cap (\underset{B:\mathcal{B}_2}{\bigoplus}(\mathbb{M}_B)_{\boldsymbol{t}} + \underset{B:\mathcal{B}_3}{\bigoplus}(\mathbb{M}_B)_{\boldsymbol{t}} + \underset{B:\mathcal{B}_4}{\bigoplus}(\mathbb{M}_B)_{\boldsymbol{t}})=0$, $(\underset{B:\mathcal{B}_2}{\bigoplus}(M_B)_{\boldsymbol{t}})\cap (\underset{B:\mathcal{B}_3}{\bigoplus}(M_B)_{\boldsymbol{t}} \oplus \underset{B:\mathcal{B}_4}{\bigoplus}(M_B)_{\boldsymbol{t}})=0$ and $(\underset{B:\mathcal{B}_3}{\bigoplus}(\mathbb{M}_B)_{\boldsymbol{t}})\cap (\underset{B:\mathcal{B}_4}{\bigoplus}(\mathbb{M}_B)_{\boldsymbol{t}})=0$. 
		
		Similar to (1), we can also choose a point $\boldsymbol{u} \in \mathbb{R}^3$ so that it lies outside the blocks in $\mathcal{B}_1$ but is not in the blocks in $\mathcal{B}_2,\mathcal{B}_3,\mathcal{B}_4$. we need only to demand $u_1,u_2$ are large enough. The remaining processes are almost identical to (1).
		
		(3) prove that $(\underset{B:\mathcal{B}_2}{\bigoplus}(\mathbb{M}_B)_{\boldsymbol{t}})\cap (\underset{B:\mathcal{B}_3}{\bigoplus}(\mathbb{M}_B)_{\boldsymbol{t}} + \underset{B:\mathcal{B}_4}{\bigoplus}(\mathbb{M}_B)_{\boldsymbol{t}})=0$ and $(\underset{B:\mathcal{B}_3}{\bigoplus}(\mathbb{M}_B)_{\boldsymbol{t}})\cap (\underset{B:\mathcal{B}_4}{\bigoplus}(\mathbb{M}_B)_{\boldsymbol{t}})=0$. 
		
		They are treated similarly to (2). 
    \end{proof}
    \vspace{0.3cm}
    Subsequently, we will prove that $\mathbb{M}=\underset{B: block}{\sum}\mathbb{M}_B$. Then we need the notation of covering of sections\cite{crawley2015decomposition}. 
    For any collection of sections $\{(F_{\lambda}^-,F_{\lambda}^-)\}_{\lambda \in \Lambda}$, we say that $\{(F_{\lambda}^-,F_{\lambda}^-)\}_{\lambda \in \Lambda}$ covers a vector space $U$ if for every proper subspace $X\subsetneq U$ there exists a $\lambda \in \Lambda$ satisfying $$X+F_{\lambda}^{-}\neq X+F_{\lambda}^{+}.$$ 
    This collection is said to strongly cover $U$, if for all subspaces $X \subsetneq U$ and $Z \nsubseteq X$ there exists a $\lambda \in \Lambda$ so that $$ X+(F_{\lambda}^{-}\cap Z) \neq X+(F_{\lambda}^{+}\cap Z).$$
    The validity of employing covering sections is substantiated by the subsequent lemma from the reference \cite{crawley2015decomposition}.
    \vspace{0.3cm}
    \begin{lemma}{\cite{crawley2015decomposition}}
		Let $\{(F_{\lambda}^{-},F_{\lambda}^{+})\}_{\lambda \in \Lambda}$ be a collection of sections that covers $U$. 
        For every $\lambda \in \Lambda$, suppose $V_{\lambda}$ is a subspace of $U$ satisfying $F_{\lambda}^{+}=V_{\lambda} \oplus F_{\lambda}^{-}$. 
        It follows that, $U=\sum_{\lambda \in \Lambda} V_{\lambda}$. 
    \end{lemma}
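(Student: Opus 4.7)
The plan is a short argument by contradiction, leveraging precisely the covering property that has been set up to make complements combine to the whole space. Set $X := \sum_{\lambda \in \Lambda} V_{\lambda}$, which is a subspace of $U$. The goal is to prove $X = U$, so suppose toward contradiction that $X \subsetneq U$.

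Apply the covering hypothesis to this proper subspace $X$: by definition of covering there exists an index $\lambda \in \Lambda$ such that $X + F_{\lambda}^{-} \neq X + F_{\lambda}^{+}$. I then derive a contradiction by expanding $X + F_{\lambda}^{+}$ using the hypothesis $F_{\lambda}^{+} = V_{\lambda} \oplus F_{\lambda}^{-}$. Since $V_{\lambda}$ is one of the summands defining $X$, we have $V_{\lambda} \subseteq X$, so
$$
X + F_{\lambda}^{+} \;=\; X + V_{\lambda} + F_{\lambda}^{-} \;=\; X + F_{\lambda}^{-},
$$
directly contradicting the choice of $\lambda$. Hence $X = U$, proving $U = \sum_{\lambda \in \Lambda} V_{\lambda}$.

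I do not expect any real obstacle; the statement is essentially the tautological content of the definition of a covering family. The only point that deserves a brief check in writing is that every $V_{\lambda}$ is genuinely contained in $X$, which is immediate since $X$ is defined as the sum of all the $V_{\mu}$'s. Everything else is formal manipulation of subspace sums and the direct sum decomposition $F_{\lambda}^{+} = V_{\lambda} \oplus F_{\lambda}^{-}$.
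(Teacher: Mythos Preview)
Your proof is correct. The paper does not supply its own proof of this lemma; it is simply quoted from \cite{crawley2015decomposition}. Your contradiction argument via $X=\sum_{\lambda}V_{\lambda}$ and the decomposition $F_{\lambda}^{+}=V_{\lambda}\oplus F_{\lambda}^{-}$ is the standard one and matches what one finds in that reference.
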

    \vspace{0.3cm}
    \begin{lemma}{\cite{crawley2015decomposition}}
        Let $\{(F_{\lambda}^{-},F_{\lambda}^{+})\}_{\lambda \in \Lambda}$ and $\{G_{\sigma}^{-},G_{\sigma}^{+}\}_{\sigma \in \Sigma}$ be two collections of sections, where the former covers $U$ and the latter strongly covers $U$. 
        Then the following collection covers $U$: 
		$$\{(F_{\lambda}^{-}+G_{\sigma}^{-}\cap F_{\lambda}^{+},F_{\lambda}^{-}+G_{\sigma}^{+}\cap F_{\lambda}^{+})\}_{(\lambda \times \sigma)\in \Lambda \times \Sigma}.$$
    \end{lemma}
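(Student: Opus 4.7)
My plan is to unwind both definitions and assemble the witness $(\lambda,\sigma)$ in two stages. Fix an arbitrary proper subspace $X \subsetneq U$; the task is to produce a pair $(\lambda,\sigma)\in\Lambda\times\Sigma$ with
$$X + F_{\lambda}^{-} + G_{\sigma}^{-}\cap F_{\lambda}^{+} \;\neq\; X + F_{\lambda}^{-} + G_{\sigma}^{+}\cap F_{\lambda}^{+}.$$

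First, I would apply the covering hypothesis for $\{(F_{\lambda}^{-},F_{\lambda}^{+})\}_{\lambda\in\Lambda}$ directly to the subspace $X$, obtaining some $\lambda\in\Lambda$ with $X + F_{\lambda}^{-}\neq X + F_{\lambda}^{+}$. Set $X' := X + F_{\lambda}^{-}$ and $Z := F_{\lambda}^{+}$. The strict inequality forces $X'\subsetneq X' + Z$, equivalently $Z\not\subseteq X'$; it also forces $X'\subsetneq U$, since if $X'=U$ then $X+F_{\lambda}^{+}$ would equal $U$ as well, contradicting the strict inequality just obtained.

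Next I would feed this data into the strong covering hypothesis on $\{(G_{\sigma}^{-},G_{\sigma}^{+})\}_{\sigma\in\Sigma}$: since $X'\subsetneq U$ and $Z\not\subseteq X'$, there exists $\sigma\in\Sigma$ with $X' + G_{\sigma}^{-}\cap Z \neq X' + G_{\sigma}^{+}\cap Z$. Substituting back $X'=X+F_{\lambda}^{-}$ and $Z=F_{\lambda}^{+}$ yields precisely the required inequality, so $(\lambda,\sigma)$ is the witness we sought.

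The argument is essentially formal and I do not foresee any substantive obstacle: the only point that demands mild care is verifying that the intermediate subspace $X'$ is still proper in $U$ and that $Z=F_{\lambda}^{+}$ escapes it, both of which drop out of the strict inclusion produced in the first step. This also clarifies why \emph{strong} covering, rather than ordinary covering, is the natural hypothesis on the $G$-collection: ordinary covering would only detect growth against the ambient space $U$, whereas here we must detect growth inside the slice $Z=F_{\lambda}^{+}$ selected by the first step.
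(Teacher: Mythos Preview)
Your argument is correct. The two-stage construction---first selecting $\lambda$ via the covering hypothesis on the $F$-collection, then selecting $\sigma$ via the strong covering hypothesis on the $G$-collection applied to $X'=X+F_\lambda^-$ and $Z=F_\lambda^+$---is exactly the intended proof, and your verification that $X'\subsetneq U$ and $Z\not\subseteq X'$ is sound.

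Note, however, that the paper does not supply its own proof of this lemma: it is simply quoted from \cite{crawley2015decomposition} without argument. So there is nothing in the present paper to compare your proof against; your write-up fills in what the paper leaves to the reference, and does so correctly.
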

    \vspace{0.3cm}
    \begin{lemma}{\cite{crawley2015decomposition}}\label{5}
		Given a fixed $\boldsymbol{t}=(t_1,t_2,t_3)\in \mathbb{R}^3$, for any subsets $X\subsetneq \mathbb{M}_{\boldsymbol{t}}$ and $Z\nsubseteq X$, there is a cut $c_1$ with $t_1 \in {c_1}^+$ such that $\text{Im}_{c_1,\boldsymbol{t}}^{-}\cap Z \subseteq X \nsupseteq \text{Im}_{c_1,\boldsymbol{t}}^{+} \cap Z$. Similarly, there are cuts $c_2$ with $t_2 \in {c_2}^+$ and $c_3$ with $t_3 \in {c_3}^+$ such that $\text{Im}_{c_2,\boldsymbol{t}}^{-}\cap Z \subseteq X \nsupseteq \text{Im}_{c_2,\boldsymbol{t}}^{+} \cap Z$ and $\text{Im}_{c_3,\boldsymbol{t}}^{-}\cap Z \subseteq X \nsupseteq \text{Im}_{c_3,\boldsymbol{t}}^{+} \cap Z$. Same for kernels. 
    \end{lemma}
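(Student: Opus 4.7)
The plan is to construct each of the six required cuts by splitting $\mathbb{R}$ along the natural threshold where the relevant monotone family of images (resp.\ kernels), intersected with $Z$, ceases to fit inside $X$. The pointwise finite-dimensionality hypothesis enters through Lemma~\ref{1}, which guarantees that these thresholds are actually attained and thus transfer from individual $\text{Im}\,\rho_{(x,t_2,t_3)}^{\boldsymbol{t}}$ or $\text{Ker}\,\rho_{\boldsymbol{t}}^{(x,t_2,t_3)}$ to the limit objects $\text{Im}_{c_1,\boldsymbol{t}}^{\pm}$ and $\text{Ker}_{c^1,\boldsymbol{t}}^{\pm}$.

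For the image cut $c_1$, I use the non-decreasing family $\{\text{Im}\,\rho_{(x,t_2,t_3)}^{\boldsymbol{t}}\}_{x\in\mathbb{R}}$ and set
$$c_1^-:=\{x\in\mathbb{R}:\text{Im}\,\rho_{(x,t_2,t_3)}^{\boldsymbol{t}}\cap Z\subseteq X\},\qquad c_1^+:=\mathbb{R}\setminus c_1^-.$$
Monotonicity in $x$ makes $c_1^-$ a lower set, so this is a genuine cut, and $t_1\in c_1^+$ follows from $\text{Im}\,\rho_{(t_1,t_2,t_3)}^{\boldsymbol{t}}\cap Z=Z\nsubseteq X$. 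Lemma~\ref{1} then produces $x_+\in c_1^+\cap(-\infty,t_1]$ realizing $\text{Im}_{c_1,\boldsymbol{t}}^{+}$ and $x_-\in c_1^-\cup\{-\infty\}$ realizing $\text{Im}_{c_1,\boldsymbol{t}}^{-}$, where the $-\infty$ case just gives $0$. The defining property of $c_1^{\pm}$ then directly yields $\text{Im}_{c_1,\boldsymbol{t}}^{-}\cap Z\subseteq X\nsupseteq\text{Im}_{c_1,\boldsymbol{t}}^{+}\cap Z$; the cuts $c_2$ and $c_3$ are produced identically by varying the second and third coordinates.

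The kernel cut $c^1$ is dual, built from $\{\text{Ker}\,\rho_{\boldsymbol{t}}^{(x,t_2,t_3)}\}_{x\geq t_1}$ via
$$c^1_-:=\{x\in\mathbb{R}: x\leq t_1 \text{ or }\text{Ker}\,\rho_{\boldsymbol{t}}^{(x,t_2,t_3)}\cap Z\subseteq X\},\qquad c^1_+:=\mathbb{R}\setminus c^1_-,$$
which places $t_1\in {c^1}^-$ automatically. Lemma~\ref{1} supplies $x_-\in {c^1}^-\cap[t_1,+\infty)$ realizing $\text{Ker}_{c^1,\boldsymbol{t}}^{-}$ and $x_+\in {c^1}^+\cup\{+\infty\}$ realizing $\text{Ker}_{c^1,\boldsymbol{t}}^{+}$; in the edge case $x_+=+\infty$ the extension $\mathbb{M}_{(+\infty,\cdot,\cdot)}=0$ forces $\text{Ker}_{c^1,\boldsymbol{t}}^{+}=\mathbb{M}_{\boldsymbol{t}}$, so $\text{Ker}_{c^1,\boldsymbol{t}}^{+}\cap Z=Z\nsubseteq X$ regardless. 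The remaining cuts $c^2$ and $c^3$ follow by the same recipe. The only real subtlety is the bookkeeping for these trivial edge cases ($c_1^-=\emptyset$ or ${c^1}^+=\emptyset$), and this is exactly what the compactification to $[-\infty,+\infty]^3$ in Lemma~\ref{1} was designed to absorb; the argument then reduces to a one-dimensional monotonicity check carried out six times.
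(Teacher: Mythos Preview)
The paper does not give its own proof of this lemma; it is quoted from \cite{crawley2015decomposition} and simply invoked. Your argument is correct and is precisely the standard construction from that reference: define the cut as the threshold at which the monotone family $x\mapsto\text{Im}\,\rho_{(x,t_2,t_3)}^{\boldsymbol{t}}\cap Z$ (resp.\ $x\mapsto\text{Ker}\,\rho_{\boldsymbol{t}}^{(x,t_2,t_3)}\cap Z$) first escapes $X$, then use the \textbf{pfd} realization from Lemma~\ref{1} to pass from individual indices to the limit quantities $\text{Im}_{c_1,\boldsymbol{t}}^{\pm}$ and $\text{Ker}_{c^1,\boldsymbol{t}}^{\pm}$, with the compactification handling the trivial cases.
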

    \vspace{0.3cm}
    Next, we will prove that the 3-parameter persistence module $\mathbb{M}$, which is \textbf{pfd} and satisfies 3-parameter strong exactness, is the direct sum of block modules.

    Before proving the main theorem, we need to redivide blocks.
    \begin{itemize}
		\item $\mathcal{B}_1=\{ B| {c_2}^- = {c^2}^+ = {c_3}^- = {c^3}^+ =\emptyset\text{ and } {c_1}^-\neq \emptyset\neq {c^1}^+ \}$; 
        \item $\mathcal{B}_2=\{ B| {c_1}^- = {c^1}^+ = {c_3}^- = {c^3}^+ =\emptyset\text{ and } {c_2}^-\neq \emptyset\neq {c^2}^+ \}$; 
        \item $\mathcal{B}_3=\{ B| {c_1}^- = {c^1}^+ = {c_2}^- = {c^2}^+ =\emptyset\text{ and } {c_3}^-\neq \emptyset\neq {c^3}^+ \}$; 
		\item $\mathcal{B}_{4}=\text{the set of all death blocks}\setminus\{\mathbb{R}^n \} $
		\item $\mathcal{B}_{5}=\text{the set of all birth blocks}$
    \end{itemize}

    To prove that the direct sum decomposition of $\mathbb{M}$, we need to define a new 3-parameter persistence module $\tilde{\mathbb{M}}:(\mathbb{R}^3,\leq) \to \textbf{Vec}_{\Bbbk}$, which is a submodule of $\mathbb{M}$, defined as $\tilde{\mathbb{M}}_{\boldsymbol{t}}:=F_{\mathbb{R}^3,\boldsymbol{t}}^{-}=V_{\mathbb{R}^3,\boldsymbol{t}}^{-}=\text{Im}_{\mathbb{R}^3,\boldsymbol{t}}^{+}\cap \text{Ker}_{\mathbb{R}^3,\boldsymbol{t}}^{-}$. 
    Let $X=\tilde{\mathbb{M}}_{\boldsymbol{t}}+\underset{B:\text{birth and layer}}{\sum}(\mathbb{M}_B)_{\boldsymbol{t}}$. 
    Based on the definition of $\tilde{\mathbb{M}}$, it is natural to conjecture that the submodule $\tilde{\mathbb{M}}$ is spanned by the block modules corresponding to with death blocks that are proper subsets of $\mathbb{R}^3$. 
    \vspace{0.3cm}
    \begin{proposition}
		$\mathbb{M}=\tilde{\mathbb{M}}+\underset{B:\mathcal{B}_1\cup\mathcal{B}_2\cup\mathcal{B}_3\cup\mathcal{B}_5}{\bigoplus}\mathbb{M}_B$.
    \end{proposition}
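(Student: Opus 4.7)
The plan is to fix a point $\boldsymbol{t}\in\mathbb{R}^{3}$ and establish the pointwise identity
$\mathbb{M}_{\boldsymbol{t}}=\tilde{\mathbb{M}}_{\boldsymbol{t}}+\sum_{B\in\mathcal{B}_{1}\cup\mathcal{B}_{2}\cup\mathcal{B}_{3}\cup\mathcal{B}_{5}}(\mathbb{M}_{B})_{\boldsymbol{t}}$
by the covering-sections technique of Crawley-Boevey, exactly as Cochoy--Oudot did in the 2-parameter setting. Concretely, I would build a single master collection of sections in $\mathbb{M}_{\boldsymbol{t}}$ that covers $\mathbb{M}_{\boldsymbol{t}}$, whose per-block section is $(F_{B,\boldsymbol{t}}^{-},F_{B,\boldsymbol{t}}^{+})$, and then pick complements inside each section so that (i) for $B\in\mathcal{B}_{1}\cup\mathcal{B}_{2}\cup\mathcal{B}_{3}\cup\mathcal{B}_{5}$ the complement coincides with $(\mathbb{M}_{B})_{\boldsymbol{t}}$ delivered by the preceding proposition, while (ii) for every death block $B\in\mathcal{B}_{4}\cup\{\mathbb{R}^{3}\}$ the complement can be chosen inside $\tilde{\mathbb{M}}_{\boldsymbol{t}}$.

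First I would take the six coordinate section families $\{(\text{Im}_{c_{i},\boldsymbol{t}}^{-},\text{Im}_{c_{i},\boldsymbol{t}}^{+})\}$ and $\{(\text{Ker}_{c^{i},\boldsymbol{t}}^{-},\text{Ker}_{c^{i},\boldsymbol{t}}^{+})\}$ for $i=1,2,3$. By Lemma~\ref{4} each family is disjoint in $\mathbb{M}_{\boldsymbol{t}}$, and by Lemma~\ref{5} each of them strongly covers $\mathbb{M}_{\boldsymbol{t}}$. Iterating the disjointness combination lemma (Lemma~\ref{9}) alongside its covering counterpart produces a section collection indexed by all blocks $B\ni\boldsymbol{t}$; unfolding it via the explicit formulas for $\text{Im}_{B,\boldsymbol{t}}^{\pm}$ and $\text{Ker}_{B,\boldsymbol{t}}^{\pm}$ in Section~3.1 identifies the $B$-entry with $(F_{B,\boldsymbol{t}}^{-},F_{B,\boldsymbol{t}}^{+})$. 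The resulting master collection is therefore both disjoint (which matches the direct-sum conclusion already established in the preceding propositions) and covering.

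Second, I would analyze the complements. For $B\in\mathcal{B}_{1}\cup\mathcal{B}_{2}\cup\mathcal{B}_{3}\cup\mathcal{B}_{5}$ the preceding proposition has already supplied a complement $(\mathbb{M}_{B})_{\boldsymbol{t}}$ of $F_{B,\boldsymbol{t}}^{-}$ inside $F_{B,\boldsymbol{t}}^{+}$. For a death block $B$, all three birth cuts $c_{i}$ are trivial, hence $\text{Im}_{B,\boldsymbol{t}}^{-}=0$ and $\text{Im}_{B,\boldsymbol{t}}^{+}=\text{Im}_{\mathbb{R}^{3},\boldsymbol{t}}^{+}$, so $F_{B,\boldsymbol{t}}^{+}=V_{B,\boldsymbol{t}}^{+}\subseteq\text{Im}_{\mathbb{R}^{3},\boldsymbol{t}}^{+}$. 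Combining Lemma~\ref{7} with the decomposition $V_{B,\boldsymbol{t}}^{+}=V_{B,\boldsymbol{t}}^{-}+K_{B,\boldsymbol{t}}^{+}$ used in the proof of the preceding proposition shows that every quotient $F_{B,\boldsymbol{t}}^{+}/F_{B,\boldsymbol{t}}^{-}$ sits inside $\text{Ker}_{\mathbb{R}^{3},\boldsymbol{t}}^{-}$ modulo $F_{B,\boldsymbol{t}}^{-}$, so one may choose its complement inside $\text{Im}_{\mathbb{R}^{3},\boldsymbol{t}}^{+}\cap\text{Ker}_{\mathbb{R}^{3},\boldsymbol{t}}^{-}=\tilde{\mathbb{M}}_{\boldsymbol{t}}$. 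Applying the covering lemma to the master collection then writes $\mathbb{M}_{\boldsymbol{t}}$ as the sum of all such complements, and grouping the death-block ones inside $\tilde{\mathbb{M}}_{\boldsymbol{t}}$ yields the desired equality.

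The main obstacle is the coordinated complement selection in the second step: one must ensure that, for every death block $B\in\mathcal{B}_{4}\cup\{\mathbb{R}^{3}\}$ simultaneously, the complement of $F_{B,\boldsymbol{t}}^{-}$ in $F_{B,\boldsymbol{t}}^{+}$ can be placed inside $\tilde{\mathbb{M}}_{\boldsymbol{t}}$ without disturbing the complement choices $(\mathbb{M}_{B})_{\boldsymbol{t}}$ already fixed for the other block types by the preceding proposition. This rests on using the 3-parameter strong exactness together with the Mittag-Leffler argument behind Lemma~\ref{16} to realize these complements as limits of coherent pointwise choices, and then verifying via Lemma~\ref{7} and the decomposition formulas that the image--kernel arithmetic forces the death-block part into $\text{Im}_{\mathbb{R}^{3},\boldsymbol{t}}^{+}\cap\text{Ker}_{\mathbb{R}^{3},\boldsymbol{t}}^{-}$.
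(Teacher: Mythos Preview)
Your plan has a real gap in the construction of the ``master collection.'' Iterating Lemma~\ref{9} (and its covering counterpart) over all six coordinate families---three image and three kernel---produces a collection of sections indexed by all $6$-tuples of cuts $(c_1,c_2,c_3,c^1,c^2,c^3)$, that is, by all \emph{cuboids} containing $\boldsymbol{t}$, not merely by blocks. You give no argument that the non-block cuboids have trivial sections $F_{C,\boldsymbol{t}}^{+}=F_{C,\boldsymbol{t}}^{-}$ (so that they can be discarded), nor that the iterated combination actually unfolds to the specific form $F_{B,\boldsymbol{t}}^{\pm}=\text{Im}_{B,\boldsymbol{t}}^{-}+\text{Ker}_{B,\boldsymbol{t}}^{\pm}\cap\text{Im}_{B,\boldsymbol{t}}^{+}$; already in two parameters that identification requires repeated use of strong exactness, and here the algebra is considerably more intricate. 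Without this, the covering lemma does not deliver a sum over blocks, and your second step (placing death-block complements inside $\tilde{\mathbb{M}}_{\boldsymbol{t}}$) never gets off the ground.

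The paper avoids building any global block-indexed covering family. Instead it fixes $\boldsymbol{t}$, sets $X=\tilde{\mathbb{M}}_{\boldsymbol{t}}+\sum_{B}(\mathbb{M}_B)_{\boldsymbol{t}}$, assumes $X\subsetneq\mathbb{M}_{\boldsymbol{t}}$, and applies Lemma~\ref{5} \emph{only to the three image families}, iteratively producing cuts $c_1,c_2,c_3$ with $\text{Im}_{c_1,\boldsymbol{t}}^{+}\cap\text{Im}_{c_2,\boldsymbol{t}}^{+}\cap\text{Im}_{c_3,\boldsymbol{t}}^{+}\not\subseteq X$ while the corresponding ``minus'' pieces lie in $X$. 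It then does a finite case split on which of the $c_i^{-}$ are empty. If all three are nonempty, $B=c_1^{+}\times c_2^{+}\times c_3^{+}$ is a genuine birth block and one checks $F_{B,\boldsymbol{t}}^{-}\subseteq X\not\supseteq F_{B,\boldsymbol{t}}^{+}$, contradicting $(\mathbb{M}_B)_{\boldsymbol{t}}\subseteq X$. If exactly one $c_i^{-}$ is nonempty, one invokes Lemma~\ref{5} once more, now for a single kernel family, to manufacture a layer block and reach the same contradiction via Lemma~\ref{7}. If all $c_i^{-}$ are empty, then $\text{Im}_{\mathbb{R}^{3},\boldsymbol{t}}^{+}=F_{\mathbb{R}^{3},\boldsymbol{t}}^{+}=\tilde{\mathbb{M}}_{\boldsymbol{t}}+(\mathbb{M}_{\mathbb{R}^{3}})_{\boldsymbol{t}}\subseteq X$, again a contradiction. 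No death-block complements are ever chosen in this argument; their contribution is already packaged into $\tilde{\mathbb{M}}$ by its very definition, and the actual death-block decomposition is deferred to the dual module $\tilde{\mathbb{M}}^{*}$ later on.
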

    \begin{proof}
		Given a fixed $\boldsymbol{t}\in \mathbb{R}^3$, let $X=\tilde{\mathbb{M}}_{\boldsymbol{t}}+\underset{B:\mathcal{B}_1\cup\mathcal{B}_2\cup\mathcal{B}_3\cup\mathcal{B}_5}{\bigoplus}(\mathbb{M}_B)_{\boldsymbol{t}}$. 
		Suppose for a contraction that $X\subsetneq \mathbb{M}_{\boldsymbol{t}}$. 
        Then apply Lemma\ref{5} with $Z=\mathbb{M}_{\boldsymbol{t}}$ to get a cut $c_1$ such that $t_1 \in {c_1}^+$ and $\text{Im}_{c_1,\boldsymbol{t}}^{-} \subseteq X \nsupseteq \text{Im}_{c_1,\boldsymbol{t}}^{+}$. 
		Again, use Lemma\ref{5} with $Z=\text{Im}_{c_1,\boldsymbol{t}}^{+}$ to get a cut $c_2$ such that $t_2 \in {c_2}^+$ and $\text{Im}_{c_1,\boldsymbol{t}}^{+}\cap \text{Im}_{c_2,\boldsymbol{t}}^{-} \subseteq X \nsupseteq \text{Im}_{c_1,\boldsymbol{t}}^{+}\cap \text{Im}_{c_2,\boldsymbol{t}}^{+}$. 
		Again, use Lemma\ref{5} with $Z=\text{Im}_{c_1,\boldsymbol{t}}^{+}\cap \text{Im}_{c_2,\boldsymbol{t}}^{+}$ to find a cut $c_3$ so that $t_3 \in {c_3}^+$ and $\text{Im}_{c_1,\boldsymbol{t}}^{+}\cap \text{Im}_{c_2,\boldsymbol{t}}^{+} \cap \text{Im}_{c_3,\boldsymbol{t}}^{-}\subseteq X \nsupseteq \text{Im}_{c_1,\boldsymbol{t}}^{+}\cap \text{Im}_{c_2,\boldsymbol{t}}^{+}\cap \text{Im}_{c_3,\boldsymbol{t}}^{+}$. 
		
		If ${c_1}^-={c_2}^-={c_3}^-=\emptyset$, then 
		$$\text{Im}_{c_1,\boldsymbol{t}}^{+}\cap \text{Im}_{c_2,\boldsymbol{t}}^{+}\cap \text{Im}_{c_3,\boldsymbol{t}}^{+} = \text{Im}_{\mathbb{R}^3,\boldsymbol{t}}^{+} = F_{\mathbb{R}^3,\boldsymbol{t}}^{+} = F_{\mathbb{R}^3,\boldsymbol{t}}^{-} + (\mathbb{M}_{\mathbb{R}^3})_{\boldsymbol{t}} =\tilde{\mathbb{M}}_{\boldsymbol{t}} + (\mathbb{M}_{\mathbb{R}^3})_{\boldsymbol{t}} \subseteq X .$$ 
		However, our selection of $c_1,c_2,c_3$ ensures that $\text{Im}_{c_1,\boldsymbol{t}}^{+}\cap \text{Im}_{c_2,\boldsymbol{t}}^{+}\cap \text{Im}_{c_3,\boldsymbol{t}}^{+} \nsubseteq  X$. This is a contradiction. 
		Thus ${c_1}^{-} \neq \emptyset$ or ${c_2}^{-} \neq \emptyset$ or ${c_3}^{-} \neq \emptyset$.
        
		We distinguish these cases below: 
        These cases are divided as follows:
		
		\textbf{Case} ${c_1}^{-} \neq \emptyset, {c_2}^{-} \neq \emptyset, {c_3}^{-} \neq \emptyset$.
		Let the block $B={c_1}^+ \times {c_2}^+ \times {c_3}^+$. 
		We have $\text{Im}_{B,\boldsymbol{t}}^{+}=\text{Im}_{c_1,\boldsymbol{t}}^{+} \cap \text{Im}_{c_2,\boldsymbol{t}}^{+} \cap \text{Im}_{c_3,\boldsymbol{t}}^{+} = F_{B,\boldsymbol{t}}^{+} \nsubseteq X$. But 
		\begin{equation*}
		  \begin{aligned}
				F_{B,\boldsymbol{t}}^{-} &= \text{Im}_{B,\boldsymbol{t}}^{-}+\text{Ker}_{B,\boldsymbol{t}}^{-}\cap \text{Im}_{B,\boldsymbol{t}}^{+}\\
				&\subseteq \text{Im}_{B,\boldsymbol{t}}^{-}+ (\text{Im}_{c_2,\boldsymbol{t}}^{-}\cap \text{Im}_{c_3,\boldsymbol{t}}^{-}+\text{Im}_{c_1,\boldsymbol{t}}^{-}\cap \text{Im}_{c_3,\boldsymbol{t}}^{-}+\text{Im}_{c_1,\boldsymbol{t}}^{-}\cap \text{Im}_{c_2,\boldsymbol{t}}^{-})\cap \text{Im}_{B,\boldsymbol{t}}^{+}\\
				&\subseteq \text{Im}_{B,\boldsymbol{t}}^{-} \subseteq \text{Im}_{c_1,\boldsymbol{t}}^{-}+\text{Im}_{c_1,\boldsymbol{t}}^{+}\cap \text{Im}_{c_2,\boldsymbol{t}}^{-}+\text{Im}_{c_1,\boldsymbol{t}}^{+}\cap \text{Im}_{c_2,\boldsymbol{t}}^{+}\cap \text{Im}_{c_3,\boldsymbol{t}}^{-} \subseteq X
		  \end{aligned}
		\end{equation*}
		by Lemma\ref{2}. 
		Note that $F_{B,\boldsymbol{t}}^{+}=F_{B,\boldsymbol{t}}^{-}\oplus (\mathbb{M}_B)_{\boldsymbol{t}}$. 
		Then we get a contradiction, $(\mathbb{M}_B)_{\boldsymbol{t}} \nsubseteq X$. 
		
		\textbf{Case} ${c_1}^{-} \neq \emptyset, {c_2}^{-} \neq \emptyset, {c_3}^{-} = \emptyset$. 
		Let the block $B={c_1}^+ \times {c_2}^+ \times {c_3}^+$. 
		We have $\text{Im}_{B,\boldsymbol{t}}^{+}=\text{Im}_{c_1,\boldsymbol{t}}^{+} \cap \text{Im}_{c_2,\boldsymbol{t}}^{+} \cap \text{Im}_{c_3,\boldsymbol{t}}^{+} = F_{B,\boldsymbol{t}}^{+} \nsubseteq X$, but $F_{B,\boldsymbol{t}}^{-} = \text{Im}_{B,\boldsymbol{t}}^{-}+\text{Ker}_{B,\boldsymbol{t}}^{-}\cap \text{Im}_{B,\boldsymbol{t}}^{+} \subseteq \text{Im}_{B,\boldsymbol{t}}^{-}+ (\text{Im}_{c_2,\boldsymbol{t}}^{-}\cap \text{Im}_{c_3,\boldsymbol{t}}^{+}+\text{Im}_{c_1,\boldsymbol{t}}^{-}\cap \text{Im}_{c_3,\boldsymbol{t}}^{+}+\text{Im}_{c_1,\boldsymbol{t}}^{-}\cap \text{Im}_{c_2,\boldsymbol{t}}^{-})\cap \text{Im}_{B,\boldsymbol{t}}^{+} \subseteq \text{Im}_{B,\boldsymbol{t}}^{-} \subseteq X$ by Lemma\ref{2}. 
		Note that $F_{B,\boldsymbol{t}}^{+}=F_{B,\boldsymbol{t}}^{-}\oplus (\mathbb{M}_B)_{\boldsymbol{t}}$. 
        Thus, $(\mathbb{M}_B)_{\boldsymbol{t}} \nsubseteq X$. This is a contradiction.
		
		Similarly, we can prove these cases that ${c_1}^{-} \neq \emptyset, {c_2}^{-} = \emptyset, {c_3}^{-} \neq \emptyset$ and ${c_1}^{-} = \emptyset, {c_2}^{-} \neq \emptyset, {c_3}^{-} \neq \emptyset$. 
		
		\textbf{Case} ${c_1}^{-} \neq \emptyset, {c_2}^{-} = \emptyset, {c_3}^{-} = \emptyset$. 
		By Lemma\ref{5}, applied with $Z=\text{Im}_{c_1,\boldsymbol{t}}^{+}\cap \text{Im}_{c_2,\boldsymbol{t}}^{+}\cap \text{Im}_{c_3,\boldsymbol{t}}^{+}$, there is a cut $c^1$ such that $\boldsymbol{t}\in {c^1}^-$ and 
		$$\text{Im}_{c_1,\boldsymbol{t}}^{+}\cap \text{Im}_{c_2,\boldsymbol{t}}^{+}\cap \text{Im}_{c_3,\boldsymbol{t}}^{+}\cap \text{Ker}_{c^1,\boldsymbol{t}}^{-} \subseteq X \nsupseteq \text{Im}_{c_1,\boldsymbol{t}}^{+}\cap \text{Im}_{c_2,\boldsymbol{t}}^{+}\cap \text{Im}_{c_3,\boldsymbol{t}}^{+}\cap \text{Ker}_{c^1,\boldsymbol{t}}^{+}.$$
		Let the block $B=({c_1}^+ \cap {c^1}^-) \times {c_2} \times {c_3}$. 
		Using Lemma\ref{2}, we have $\text{Ker}_{c^1,\boldsymbol{t}}^{+}\subseteq \text{Im}_{c_2,\boldsymbol{t}}^{+} \cap \text{Im}_{c_3,\boldsymbol{t}}^{+}$, $\text{Ker}_{c^2,\boldsymbol{t}}^{-}\subseteq \text{Im}_{c_1,\boldsymbol{t}}^{-} \cap \text{Im}_{c_3,\boldsymbol{t}}^{+}$ and $\text{Ker}_{c^3,\boldsymbol{t}}^{-}\subseteq \text{Im}_{c_1,\boldsymbol{t}}^{-} \cap \text{Im}_{c_2,\boldsymbol{t}}^{+}$. Then 
		$$\text{Im}_{B,\boldsymbol{t}}^{-}=\text{Im}_{c_1,\boldsymbol{t}}^{-}\cap \text{Im}_{c_2,\boldsymbol{t}}^{+} \cap \text{Im}_{c_3,\boldsymbol{t}}^{+} \subseteq X$$
		$$\text{Ker}_{B,\boldsymbol{t}}^{+}\cap \text{Im}_{B,\boldsymbol{t}}^{+}\supseteq \text{Im}_{c_1,\boldsymbol{t}}^{+}\cap \text{Im}_{c_2,\boldsymbol{t}}^{+}\cap \text{Im}_{c_3,\boldsymbol{t}}^{+}\cap \text{Ker}_{c^1,\boldsymbol{t}}^{+} \nsubseteq X$$
		\begin{equation*}
		  \begin{aligned}
				&\text{Im}_{B,\boldsymbol{t}}^{+}\cap \text{Ker}_{B,\boldsymbol{t}}^{-}=\text{Im}_{c_1,\boldsymbol{t}}^{+} \cap \text{Im}_{c_2,\boldsymbol{t}}^{+} \cap \text{Im}_{c_3,\boldsymbol{t}}^{+} \cap (\text{Ker}_{c^1,\boldsymbol{t}}^{-}+\text{Ker}_{c^2,\boldsymbol{t}}^{-}+\text{Ker}_{c^3,\boldsymbol{t}}^{-})\\
				&\subseteq \text{Im}_{c_1,\boldsymbol{t}}^{+} \cap \text{Im}_{c_2,\boldsymbol{t}}^{+} \cap \text{Im}_{c_3,\boldsymbol{t}}^{+} \cap (\text{Ker}_{c^1,\boldsymbol{t}}^{-}+\text{Im}_{c_1,\boldsymbol{t}}^{-} \cap \text{Im}_{c_3,\boldsymbol{t}}^{+}+\text{Im}_{c_1,\boldsymbol{t}}^{-} \cap \text{Im}_{c_2,\boldsymbol{t}}^{+})\\
				&=\text{Im}_{c_1,\boldsymbol{t}}^{+} \cap \text{Im}_{c_2,\boldsymbol{t}}^{+} \cap \text{Im}_{c_3,\boldsymbol{t}}^{+} \cap \text{Ker}_{c^1,\boldsymbol{t}}^{-} + \text{Im}_{c_1,\boldsymbol{t}}^{-} \cap \text{Im}_{c_2,\boldsymbol{t}}^{+} \cap \text{Im}_{c_3,\boldsymbol{t}}^{+} \subseteq X
		  \end{aligned}
		\end{equation*}
		Thus, $F_{B,\boldsymbol{t}}^{-} \subseteq X \nsupseteq F_{B,\boldsymbol{t}}^{+}$. Hence, $(\mathbb{M}_B)_{\boldsymbol{t}} \nsubseteq X$. This is a contradiction.
		
		Similarly, we can prove these cases that ${c_1}^{-} = \emptyset, {c_2}^{-} \neq \emptyset, {c_3}^{-}=\emptyset$ and ${c_1}^{-} = \emptyset, {c_2}^{-} = \emptyset, {c_3}^{-} \neq \emptyset$. 
    \end{proof}
    \vspace{0.3cm}
    \begin{lemma}
		$(\tilde{\mathbb{M}}+\underset{B:\mathcal{B}_1 \cup \mathcal{B}_2 \cup \mathcal{B}_3}{\bigoplus} \mathbb{M}_B)+\underset{B:\mathcal{B}_5}{\bigoplus} \mathbb{M}_B = (\tilde{\mathbb{M}}+\underset{B:\mathcal{B}_1\cup \mathcal{B}_2 \cup \mathcal{B}_3}{\bigoplus} \mathbb{M}_B)\oplus \underset{B:\mathcal{B}_5}{\bigoplus} \mathbb{M}_B$
    \end{lemma}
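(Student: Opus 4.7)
The plan is to follow the strategy from the birth-block case of Proposition~\ref{6}: push the putative relation forward to a point $\boldsymbol{u}$ that lies deep inside every birth block appearing in the relation but outside every other block, and derive a contradiction. The new ingredient, beyond what was needed in Proposition~\ref{6}, is that one must also kill the contribution $\gamma \in \tilde{\mathbb{M}}_{\boldsymbol{t}}$ under $\rho_{\boldsymbol{t}}^{\boldsymbol{u}}$.

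Concretely, I fix $\boldsymbol{t}\in\mathbb{R}^3$ and suppose for contradiction that some $\alpha\neq 0$ lies in both $\tilde{\mathbb{M}}_{\boldsymbol{t}}+\sum_{B\in\mathcal{B}_1\cup\mathcal{B}_2\cup\mathcal{B}_3}(\mathbb{M}_B)_{\boldsymbol{t}}$ and $\sum_{B\in\mathcal{B}_5}(\mathbb{M}_B)_{\boldsymbol{t}}$. I write $\alpha=\sum_{i=1}^{n}\alpha_i$ with $\alpha_i\in(\mathbb{M}_{B_i})_{\boldsymbol{t}}$ for distinct $B_i\in\mathcal{B}_5$ (not all zero, by the direct-sum property of Proposition~\ref{6}), and $\alpha=\gamma+\sum_{j=1}^{m}\beta_j$ with $\gamma\in\tilde{\mathbb{M}}_{\boldsymbol{t}}$ and $\beta_j\in(\mathbb{M}_{B'_j})_{\boldsymbol{t}}$ for $B'_j\in\mathcal{B}_1\cup\mathcal{B}_2\cup\mathcal{B}_3$.

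By definition $\gamma\in\text{Ker}_{\mathbb{R}^3,\boldsymbol{t}}^{-}=\text{Ker}_{c^1,\boldsymbol{t}}^{-}+\text{Ker}_{c^2,\boldsymbol{t}}^{-}+\text{Ker}_{c^3,\boldsymbol{t}}^{-}$, where the $c^i$ are the trivial upper cuts attached to $\mathbb{R}^3$. Applying Lemma~\ref{1} to each summand produces real numbers $x_i\geq t_i$ and a decomposition $\gamma=\gamma_1+\gamma_2+\gamma_3$ such that $\gamma_1\in\text{Ker }\rho_{\boldsymbol{t}}^{(x_1,t_2,t_3)}$, $\gamma_2\in\text{Ker }\rho_{\boldsymbol{t}}^{(t_1,x_2,t_3)}$, and $\gamma_3\in\text{Ker }\rho_{\boldsymbol{t}}^{(t_1,t_2,x_3)}$. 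Each block $B'_j\in\mathcal{B}_1\cup\mathcal{B}_2\cup\mathcal{B}_3$ has some non-trivial ${c^\ell}^+$, hence a finite upper bound in the corresponding coordinate. I therefore choose $\boldsymbol{u}=(u_1,u_2,u_3)\geq\boldsymbol{t}$ with each $u_i\geq x_i$ and each coordinate taken large enough that $\boldsymbol{u}\notin B'_j$ for all $j$; birth-block upward closure then guarantees $\boldsymbol{u}\in B_i$ for every $i$.

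Now I compute $\rho_{\boldsymbol{t}}^{\boldsymbol{u}}(\alpha)$ two ways. From the second representation: $\rho_{\boldsymbol{t}}^{\boldsymbol{u}}(\beta_j)\in(\mathbb{M}_{B'_j})_{\boldsymbol{u}}=0$, and since $\boldsymbol{u}$ dominates each of $(x_1,t_2,t_3)$, $(t_1,x_2,t_3)$, $(t_1,t_2,x_3)$ componentwise, $\rho_{\boldsymbol{t}}^{\boldsymbol{u}}$ factors through the three morphisms above, so $\rho_{\boldsymbol{t}}^{\boldsymbol{u}}(\gamma_i)=0$ for $i=1,2,3$ and thus $\rho_{\boldsymbol{t}}^{\boldsymbol{u}}(\gamma)=0$; altogether $\rho_{\boldsymbol{t}}^{\boldsymbol{u}}(\alpha)=0$. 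From the first representation: since $\pi_{\boldsymbol{v}}$ is an isomorphism on $M_{B_i}^{0}$ for each $\boldsymbol{v}\in B_i$, the map $\rho_{\boldsymbol{t}}^{\boldsymbol{u}}\colon(\mathbb{M}_{B_i})_{\boldsymbol{t}}\to(\mathbb{M}_{B_i})_{\boldsymbol{u}}$ is itself an isomorphism (Corollary~\ref{8}), and because the summands at $\boldsymbol{u}$ are in direct sum by Proposition~\ref{6}, $\rho_{\boldsymbol{t}}^{\boldsymbol{u}}(\alpha)=\sum_{i}\rho_{\boldsymbol{t}}^{\boldsymbol{u}}(\alpha_i)\neq 0$. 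This contradiction forces $\alpha=0$. The main obstacle is the middle step: producing genuinely finite kill-points $x_i$ for the tilde-part; once Lemma~\ref{1} supplies them, the rest of the push-forward argument is a direct transcription of the $\mathcal{B}_5$ case already treated in Proposition~\ref{6}.
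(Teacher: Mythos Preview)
Your proof is correct and follows essentially the same approach as the paper: assume a nontrivial element in the intersection, use the definition of $\tilde{\mathbb{M}}_{\boldsymbol{t}}\subseteq \text{Ker}_{\mathbb{R}^3,\boldsymbol{t}}^{-}$ together with Lemma~\ref{1} to find finite coordinates beyond which the tilde-part dies, then push forward to a sufficiently large $\boldsymbol{u}$ that kills both the tilde-part and the layer-block contributions while keeping the birth-block sum nonzero via injectivity. The only cosmetic difference is that the paper first picks a point $\boldsymbol{u}$ outside the layer blocks and then enlarges it to $\boldsymbol{v}$ to also kill the tilde-part, whereas you do both in a single step.
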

    \begin{proof}
        Assume the opposite, and let $\boldsymbol{t}\in \mathbb{R}^3$ so that $(\tilde{\mathbb{M}}+\underset{B:\mathcal{B}_1 \cup \mathcal{B}_2 \cup \mathcal{B}_3}{\bigoplus} \mathbb{M}_B)_{\boldsymbol{t}} \cap (\underset{B:\mathcal{B}_5}{\bigoplus} \mathbb{M}_B)_{\boldsymbol{t}} \neq \emptyset$. 
		Then there exist $\alpha \in \tilde{\mathbb{M}}_{\boldsymbol{t}}$, $\alpha_1 \in (\mathbb{M}_{B_1})_{\boldsymbol{t}}, \cdots, \alpha_r \in (\mathbb{M}_{B_r})_{\boldsymbol{t}}$ and $\alpha_{r+1} \in (\mathbb{M}_{B_{r+1}})_{\boldsymbol{t}}, \cdots, \alpha_n \in (\mathbb{M}_{B_n})_{\boldsymbol{t}}$, such that $B_1,\cdots,B_r$ are in $\mathcal{B}_1 \cup \mathcal{B}_2 \cup \mathcal{B}_3$, $B_{r+1},\cdots,B_{n}$ are in $\mathcal{B}_5$, and we have 
		$$\alpha+\sum_{i=1}^{r}\alpha_i=\sum_{j=r+1}^{n}\alpha_j \neq 0 .$$ 
		
		Because of the shape of these blocks in $\mathcal{B}_1 \cup \mathcal{B}_2 \cup \mathcal{B}_3$, we may find out some $\boldsymbol{u}\geq \boldsymbol{t}$ such that $\boldsymbol{u}\notin \bigcup_{i=1}^{r}B_i$. 
		What's more, since $\alpha \in \tilde{\mathbb{M}}_{\boldsymbol{t}} = \text{Im}_{\mathbb{R}^3,\boldsymbol{t}}^{+}\cap \text{Ker}_{\mathbb{R}^3,\boldsymbol{t}}^{-} \subseteq \text{Ker}_{\mathbb{R}^3,\boldsymbol{t}}^{-} = \text{Ker}_{c^1,\boldsymbol{t}}^{-}+ \text{Ker}_{c^2,\boldsymbol{t}}^{-}+ \text{Ker}_{c^3,\boldsymbol{t}}^{-}$, we have $\alpha=\alpha_1^{\prime}+\alpha_2^{\prime}+\alpha_3^{\prime}$ for some $\alpha_1^{\prime} \in \text{Ker}_{c_1,\boldsymbol{t}}^{-}$, $\alpha_2^{\prime} \in \text{Ker}_{c_2,\boldsymbol{t}}^{-}$ and $\alpha_3^{\prime} \in \text{Ker}_{c_3,\boldsymbol{t}}^{-}$. 
		By Lemma\ref{1}, there are finite coordinates $x\geq t_1$, $y\geq t_2$ and $z\geq t_3$ such that $\alpha_1^{\prime} \in \text{Ker } \rho_{\boldsymbol{t}}^{(x,t_2,t_3)}$, $\alpha_2^{\prime} \in \text{Ker } \rho_{\boldsymbol{t}}^{(t_1,y,t_3)}$ and $\alpha_3^{\prime} \in \text{Ker } \rho_{\boldsymbol{t}}^{(t_1,t_2,z)}$. 
		Let $\boldsymbol{v}$ be a point with coordinates $(\text{max}\{u_1,x\},\text{max}\{u_2,y\},\text{max}\{u_3,z\})$. 
		Then we obtain
		$$\rho_{\boldsymbol{t}}^{\boldsymbol{v}}(\alpha+\sum_{i=1}^{r}\alpha_i) = 0.$$
		However, because $\rho_{\boldsymbol{t}}^{\boldsymbol{v}}$ restricted to $\bigoplus_{r+1}^{n}(\mathbb{M}_{B_i})_r$ is injective, we have $\rho_{\boldsymbol{t}}^{\boldsymbol{v}}(\sum_{i=r+1}^{n})\alpha_i \neq 0$. 
		This is a contradiction. 
        Thus $(\tilde{\mathbb{M}}+\underset{B:\mathcal{B}_1 \cup \mathcal{B}_2 \cup \mathcal{B}_3}{\bigoplus} \mathbb{M}_B)\cap \underset{B:\mathcal{B}_5}{\bigoplus} \mathbb{M}_B=0$.
    \end{proof}
    \vspace{0.3cm}
    \begin{lemma}
		$\tilde{\mathbb{M}}+\underset{B:\mathcal{B}_1 \cup \mathcal{B}_2 \cup \mathcal{B}_3}{\bigoplus} \mathbb{M}_B = \tilde{\mathbb{M}}\oplus \underset{B:\mathcal{B}_1 \cup \mathcal{B}_2 \cup \mathcal{B}_3}{\bigoplus} \mathbb{M}_B$
    \end{lemma}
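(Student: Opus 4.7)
The plan is to argue by contradiction. Suppose there exist $\boldsymbol{t}\in \mathbb{R}^3$ and a nonzero $\alpha \in \tilde{\mathbb{M}}_{\boldsymbol{t}} \cap \bigoplus_{B \in \mathcal{B}_1 \cup \mathcal{B}_2 \cup \mathcal{B}_3}(\mathbb{M}_B)_{\boldsymbol{t}}$; by the direct-sum structure already established, write $\alpha = \sum_{j=1}^{n}\alpha_j$ with $0 \neq \alpha_j \in (\mathbb{M}_{B_j})_{\boldsymbol{t}}$ and the $B_j$ distinct layer blocks. The central ingredient is to prove the Key Inclusion $\tilde{\mathbb{M}}_{\boldsymbol{t}}\subseteq F_{B,\boldsymbol{t}}^{-}$ for every layer block $B$, and then to peel off one summand at a time using disjointness of the $F$-sections to force some $\alpha_j$ to vanish.

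To establish the Key Inclusion for $B\in \mathcal{B}_1$ (the cases $\mathcal{B}_2,\mathcal{B}_3$ follow by symmetry), it suffices to show the stronger $\text{Im}_{\mathbb{R}^3,\boldsymbol{t}}^{+}\subseteq \text{Im}_{B,\boldsymbol{t}}^{-}$, since $\tilde{\mathbb{M}}_{\boldsymbol{t}}\subseteq \text{Im}_{\mathbb{R}^3,\boldsymbol{t}}^{+}$ and $\text{Im}_{B,\boldsymbol{t}}^{-}\subseteq F_{B,\boldsymbol{t}}^{-}$ tautologically. The trivial $y$- and $z$-cuts of $B$ collapse the general formula for $\text{Im}_{B,\boldsymbol{t}}^{-}$ to $\text{Im}_{c_{1,B},\boldsymbol{t}}^{-}\cap \text{Im}_{c_2(\mathbb{R}^3),\boldsymbol{t}}^{+}\cap \text{Im}_{c_3(\mathbb{R}^3),\boldsymbol{t}}^{+}$, whose last two factors coincide with the corresponding factors of $\text{Im}_{\mathbb{R}^3,\boldsymbol{t}}^{+}$; the task thus reduces to $\text{Im}_{c_1(\mathbb{R}^3),\boldsymbol{t}}^{+}\subseteq \text{Im}_{c_{1,B},\boldsymbol{t}}^{-}$. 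By Lemma \ref{1}, $\text{Im}_{c_1(\mathbb{R}^3),\boldsymbol{t}}^{+}$ is realized by $\text{Im }\rho_{(x_0,t_2,t_3)}^{\boldsymbol{t}}$ for any sufficiently small $x_0\in \mathbb{R}$; choosing $x_0$ below the cut $c_{1,B}$ places it in ${c_{1,B}}^{-}$, exhibiting $\text{Im }\rho_{(x_0,t_2,t_3)}^{\boldsymbol{t}}$ as one of the summands making up $\text{Im}_{c_{1,B},\boldsymbol{t}}^{-}$, so the inclusion follows.

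With the Key Inclusion in hand, $\alpha \in F_{B_j,\boldsymbol{t}}^{-}$ for every $j$. I would then invoke disjointness of the collection $\{(F_{B_j,\boldsymbol{t}}^{-},F_{B_j,\boldsymbol{t}}^{+})\}_{j=1}^{n}$, which is built by combining Lemma \ref{4} (disjointness of image and kernel sections along a single axis) with Lemma \ref{9} (preservation of disjointness under intersection with a fixed subspace). Disjointness supplies a total order on the $B_j$; letting $B_1$ be the maximum, one has $F_{B_k,\boldsymbol{t}}^{+}\subseteq F_{B_1,\boldsymbol{t}}^{-}$ for all $k\geq 2$, so each $\alpha_k$ with $k\geq 2$ lies in $F_{B_1,\boldsymbol{t}}^{-}$, and hence $\sum_{k\geq 2}\alpha_k\in F_{B_1,\boldsymbol{t}}^{-}$. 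Projecting $\alpha = \alpha_1 + \sum_{k\geq 2}\alpha_k$ along the splitting $F_{B_1,\boldsymbol{t}}^{+}=F_{B_1,\boldsymbol{t}}^{-}\oplus(\mathbb{M}_{B_1})_{\boldsymbol{t}}$ sends both $\alpha$ (by the Key Inclusion) and $\sum_{k\geq 2}\alpha_k$ to $0$, leaving $\alpha_1 = 0$, contradicting $\alpha_1 \neq 0$. An induction on $n$ completes the proof.

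The main obstacle will be establishing the global disjointness of $\{(F_{B_j,\boldsymbol{t}}^{-},F_{B_j,\boldsymbol{t}}^{+})\}_{j}$ when the $B_j$ range across the three layer classes whose bounded axes lie in different coordinate directions: within a single $\mathcal{B}_i$ the disjointness flows cleanly from Lemma \ref{4} intersected with the fixed image/kernel subspaces via Lemma \ref{9}, but for cross-class pairs such as $B\in\mathcal{B}_1$ versus $B'\in\mathcal{B}_2$ one must argue case by case which of $F_{B,\boldsymbol{t}}^{+}\subseteq F_{B',\boldsymbol{t}}^{-}$ or $F_{B',\boldsymbol{t}}^{+}\subseteq F_{B,\boldsymbol{t}}^{-}$ actually holds. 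If this bookkeeping proves too delicate, an alternative is to mimic the propagation argument of the preceding lemma: decompose $\alpha\in \text{Ker}_{\mathbb{R}^3,\boldsymbol{t}}^{-}$ as $\gamma_1+\gamma_2+\gamma_3$ via Lemma \ref{1} and choose $\boldsymbol{v}\geq \boldsymbol{t}$ strategically so as to kill a chosen $\alpha_{j_0}$ while keeping the remaining summands and the relevant portion of $\alpha$ alive, thereby isolating one layer class at a time.
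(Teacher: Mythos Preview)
Your Key Inclusion $\tilde{\mathbb{M}}_{\boldsymbol{t}}\subseteq F_{B,\boldsymbol{t}}^{-}$ is correct and is essentially the same observation the paper uses (stated there as $F_{\mathbb{R}^3,\boldsymbol{u}}^{+}\subseteq F_{B_i,\boldsymbol{u}}^{-}$). The genuine gap is the cross-class disjointness you flag: it is not merely delicate bookkeeping, it is false in general. Take $\mathbb{M}=\Bbbk_B\oplus\Bbbk_{B'}$ with $B\in\mathcal{B}_1$, $B'\in\mathcal{B}_2$, and $\boldsymbol{t}\in B\cap B'$; writing $\mathbb{M}_{\boldsymbol{t}}=\Bbbk e_1\oplus\Bbbk e_2$ one computes $F_{B,\boldsymbol{t}}^{-}=\Bbbk e_2$, $F_{B,\boldsymbol{t}}^{+}=\Bbbk^2$, and symmetrically $F_{B',\boldsymbol{t}}^{-}=\Bbbk e_1$, $F_{B',\boldsymbol{t}}^{+}=\Bbbk^2$, so neither $F_{B,\boldsymbol{t}}^{+}\subseteq F_{B',\boldsymbol{t}}^{-}$ nor $F_{B',\boldsymbol{t}}^{+}\subseteq F_{B,\boldsymbol{t}}^{-}$ holds. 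Hence your peeling argument cannot be run directly at $\boldsymbol{t}$ across the three classes.

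The paper follows exactly your fallback: propagate first, then use single-class disjointness. One chooses $\boldsymbol{u}\geq\boldsymbol{t}$ large in the second and third coordinates (keeping $u_1=t_1$), so that $\boldsymbol{u}$ exits every block in $\mathcal{B}_2\cup\mathcal{B}_3$ while remaining in every $\mathcal{B}_1$ block; since $\tilde{\mathbb{M}}$ is a submodule, $\beta:=\rho_{\boldsymbol{t}}^{\boldsymbol{u}}(\alpha)\in\tilde{\mathbb{M}}_{\boldsymbol{u}}$ and $\beta=\sum_{i=1}^{k}\beta_i$ with $\beta_i\in(\mathbb{M}_{B_i})_{\boldsymbol{u}}$ nonzero and all $B_i\in\mathcal{B}_1$. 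Now the sections $(F_{B_i,\boldsymbol{u}}^{-},F_{B_i,\boldsymbol{u}}^{+})$ are disjoint (this is the $\mathcal{B}_1$ case of Proposition~\ref{6}), and your Key Inclusion gives $\tilde{\mathbb{M}}_{\boldsymbol{u}}\subseteq F_{\mathbb{R}^3,\boldsymbol{u}}^{+}\subseteq F_{B_i,\boldsymbol{u}}^{-}$, so adjoining $(0,F_{\mathbb{R}^3,\boldsymbol{u}}^{+})$ keeps the collection disjoint; Lemma~\ref{10} then forces $F_{\mathbb{R}^3,\boldsymbol{u}}^{+}\cap\bigoplus_{i}(\mathbb{M}_{B_i})_{\boldsymbol{u}}=0$, contradicting $\beta\neq 0$. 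In short, the propagation step is not optional: it is what reduces the problem to the single-axis situation where the $F$-sections really are disjoint.
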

    \begin{proof}
		Assume the opposite, and let $\boldsymbol{t}\in \mathbb{R}^3$ so that $(\tilde{\mathbb{M}})_{\boldsymbol{t}}\cap (\underset{B:\mathcal{B}_1 \cup \mathcal{B}_2 \cup \mathcal{B}_3}{\bigoplus} \mathbb{M}_B)_{\boldsymbol{t}} \neq \emptyset$. 
        Then there exist $\alpha \in \tilde{\mathbb{M}}_{\boldsymbol{t}}$, $\alpha_i \in (\mathbb{M}_{B_i})_{\boldsymbol{t}}$ with $i=1,2,\cdots,n$ such that $B_1,B_2,\cdots,B_n$ are in $\mathcal{B}_1 \cup \mathcal{B}_2 \cup \mathcal{B}_3$ and 
		$$\alpha = \sum_{i}^{n} \alpha_i \neq 0.$$
		Assume that $B_1,\cdots,B_k$ are in $\mathcal{B}_1$, $B_{k+1},\cdots,B_{r}$ are in $\mathcal{B}_2$ and $B_{r+1},\cdots,B_n$ are in $\mathcal{B}_3$. And assume that none of the $\alpha_i$'s are zero. 
		Because of the shape of these blocks, we may find a point $\boldsymbol{u}=(u_1,u_2,u_3)=(x,y,t_3)\in \mathbb{R}^3$ such that $\boldsymbol{u} \notin \bigcup_{i=k+1}^{n} B_i$, then $\rho_{\boldsymbol{t}}^{\boldsymbol{u}}(\sum_{i=k+1}^{n} \alpha_i) = 0$. 
		Since the restriction of $\rho_{\boldsymbol{t}}^{\boldsymbol{u}}$ to $\bigoplus_{i=1}^{k}(\mathbb{M}_{B_i})_{\boldsymbol{t}}$ is injective, $\rho_{t}^{u}(\sum_{i=1}^{k} \alpha_i)\neq 0$. 
		
		Let $\beta=\rho_{\boldsymbol{t}}^{\boldsymbol{u}}(\alpha)\in \tilde{\mathbb{M}}_{\boldsymbol{u}}$ and $\beta_i=\rho_{\boldsymbol{t}}^{\boldsymbol{u}}(\alpha_i)\in (\mathbb{M}_{B_i})_{\boldsymbol{u}}$ for $i=1,\cdots,k$
		$$\beta=\sum_{i=1}^{k} \beta_i \neq 0 .$$
		Now, we have $\tilde{\mathbb{M}}_{\boldsymbol{u}} \subseteq \text{Im}_{\mathbb{R}^3,\boldsymbol{u}}^{+} = F_{\mathbb{R}^3,\boldsymbol{u}}^{+}$. 
		From the proof of Lemma\ref{6}, the collection of sections $\{(F_{B_1,\boldsymbol{u}}^{-},F_{B_1,\boldsymbol{u}}^{+}),\cdots,(F_{B_k,\boldsymbol{u}}^{-},F_{B_k,\boldsymbol{u}}^{+})\}$ is disjoint. 
		Note that $F_{\mathbb{R}^3,\boldsymbol{u}}^{+} \subset F_{B_i,\boldsymbol{u}}^{-}$ for every $i$, then the collection of sections $\{(0,F_{\mathbb{R}^3,\boldsymbol{u}}^{+}), (F_{B_1,\boldsymbol{u}}^{-},F_{B_1,\boldsymbol{u}}^{+}),\cdots, (F_{B_k,\boldsymbol{u}}^{-},F_{B_k,\boldsymbol{u}}^{+})\}$ is disjoint.
        Then according to Lemma\ref{10}, $F_{\mathbb{R}^3,\boldsymbol{u}}^{+}$ is in direct sum with $\bigoplus_{i=1}^{k}(\mathbb{M}_{B_{i}})_{\boldsymbol{u}}$. 
        Because $\tilde{\mathbb{M}}_u$ is a subspace of $F_{\mathbb{R}^3,\boldsymbol{u}}^{+}$ and $F_{\mathbb{R}^3,\boldsymbol{u}}^{+}$ is in direct sum with $\bigoplus_{i=1}^{k}(\mathbb{M}_{B_{i}})_{\boldsymbol{u}}$, the result contradicts $\beta=\sum_{i=1}^{r} \beta_i \neq 0$. 
    \end{proof}
    \vspace{0.3cm}
    \begin{corollary}
		$\mathbb{M}=\tilde{\mathbb{M}}\oplus \underset{B:\mathcal{B}_1\cup \mathcal{B}_2 \cup \mathcal{B}_3 \cup \mathcal{B}_5}{\bigoplus}\mathbb{M}_B$
    \end{corollary}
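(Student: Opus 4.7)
The plan is to chain together the preceding proposition and the two preceding lemmas in a straightforward way. The proposition already supplies the identity $\mathbb{M} = \tilde{\mathbb{M}} + \underset{B:\mathcal{B}_1 \cup \mathcal{B}_2 \cup \mathcal{B}_3 \cup \mathcal{B}_5}{\bigoplus}\mathbb{M}_B$ as an ordinary sum of submodules, so the only task left is to promote this outer sum to a direct sum.

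First I would invoke the first of the two lemmas above, which shows that $\underset{B:\mathcal{B}_5}{\bigoplus}\mathbb{M}_B$ meets $\tilde{\mathbb{M}} + \underset{B:\mathcal{B}_1 \cup \mathcal{B}_2 \cup \mathcal{B}_3}{\bigoplus}\mathbb{M}_B$ trivially; this splits off the birth-block summand. Next I would apply the second lemma to split the death-type summand $\tilde{\mathbb{M}}$ off from the strict-layer part $\underset{B:\mathcal{B}_1 \cup \mathcal{B}_2 \cup \mathcal{B}_3}{\bigoplus}\mathbb{M}_B$. Combining these two splittings with the proposition produces
\[
\mathbb{M} \;=\; \tilde{\mathbb{M}} \oplus \underset{B:\mathcal{B}_1 \cup \mathcal{B}_2 \cup \mathcal{B}_3}{\bigoplus}\mathbb{M}_B \oplus \underset{B:\mathcal{B}_5}{\bigoplus}\mathbb{M}_B ,
\]
and since the families $\mathcal{B}_1, \mathcal{B}_2, \mathcal{B}_3, \mathcal{B}_5$ are pairwise disjoint, the right-hand side coincides with $\tilde{\mathbb{M}} \oplus \underset{B:\mathcal{B}_1 \cup \mathcal{B}_2 \cup \mathcal{B}_3 \cup \mathcal{B}_5}{\bigoplus}\mathbb{M}_B$, as claimed.

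Because every ingredient is already in hand, the corollary is essentially a bookkeeping consequence with no genuine obstacle. The one subtlety worth flagging is that the notation $\underset{B:\mathcal{B}_1 \cup \mathcal{B}_2 \cup \mathcal{B}_3}{\bigoplus}\mathbb{M}_B$ used in the two preceding lemmas presumes that this sum is already direct; this follows by applying Proposition~\ref{6} separately to each of the three strict layer-block types and then invoking the by-now-familiar escape-point argument (pick some $\boldsymbol{u} \geq \boldsymbol{t}$ that lies outside every block in two of the three layer families but still inside a prescribed block of the third) to see that the three layer summands are mutually in direct sum. With that observation the proof reduces to a single line.
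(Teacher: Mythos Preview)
Your proposal is correct and matches the paper's approach exactly: the corollary is stated there without proof, being an immediate consequence of chaining the preceding proposition with the two lemmas just as you describe. Your added remark about why $\underset{B:\mathcal{B}_1 \cup \mathcal{B}_2 \cup \mathcal{B}_3}{\bigoplus}\mathbb{M}_B$ is already a direct sum is a fair point, and it is indeed covered earlier by Proposition~\ref{6} together with the subsequent proposition showing the five block types are mutually in direct sum.
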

    \vspace{0.3cm}
    Through the above discussion, we have extracted all the block submodules of $\mathbb{M}$ except for dead blocks and prove that they are in direct sum. 
    To prove the main theorem, we only need to prove that submodule $\tilde{\mathbb{M}}$ can also be decomposed as the direct sum of block modules. 
	
    However, we do not directly decompose $\tilde{\mathbb{M}}$ but rather decompose the duality $\tilde{\mathbb{M}}^*$. Let $\tilde{\mathbb{M}}^*$ be the pointwise dual of $\tilde{\mathbb{M}}$, that is $(\tilde{\mathbb{M}}^*)_{\boldsymbol{t}}=Hom(\tilde{\mathbb{M}}_{\boldsymbol{t}},\Bbbk)$. 
    Since the duality is a contravariant functor, $\tilde{\mathbb{M}}^* : ((\mathbb{R}^{op})^3,\geq) \to \textbf{Vec}_{\Bbbk} $ is a persistence module, where $\mathbb{R}^{op}$ denoted the poset $\mathbb{R}$ with the opposite order $\geq$. 
	
    So we need the following result. 
    \vspace{0.3cm}
    \begin{lemma}
		$\tilde{\mathbb{M}}^*$ is \textbf{pfd} and satisfies the 3-parameter strong exactness. 
    \end{lemma}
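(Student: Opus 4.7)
The proof has two parts: the pointwise finite-dimensionality of $\tilde{\mathbb{M}}^{*}$ and its 3-parameter strong exactness. The first is immediate: since $\tilde{\mathbb{M}}_{\boldsymbol{t}}=\text{Im}_{\mathbb{R}^{3},\boldsymbol{t}}^{+}\cap \text{Ker}_{\mathbb{R}^{3},\boldsymbol{t}}^{-}$ is a subspace of the finite-dimensional space $\mathbb{M}_{\boldsymbol{t}}$, its pointwise dual has the same finite dimension, so $\tilde{\mathbb{M}}^{*}$ is \textbf{pfd}.

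The strategy for the exactness is to use that duality on finite-dimensional vector spaces is exact and interchanges surjections with injections and finite limits with colimits. Consequently, 2-parameter strong exactness of an axis-aligned 2D slice of $\tilde{\mathbb{M}}^{*}$ (now over the corresponding slice of $(\mathbb{R}^{op})^{3}$) is equivalent to 2-parameter strong exactness of the corresponding slice of $\tilde{\mathbb{M}}$, because exactness at the middle of a three-term sequence $\tilde{\mathbb{M}}_{A}\to \tilde{\mathbb{M}}_{B}\oplus \tilde{\mathbb{M}}_{C}\to \tilde{\mathbb{M}}_{D}$ is preserved by dualization. Likewise, the cube conditions on $\tilde{\mathbb{M}}^{*}$ translate, via the vertex correspondence $T\leftrightarrow S\setminus T$ between the opposite cube and the original, into the conditions of 3-parameter strong exactness for $\tilde{\mathbb{M}}$: the surjectivity of $\psi$ for $\tilde{\mathbb{M}}^{*}$ dualizes to the injectivity of $\varphi$ for $\tilde{\mathbb{M}}$, and the injectivity of $\varphi$ for $\tilde{\mathbb{M}}^{*}$ dualizes to the surjectivity of $\psi$ for $\tilde{\mathbb{M}}$. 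Thus it suffices to prove that $\tilde{\mathbb{M}}$ is itself 3-parameter strongly exact.

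For this, my plan is to combine the 3-parameter strong exactness of $\mathbb{M}$ with the Im-Ker characterization of $\tilde{\mathbb{M}}$. Applying the Corollary $\rho_{\boldsymbol{s}}^{\boldsymbol{t}}(\text{Im}_{C,\boldsymbol{s}}^{\pm})=\text{Im}_{C,\boldsymbol{t}}^{\pm}$ and $(\rho_{\boldsymbol{s}}^{\boldsymbol{t}})^{-1}(\text{Ker}_{C,\boldsymbol{t}}^{\pm})=\text{Ker}_{C,\boldsymbol{s}}^{\pm}$ with $C=\mathbb{R}^{3}$ immediately shows that every structure map restricts to a surjection $\rho_{\boldsymbol{s}}^{\boldsymbol{t}}|_{\tilde{\mathbb{M}}_{\boldsymbol{s}}}:\tilde{\mathbb{M}}_{\boldsymbol{s}}\to \tilde{\mathbb{M}}_{\boldsymbol{t}}$, since any preimage in $\text{Im}_{\mathbb{R}^{3},\boldsymbol{s}}^{+}$ of an element of $\tilde{\mathbb{M}}_{\boldsymbol{t}}\subseteq \text{Ker}_{\mathbb{R}^{3},\boldsymbol{t}}^{-}$ automatically lies in $\text{Ker}_{\mathbb{R}^{3},\boldsymbol{s}}^{-}$ as well. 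Given compatible data at the upper vertices of a 2D slice or a 3D cube in $\tilde{\mathbb{M}}$, I would first invoke the 3-parameter strong exactness of $\mathbb{M}$ to produce a common antecedent in $\mathbb{M}$, and then use the decomposition formulas of Lemma~\ref{3} for $\text{Im}\,\rho_{\boldsymbol{s}}^{\boldsymbol{t}}$ and $\text{Ker}\,\rho_{\boldsymbol{s}}^{\boldsymbol{t}}$ together with Lemma~\ref{2} to adjust this antecedent by an element of $\text{Ker}\,\rho_{\boldsymbol{s}}^{\boldsymbol{t}}$ so that the corrected antecedent lies inside $\tilde{\mathbb{M}}_{\boldsymbol{s}}$ without disturbing the prescribed upper-vertex values.

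The main obstacle is this correction step: we need a single adjustment that simultaneously keeps us inside $\text{Im}_{\mathbb{R}^{3},\boldsymbol{s}}^{+}$ and $\text{Ker}_{\mathbb{R}^{3},\boldsymbol{s}}^{-}$ while preserving the images at all upper vertices of the cube. I expect to resolve this by decomposing the kernel via Lemma~\ref{3} as $\text{Ker}\,\rho_{\boldsymbol{s}}^{\boldsymbol{t}}=\text{Ker}\,\rho_{\boldsymbol{s}}^{(t_{1},s_{2},s_{3})}+\text{Ker}\,\rho_{\boldsymbol{s}}^{(s_{1},t_{2},s_{3})}+\text{Ker}\,\rho_{\boldsymbol{s}}^{(s_{1},s_{2},t_{3})}$, noting that each summand is already contained in $\text{Ker}_{\mathbb{R}^{3},\boldsymbol{s}}^{-}$, and then constructing the three componentwise adjustments separately from the partial two-dimensional data along each coordinate direction before reassembling them using the $\rho$-compatibility in Lemma~\ref{2}. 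This keeps the overall adjustment within the defining constraints of $\tilde{\mathbb{M}}$, yielding the required antecedent and completing the verification of the exactness conditions.
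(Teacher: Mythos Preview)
Your overall strategy---prove that $\tilde{\mathbb{M}}$ itself is 3-parameter strongly exact and then dualize---coincides with the paper's, and your duality bookkeeping (swapping $\psi\leftrightarrow\varphi$ under the vertex involution $T\mapsto S\setminus T$) is correct. The observation that each $\rho_{\boldsymbol{s}}^{\boldsymbol{t}}|_{\tilde{\mathbb{M}}_{\boldsymbol{s}}}$ is surjective is also used in the paper.

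Where you diverge is in the ``correction'' step, and this is where your plan is both unnecessary and likely to stall. The paper does \emph{not} adjust the $\mathbb{M}$-antecedent $\alpha$: it shows that $\alpha$ already lies in $\tilde{\mathbb{M}}_{\boldsymbol{s}}$. The $\text{Ker}_{\mathbb{R}^3,\boldsymbol{s}}^{-}$ membership is automatic from $(\rho_{\boldsymbol{s}}^{\boldsymbol{t}})^{-1}(\text{Ker}^{-})=\text{Ker}^{-}$, as you note. For $\text{Im}_{\mathbb{R}^3,\boldsymbol{s}}^{+}$, the paper exploits that each upper datum $\alpha_i$ is already in $\text{Im}^{+}$, hence has a preimage arbitrarily far back along one coordinate; pairing that preimage with $\alpha$ in a suitable $2$-square and applying $2$-parameter strong exactness of $\mathbb{M}$ yields a preimage of $\alpha$ along that coordinate, giving $\alpha\in\text{Im}_{c_i,\boldsymbol{s}}^{+}$ directly. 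Your proposed adjustment by an element of $\text{Ker}\,\rho_{\boldsymbol{s}}^{\boldsymbol{t}}$ would have to lie in the \emph{intersection} of the three axis-direction kernels in order not to disturb the prescribed values $\alpha_1,\alpha_2,\alpha_3$, and you give no mechanism for producing such an element that simultaneously pushes $\alpha$ into $\text{Im}^{+}$; since in fact $\alpha$ is already there, any nonzero correction of this kind is spurious.

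Finally, your outline treats only the ``find a common antecedent'' direction and does not separately address injectivity of $\varphi$ for $\tilde{\mathbb{M}}$. The paper handles this independently: using the surjectivity of all internal maps in $\tilde{\mathbb{M}}$, every class in $\underset{T\in\mathcal{P}_1(S)}{\mathrm{colim}}\,\tilde{\mathbb{M}}$ is represented at a single vertex, and then the already-established $2$-parameter strong exactness of the adjacent face of $\tilde{\mathbb{M}}$ kills it. You should make this step explicit rather than folding it into the antecedent-finding argument.
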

    \begin{proof}
		Our proof is mainly divided into two parts. The first part is to prove that for any $r\in \mathbb{R}$, $\tilde{\mathbb{M}}_{\{r\}\times \mathbb{R} \times \mathbb{R}}$ satisfies 2-parameter strong exactness, and the proof method for $\tilde{\mathbb{M}}|_{\mathbb{R}\times \{r\}\times \mathbb{R}}$ and $\tilde{\mathbb{M}}|_{\mathbb{R}\times \mathbb{R} \times \{r\}}$ are similar. 
		The second part is to prove that for any $(s_1,s_2,s_3)\leq (t_1,t_2,t_3) \in \mathbb{R}^3$, the morphism $\varphi$ and $\psi$ associated with the persistence module $\tilde{\mathbb{M}}$ are injective and surjective respectively. 
		
		Obviously, $\tilde{\mathbb{M}}$ is \textbf{pfd}, then $\tilde{\mathbb{M}}^*$ is \textbf{pfd}. 
		$$
		\xymatrix{
			N_{(r,s_2,t_3)} \ar[r] & N_{(r,t_2,t_3)}\\
			N_{(r,s_2,s_3)} \ar[r] \ar[u] & N_{(r,t_2,s_3)} \ar[u]\\
		}
		$$
		
		Firstly, let $(r,s_2,s_3) \leq (r,t_2,t_3) \in \mathbb{R}^3$ and take an element $\delta \in \tilde{\mathbb{M}}_{(r,t_2,t_3)}$ that has preimages $\beta \in \tilde{\mathbb{M}}_{(r,t_2,s_3)}$ and $\gamma \in \tilde{\mathbb{M}}_{(r,s_2,t_3)}$. 
		Then, by the 3-parameter strong exactness of $\mathbb{M}$, $\beta$ and $\gamma$ have a shared preimage $\alpha \in \mathbb{M}_{(r,s_2,s_3)}$. 
		Indeed, we can prove that $\alpha \in \tilde{\mathbb{M}}_{(r,s_2,s_3)}$. 
		Obviously, we know that $\alpha \in (\rho_{(r,s_2,s_3)}^{(r,t_2,t_3)})^{-1} \tilde{\mathbb{M}}_{(r,t_2,t_3)} \subseteq (\rho_{(r,s_2,s_3)}^{(r,t_2,t_3)})^{-1}(\text{Ker}_{\mathbb{R}^3,(r,t_2,t_3)}^{-})=\text{Ker}_{\mathbb{R}^3,(r,s_2,s_3)}^{-}$. 
		What's more, because of $\beta \in \tilde{\mathbb{M}}_{(r,t_2,s_3)} \subseteq \text{Im}_{\mathbb{R}^3,(r,t_2,s_3)}^{+}$, for any $\boldsymbol{u} < (r,t_2,s_3) \in \mathbb{R}^3$ with $u_1=r$ and $u_2 = t_2$ there is some preimage $\beta_{\boldsymbol{u}}$ of $\beta$ in $\mathbb{M}_{(r,t_2,u_3)}$ by the 3-parameter strong exactness, implies that there exists a shared preimage $\alpha_{\boldsymbol{u}}$ of $\alpha$ and $\beta_{\boldsymbol{u}}$ in $\mathbb{M}_{\boldsymbol{u}}$. 
		Thus $\alpha \in \text{Im}_{c_3,(r,s_2,s_3)}^{+}$, where $c_3$ is the trivial cut that is ${c_3}^-=\emptyset$. 
		Similarly, we can know that $\alpha \in \text{Im}_{c_1,(r,s_2,s_3)}^{+}$ and $\alpha \in \text{Im}_{c_2,(r,s_2,s_3)}^{+}$, in which ${c_1}^-={c_2}^-=\emptyset$. 
		So $\alpha \in \text{Im}_{\mathbb{R}^3,(r,s_2,s_3)}^{+}$, and therefore $\alpha\in \tilde{\mathbb{M}}_{(r,s_2,s_3)}$. 
		
		In other words, $\tilde{\mathbb{M}}|_{\{r\}\times \mathbb{R}\times \mathbb{R}}$ satisfies the 2-parameter strong exactness. 
		Thus, $\tilde{\mathbb{M}}^*|_{\{r\}\times \mathbb{R}^{op} \times \mathbb{R}^{op}}$ satisfies the 2-parameter strong exactness\cite{cochoy2020decomposition}. 
		
		Secondly, for any $(s_1,s_2,s_3) \leq (t_1,t_2,t_3) \in \mathbb{R}^3$, we get a commutative diagram
		$$
		\xymatrix{
			&\tilde{\mathbb{M}}_{(s_1,t_2,t_3)}\ar[rr]&&\tilde{\mathbb{M}}_{(t_1,t_2,t_3)}\\
			\tilde{\mathbb{M}}_{(s_1,s_2,t_3)}\ar[ru]\ar[rr]&&\tilde{\mathbb{M}}_{(t_1,s_2,t_3)}\ar[ru]&\\
			&\tilde{\mathbb{M}}_{(s_1,t_2,s_3)}\ar[uu]|\hole \ar[rr]|\hole &&\tilde{\mathbb{M}}_{(t_1,t_2,s_3)}\ar[uu]\\
			\tilde{\mathbb{M}}_{(s_1,s_2,s_3)}\ar[rr]\ar[uu]\ar[ru]&&\tilde{\mathbb{M}}_{(t_1,s_2,s_3)}\ar[uu]\ar[ru]&
		}
		$$
		We will denote it as $\mathcal{X}:\mathcal{P}(S) \to \textbf{Vec}_{\Bbbk}$ that $S$ is a set with $|S|=3$, and get the morphism $\varphi:\underset{T\in \mathcal{P}_1(S)}{\text{colim}}\mathcal{X}(T) \to \mathcal{X}(S)$ and morphism $\psi: \mathcal{X}(\emptyset) \to \underset{T\in \mathcal{P}_0(S)}{\text{lim}}\mathcal{X}(T)$. 
		
		Note that for any $\boldsymbol{s} \leq \boldsymbol{t} \in \mathbb{R}^3$, we have $\tilde{\mathbb{M}}_{\boldsymbol{t}}=\text{Im}_{\mathbb{R}^3,\boldsymbol{t}}^{+} \cap \text{Ker}_{\mathbb{R}^3,\boldsymbol{t}}^{-}$, $\rho_{\boldsymbol{s}}^{\boldsymbol{t}}(\text{Im}_{\mathbb{R},\boldsymbol{s}}^{+})=\text{Im}_{\mathbb{R},\boldsymbol{t}}^{+}$ and $(\rho_{\boldsymbol{s}}^{\boldsymbol{t}})^{-1}(\text{Ker}_{\mathbb{R}^3,\boldsymbol{t}}^{-})=\text{Ker}_{\mathbb{R}^3,\boldsymbol{s}}^{-}$. 
		Thus for any $\alpha \in \tilde{\mathbb{M}}_{\boldsymbol{t}}$, we always can find out some $\beta \in \tilde{\mathbb{M}}_{\boldsymbol{s}}$ such that $\rho_{\boldsymbol{s}}^{\boldsymbol{t}}(\beta)=\alpha$. 
		Given $\underset{T\in \mathcal{P}_1(S)}{\text{colim}}\mathcal{X}(T)=\tilde{\mathbb{M}}_{(s_1,t_2,t_3)}\oplus \tilde{\mathbb{M}}_{(t_1,s_2,t_3)} \oplus \tilde{\mathbb{M}}_{(t_1,t_2,s_3)} / \sim $. 
		Then for any $[\alpha+\beta+\gamma] \in \underset{T\in \mathcal{P}_1(S)}{\text{colim}}\mathcal{X}(T)$ satisfying $\varphi([\alpha+\beta+\gamma])=0$ in which $\alpha \in \tilde{\mathbb{M}}_{(s_1,t_2,t_3)}$, $\beta \in \tilde{\mathbb{M}}_{(t_1,s_2,t_3)}$ and $\gamma \in \tilde{\mathbb{M}}_{(t_1,t_2,s_3)}$, we may find out some $\tilde{\gamma} \in \tilde{\mathbb{M}}_{(t_1,t_2,s_3)}$ such that $[\alpha+\beta+\gamma]=[\tilde{\gamma}]$. 
		Since $\varphi([\tilde{\gamma}])=0$, then $\rho_{(t_1,t_2,s_3)}^{(t_1,t_2,t_3)}(\tilde{\gamma})=0$. 
		Therefore, we can find out some common preimage of $\tilde{\gamma}$ and $0 \in \tilde{\mathbb{M}}_{(t_1,s_2,t_3)}$, then $[\alpha+\beta+\gamma]=[\tilde{\gamma}]=0 \in \underset{T\in \mathcal{P}_1(S)}{\text{colim}}\mathcal{X}(T).$ 
		Thus, $\varphi$ is injective. 
		Obviously, $\varphi^*$ is surjective. 

        To proving that $\psi$ is surjective, suppose $\alpha_1 \in \tilde{\mathbb{M}}_{(t_1,s_2,s_3)},\alpha_2 \in \tilde{\mathbb{M}}_{(s_1,t_2,s_3)},\alpha_3 \in \tilde{\mathbb{M}}_{(s_1,s_2,t_3)}$. Because of the 3-parameter strong exactness of $\mathbb{M}$, we may find out $\alpha \in \mathbb{M}_{(s_1,s_2,s_3)}$ such that $\alpha$ is the common preimage of $\alpha_1,\alpha_2,\alpha_3$. 
        Given that  
        \begin{equation*}
            \begin{aligned}
                &\alpha_1\in \tilde{\mathbb{M}}_{(t_1,s_2,s_3)}=\text{Im}_{\mathbb{R}^3,(t_1,s_2,s_3)}^{+} \cap \text{Ker}_{\mathbb{R}^3,(t_1,s_2,s_3)}^{-}, \\
                &{\rho_{(s_1,s_2,s_3)}^{(t_1,s_2,s_3)}}^{-1} \text{Ker}_{\mathbb{R}^3,(t_1,s_2,s_3)}^{-} = \text{Ker}_{\mathbb{R}^3,(s_1,s_2,s_3)}^{-}, \\
                &\text{Im}_{\mathbb{R}^3,(t_1,s_2,s_3)}^{+} = \text{Im}_{c_1,(t_1,s_2,s_3)}^{+} \cap \text{Im}_{c_2,(t_1,s_2,s_3)}^{+} \cap \text{Im}_{c_3,(t_1,s_2,s_3)}^{+}. \\
            \end{aligned}
        \end{equation*}
        Because of Lemma\ref{1}, we may prove that ${\rho_{(s_1,s_2,s_3)}^{(t_1,s_2,s_3)}}^{-1} \text{Im}_{c_2,(t_1,s_2,s_3)}^{+} = \text{Im}_{c_2,(s_1,s_2,s_3)}^{+}$. 
        Lemma\ref{1} told us that there exists some $y \leq s_2$ such that
        \begin{equation*}
            \begin{aligned}
                \text{Im}_{c_2,(t_1,s_2,s_3)}^{+}=\rho_{(t_1,y,s_3)}^{(t_1,s_2,s_3)} \mathbb{M}_{(t_1,y,s_3)},\\
                \text{Im}_{c_2,(s_1,s_2,s_3)}^{+}=\rho_{(s_1,y,s_3)}^{(s_1,s_2,s_3)} \mathbb{M}_{(s_1,y,s_3)}.
            \end{aligned}
        \end{equation*}
        For any $\beta \in {\rho_{(s_1,s_2,s_3)}^{(t_1,s_2,s_3)}}^{-1} \text{Im}_{c_2,(t_1,s_2,s_3)}^{+}$, we may find out $\gamma \in \mathbb{M}_{(t_1,y,s_3)}$ such that $\rho_{(t_1,y,s_3)}^{(s_1,s_2,s_3)} (\gamma) = \rho_{(s_1,s_2,s_3)}^{(t_1,s_2,s_3)}(\beta)$. By the 2-parameter strong exactness, there is a preimage $\delta \in \mathbb{M}_{(s_1,y,s_3)}$ of $\beta$ and $\gamma$, then $\beta \in \rho_{(s_1,y,s_3)}^{(s_1,s_2,s_3)} M_{(s_1,y,s_3)}=\text{Im}_{c_2,(s_1,s_2,s_3)}^{+}$. So we have proven ${\rho_{(s_1,s_2,s_3)}^{(t_1,s_2,s_3)}}^{-1} \text{Im}_{c_2,(t_1,s_2,s_3)}^{+} = \text{Im}_{c_2,(s_1,s_2,s_3)}^{+}$. 
        Similarly, we can prove that ${\rho_{(s_1,s_2,s_3)}^{(t_1,s_2,s_3)}}^{-1} \text{Im}_{c_3,(t_1,s_2,s_3)}^{+} = \text{Im}_{c_3,(s_1,s_2,s_3)}^{+}$, then $\alpha \in \text{Im}_{c_2,(s_1,s_2,s_3)}^{+} \cap \text{Im}_{c_3,(s_1,s_2,s_3)}^{+}$. 
        In the same way, by considering $\alpha$ as a preimage of $\alpha_2$ and $\alpha_3$ respectively, we can prove that $\alpha \in \text{Im}_{c_1,(s_1,s_2,s_3)}^{+} \cap \text{Im}_{c_3,(s_1,s_2,s_3)}^{+}$ and $\alpha \in \text{Im}_{c_1,(s_1,s_2,s_3)}^{+} \cap \text{Im}_{c_2,(s_1,s_2,s_3)}^{+}$. 
        We have proven that $\alpha \in \text{Im}_{c_1,(s_1,s_2,s_3)}^{+} \cap \text{Im}_{c_2,(s_1,s_2,s_3)}^{+} \cap \text{Im}_{c_3,(s_1,s_2,s_3)}^{+} = \text{Im}_{\mathbb{R}^3,(s_1,s_2,s_3)}^+$. 
        Thus 
        $\alpha \in \text{Im}_{\mathbb{R}^3,\boldsymbol{s}}^+ \cap \text{Ker}_{\mathbb{R}^3,\boldsymbol{s}}^-=\tilde{\mathbb{M}}_{\boldsymbol{s}}$, and the morphism $\psi$ is surjective. 
        Obviously, the duality of $\psi$, $\psi^*$, is injective.
        
		So $\tilde{\mathbb{M}}^*$ satisfies the 3-parameter strong exactness. 
		
    \end{proof}
    \vspace{0.3cm}
    By the above lemma, we know that $\tilde{\mathbb{M}}^*$ can also be decomposed like the above decomposition of $\mathbb{M}$. 
    \vspace{0.3cm}
    \begin{lemma}
		For any $\boldsymbol{t}\in (\mathbb{R}^{op})^3$, $\text{Im}_{(\mathbb{R}^{op})^3,\boldsymbol{t}}^{+}(\tilde{\mathbb{M}}^*)=0$. 
    \end{lemma}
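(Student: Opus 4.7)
The plan is to translate the claim into a statement about kernels in $\tilde{\mathbb{M}}$ via linear duality, and then exploit pointwise finite-dimensionality of $\mathbb{M}$. First I would unfold the definition: in $(\mathbb{R}^{op})^3$ the relation $\boldsymbol{s}\leq_{op}\boldsymbol{t}$ is exactly $\boldsymbol{s}\geq \boldsymbol{t}$ in the original order, and the transition map of $\tilde{\mathbb{M}}^*$ from $\boldsymbol{s}$ to $\boldsymbol{t}$ is the linear dual $(\rho_{\boldsymbol{t}}^{\boldsymbol{s}})^*:\tilde{\mathbb{M}}_{\boldsymbol{s}}^*\to\tilde{\mathbb{M}}_{\boldsymbol{t}}^*$, so
\[
\text{Im}_{(\mathbb{R}^{op})^3,\boldsymbol{t}}^{+}(\tilde{\mathbb{M}}^*)=\bigcap_{\boldsymbol{s}\geq \boldsymbol{t}}\text{Im}\bigl((\rho_{\boldsymbol{t}}^{\boldsymbol{s}})^*\bigr).
\]
Using the finite-dimensional identity $\text{Im}(f^*)=\text{Ann}(\text{Ker}\,f)$, which applies because $\tilde{\mathbb{M}}_{\boldsymbol{t}}$ is finite-dimensional, this rewrites as
\[
\text{Ann}_{\tilde{\mathbb{M}}_{\boldsymbol{t}}^*}\Bigl(\sum_{\boldsymbol{s}\geq \boldsymbol{t}}\text{Ker}_{\tilde{\mathbb{M}}}(\rho_{\boldsymbol{t}}^{\boldsymbol{s}})\Bigr),
\]
so it suffices to show that the inner sum equals all of $\tilde{\mathbb{M}}_{\boldsymbol{t}}$.

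Next I would prove the stronger fact that this sum is already realized by a single kernel. By the definition of $\tilde{\mathbb{M}}$,
\[
\tilde{\mathbb{M}}_{\boldsymbol{t}}=\text{Im}_{\mathbb{R}^3,\boldsymbol{t}}^{+}\cap\text{Ker}_{\mathbb{R}^3,\boldsymbol{t}}^{-}\subseteq\text{Ker}_{\mathbb{R}^3,\boldsymbol{t}}^{-}=\sum_{\boldsymbol{u}\geq\boldsymbol{t}}\text{Ker}_{\mathbb{M}}(\rho_{\boldsymbol{t}}^{\boldsymbol{u}}),
\]
where the last sum is computed in $\mathbb{M}$. The family $\{\text{Ker}_{\mathbb{M}}(\rho_{\boldsymbol{t}}^{\boldsymbol{u}})\}_{\boldsymbol{u}\geq \boldsymbol{t}}$ is directed under inclusion: given $\boldsymbol{u}_1,\boldsymbol{u}_2\geq \boldsymbol{t}$ their componentwise maximum $\boldsymbol{u}$ dominates both, and $\text{Ker}(\rho_{\boldsymbol{t}}^{\boldsymbol{u}_i})\subseteq\text{Ker}(\rho_{\boldsymbol{t}}^{\boldsymbol{u}})$ because $\rho_{\boldsymbol{t}}^{\boldsymbol{u}}$ factors through $\rho_{\boldsymbol{t}}^{\boldsymbol{u}_i}$. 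A directed union of subspaces of the finite-dimensional space $\mathbb{M}_{\boldsymbol{t}}$ must stabilize, so there exists a finite $\boldsymbol{u}^*\geq \boldsymbol{t}$ with $\text{Ker}_{\mathbb{R}^3,\boldsymbol{t}}^{-}=\text{Ker}_{\mathbb{M}}(\rho_{\boldsymbol{t}}^{\boldsymbol{u}^*})$.

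Combining the two steps, $\tilde{\mathbb{M}}_{\boldsymbol{t}}\subseteq\text{Ker}_{\mathbb{M}}(\rho_{\boldsymbol{t}}^{\boldsymbol{u}^*})$, which forces the restriction of $\rho_{\boldsymbol{t}}^{\boldsymbol{u}^*}$ to $\tilde{\mathbb{M}}_{\boldsymbol{t}}$ to be identically zero, giving $\text{Ker}_{\tilde{\mathbb{M}}}(\rho_{\boldsymbol{t}}^{\boldsymbol{u}^*})=\tilde{\mathbb{M}}_{\boldsymbol{t}}$. This single summand already exhausts the sum, so $\sum_{\boldsymbol{s}\geq \boldsymbol{t}}\text{Ker}_{\tilde{\mathbb{M}}}(\rho_{\boldsymbol{t}}^{\boldsymbol{s}})=\tilde{\mathbb{M}}_{\boldsymbol{t}}$ and its annihilator vanishes, as required.

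The only conceptual pitfall I anticipate is the risk of conflating kernels in $\tilde{\mathbb{M}}$ with those in $\mathbb{M}$, since the two can differ for a non-saturated submodule; the argument avoids this because $\text{Ker}_{\tilde{\mathbb{M}}}(\rho_{\boldsymbol{t}}^{\boldsymbol{u}^*})=\tilde{\mathbb{M}}_{\boldsymbol{t}}\cap\text{Ker}_{\mathbb{M}}(\rho_{\boldsymbol{t}}^{\boldsymbol{u}^*})$ by definition, and the first factor is already contained in the second. Beyond this bookkeeping, no tool heavier than the pointwise finite-dimensionality of $\mathbb{M}$ and the very definition of $\tilde{\mathbb{M}}$ appears to be needed; the statement is essentially a direct consequence of the fact that $\tilde{\mathbb{M}}_{\boldsymbol{t}}$ sits inside $\text{Ker}^{-}_{\mathbb{R}^3,\boldsymbol{t}}$.
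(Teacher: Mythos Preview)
Your proof is correct and follows essentially the same approach as the paper: both translate the image in $\tilde{\mathbb{M}}^*$ into the annihilator of a kernel-type subspace of $\tilde{\mathbb{M}}_{\boldsymbol{t}}$, and then observe that $\tilde{\mathbb{M}}_{\boldsymbol{t}}\subseteq\text{Ker}_{\mathbb{R}^3,\boldsymbol{t}}^{-}(\mathbb{M})$ forces this subspace to be all of $\tilde{\mathbb{M}}_{\boldsymbol{t}}$. The only cosmetic difference is that the paper passes through the axis-wise decomposition $\text{Ker}_{\mathbb{R}^3,\boldsymbol{t}}^{-}=\text{Ker}_{c^1,\boldsymbol{t}}^{-}+\text{Ker}_{c^2,\boldsymbol{t}}^{-}+\text{Ker}_{c^3,\boldsymbol{t}}^{-}$ and asserts $\text{Ker}_{\mathbb{R}^3,\boldsymbol{t}}^{-}(\tilde{\mathbb{M}})=\tilde{\mathbb{M}}_{\boldsymbol{t}}$ directly, whereas you work with the full intersection $\bigcap_{\boldsymbol{s}\geq\boldsymbol{t}}$ and use a directedness/stabilization argument to produce a single $\boldsymbol{u}^*$; your version has the small advantage of making the passage from kernels in $\mathbb{M}$ to kernels in $\tilde{\mathbb{M}}$ explicit.
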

    \begin{proof}
		Let $X^{\perp}$ denote the annihilator of any subspace $X\subseteq \tilde{\mathbb{M}}_{\boldsymbol{t}}$:
		$$X^{\perp}=\{\phi(\alpha)=0 \text{ for all } \alpha \in X \}.$$ 
		Because the annihilator operation transforms sums into intersections and kernels into images, 
		then
		\begin{equation*}
		  \begin{aligned}
				(\text{Ker}_{\mathbb{R}^3,\boldsymbol{t}}^{-}(\tilde{\mathbb{M}}))^{\perp}&=(\text{Ker}_{c^1,\boldsymbol{t}}^{-}(\tilde{\mathbb{M}})+\text{Ker}_{c^2,\boldsymbol{t}}^{-}(\tilde{\mathbb{M}})+\text{Ker}_{c^3,\boldsymbol{t}}^{-}(\tilde{\mathbb{M}}))^{\perp}\\
				&=\text{Im}_{c^1,\boldsymbol{t}}^{+}(\tilde{\mathbb{M}}^*) \cap \text{Im}_{c^2,\boldsymbol{t}}^{+}(\tilde{\mathbb{M}}^*) \cap \text{Im}_{c^3,\boldsymbol{t}}^{+}(\tilde{\mathbb{M}}^*)=\text{Im}_{(\mathbb{R}^{op})^3,\boldsymbol{t}}^{+}(\tilde{\mathbb{M}}^*)
		  \end{aligned}
		\end{equation*}
		Note that $\tilde{\mathbb{M}}_{\boldsymbol{t}}=\text{Im}_{\mathbb{R}^3,\boldsymbol{t}}^{+}(\mathbb{M}) \cap \text{Ker}_{\mathbb{R}^3,\boldsymbol{t}}^{-}(\mathbb{M})$, so $\text{Ker}_{\mathbb{R}^3,\boldsymbol{t}}^{-}(\tilde{\mathbb{M}})=\tilde{\mathbb{M}}_{\boldsymbol{t}}$. 
		Thus $\text{Im}_{(\mathbb{R}^{op})^3,\boldsymbol{t}}^{+}(\tilde{\mathbb{M}}^*)=(\tilde{\mathbb{M}}_{\boldsymbol{t}})^{\perp}=0$. 
    \end{proof}
    \vspace{0.3cm}
    Based on the previous results, we know that the module $\tilde{\mathbb{M}}^*$ can be decomposed into the direct sum of block modules, which are the type of $\mathcal{B}_1,\mathcal{B}_2,\mathcal{B}_3,\mathcal{B}_5$. Then the submodule $\tilde{\mathbb{M}}$ can be decomposed into the direct sum of block modules, which are the type of $\mathcal{B}_1,\mathcal{B}_2,\mathcal{B}_3,\mathcal{B}_4$. 
	
    Thus, we have proved our main theorem. 
    \vspace{0.3cm}
    \begin{theorem}
		$\mathbb{M} \cong \underset{B:\text{block}}{\bigoplus}\mathbb{M}_B$ in which $M_B\cong \bigoplus_{i=1}^{n_B}\Bbbk_{B}$ in which $n_B$ are determined by the counting functor $\mathcal{CF}$, i.e. Corollary\ref{8}. 
    \end{theorem}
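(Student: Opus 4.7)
The plan is to assemble the decomposition from the results already proved in the preceding subsections and then handle the residual piece $\tilde{\mathbb{M}}$ by passing to the dual module.

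First I would invoke the corollary stating
\[ \mathbb{M} \cong \tilde{\mathbb{M}} \oplus \underset{B:\mathcal{B}_1\cup \mathcal{B}_2 \cup \mathcal{B}_3 \cup \mathcal{B}_5}{\bigoplus}\mathbb{M}_B, \]
which already provides a direct-sum decomposition of $\mathbb{M}$ into block submodules of strict-layer and birth types together with the residual submodule $\tilde{\mathbb{M}}$ supported in $\mathrm{Im}^+ \cap \mathrm{Ker}^-$. What remains is to decompose $\tilde{\mathbb{M}}$ as a direct sum of strict death-block modules.

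Next I would pass to the pointwise dual $\tilde{\mathbb{M}}^* : ((\mathbb{R}^{op})^3,\geq) \to \textbf{Vec}_{\Bbbk}$. By the preceding lemma, $\tilde{\mathbb{M}}^*$ is pfd and satisfies 3-parameter strong exactness on the opposite poset, so the entire decomposition machinery developed so far applies verbatim on $(\mathbb{R}^{op})^3$. The additional key input is the lemma that $\mathrm{Im}^+_{(\mathbb{R}^{op})^3,\boldsymbol{t}}(\tilde{\mathbb{M}}^*)=0$ for every $\boldsymbol{t}$: this simultaneously forces the residual part $\widetilde{\tilde{\mathbb{M}}^*}$ to vanish and rules out summands whose block type in the opposite poset would carry a non-zero $\mathrm{Im}^+$. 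Consequently $\tilde{\mathbb{M}}^*$ decomposes as a direct sum of strict-layer block modules (which are self-dual under order reversal) together with blocks that, read back in the original poset, are precisely the strict death blocks of $\mathbb{M}$.

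Dualizing back therefore yields $\tilde{\mathbb{M}} \cong \bigoplus_{B \in \mathcal{B}_4} \mathbb{M}_B$, and combining this with the earlier decomposition gives $\mathbb{M} \cong \bigoplus_{B:\text{block}} \mathbb{M}_B$. Finally, Corollary~\ref{8} identifies each summand as $\mathbb{M}_B \cong \bigoplus_{i=1}^{n_B}\Bbbk_B$ with $n_B = \dim \mathcal{CF}_B(\mathbb{M})$; because $\mathcal{CF}$ is an additive functor, these multiplicities depend only on the isomorphism class of $\mathbb{M}$, giving uniqueness up to reordering.

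The main obstacle I anticipate is verifying that every construction survives the passage to the opposite poset: under order reversal the roles of $\mathrm{Im}^\pm$ and $\mathrm{Ker}^\pm$ swap, and one must check that the section-disjointness and covering lemmas, the Mittag-Leffler applications, the definition of the counting functor, and the choice of complements $M_B^0$ inside the relevant $K_B^+$ all remain symmetric. Once this symmetry is cleanly established and the vanishing $\mathrm{Im}^+(\tilde{\mathbb{M}}^*)=0$ is used to eliminate the unwanted summand types, the final assembly of the decomposition becomes essentially a bookkeeping exercise.
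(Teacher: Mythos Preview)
Your proposal follows essentially the same route as the paper: invoke the corollary $\mathbb{M}\cong\tilde{\mathbb{M}}\oplus\bigoplus_{\mathcal{B}_1\cup\mathcal{B}_2\cup\mathcal{B}_3\cup\mathcal{B}_5}\mathbb{M}_B$, pass to the pointwise dual $\tilde{\mathbb{M}}^*$, use that it is pfd and 3-parameter strongly exact, apply the partial decomposition machinery again on $(\mathbb{R}^{op})^3$, and use $\mathrm{Im}^+_{(\mathbb{R}^{op})^3,\boldsymbol{t}}(\tilde{\mathbb{M}}^*)=0$ to kill the new residual; then dualize back and assemble.

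One small point where your write-up drifts from the paper: the vanishing of $\mathrm{Im}^+_{(\mathbb{R}^{op})^3}$ only forces $\widetilde{\tilde{\mathbb{M}}^*}=0$; it does not by itself exclude strict-layer summands, since $\mathrm{Im}^+_{(\mathbb{R}^{op})^3,\boldsymbol{t}}(\Bbbk_B)=0$ for every proper block $B$. Accordingly the paper states that $\tilde{\mathbb{M}}$ decomposes into blocks of types $\mathcal{B}_1,\mathcal{B}_2,\mathcal{B}_3,\mathcal{B}_4$, not only $\mathcal{B}_4$ as you write. The layer contributions do turn out to be zero (by additivity of $\mathcal{CF}$, since the full layer multiplicity was already accounted for in the first decomposition), but your sentence ``rules out summands whose block type \ldots\ would carry a non-zero $\mathrm{Im}^+$'' overstates what that lemma delivers. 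This is a cosmetic gap rather than a structural one; the assembly you outline is exactly the paper's.
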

    \vspace{0.3cm}

\bibliographystyle{plain}
\bibliography{reference.bib}
\end{document}